\renewcommand*\showkeyslabelformat[1]{%
\fbox{\parbox[t]{1.4 cm}{\raggedright\normalfont\small\url{#1}}}}
\definecolor{labelkey}{rgb}{.1,.1,.8}
\definecolor{refkey}{rgb}{0,0.6,0.0}
\definecolor{dgreen}{rgb}{0.00,0.49,0.00}
\definecolor{dblue}{rgb}{0,0.08,0.75}
\colorlet{myblue}{dblue}
\colorlet{mygreen}{dgreen}
\definecolor{myfirstblue}{rgb}{.8, .8, 1}
\newcommand*\mybluebox[1]{%
    \colorbox{RoyalBlue!20}{\hspace{1em}#1\hspace{1em}}}
\crefname{equation}{}{}
\crefname{item}{}{items}
\crefname{figure}{Figure}{Figures}
\crefname{theorem}{\protect\theoremname}{\protect\theoremname}
\crefname{lemma}{\protect\lemmaname}{\protect\lemmaname}
\crefname{proposition}{\protect\propositionname}{\protect\propositionname}
\crefname{corollary}{\protect\corollaryname}{\protect\corollaryname}
\crefname{fact}{\protect\factname}{\protect\factname}
\crefname{example}{\protect\examplename}{\protect\examplename}
\crefname{remark}{\protect\remarkname}{\protect\remarkname}
\crefname{enumi}{}{}
\crefname{appsec}{Appendix}{Appendices}
\let\orgdescriptionlabel\descriptionlabel
\renewcommand*{\descriptionlabel}[1]{%
	\let\orglabel\label
	\let\label\@gobble
	\phantomsection
	\edef\@currentlabel{#1}%
	\let\label\orglabel
	\orgdescriptionlabel{#1}%
}
\let\leq\leqslant
\let\geq\geqslant
\let\subset\subseteq
\renewcommand{\implies}{\Rightarrow}
\renewcommand{\iff}{\Leftrightarrow}
\renewcommand{\impliedby}{\Leftarrow}
\def\th@plain{%
	\thm@notefont{} 
	\itshape 
}
\def\th@definition{%
	\thm@notefont{}
	\normalfont 
}
\g@addto@macro\th@remark{\thm@headpunct{}}
\g@addto@macro\th@definition{\thm@headpunct{}}
\g@addto@macro\th@plain{\thm@headpunct{}}
\theoremstyle{plain}
\newtheorem{theorem}{\protect\theoremname}[section]
\newtheorem{corollary}[theorem]{\protect\corollaryname}
\newtheorem{lemma}[theorem]{\protect\lemmaname}
\newtheorem{proposition}[theorem]{\protect\propositionname}
\theoremstyle{definition}
\newtheorem{remark}[theorem]{\protect\remarkname}
\newtheorem{example}[theorem]{\protect\examplename}
\newtheorem{fact}[theorem]{\protect\factname}
\newtheorem{openprob}[theorem]{\protect\openprobname}
\providecommand{\theoremname}{Theorem}
\providecommand{\propositionname}{Proposition}
\providecommand{\corollaryname}{Corollary}
\providecommand{\factname}{Fact}
\providecommand{\lemmaname}{Lemma}
\providecommand{\remarkname}{Remark}
\providecommand{\examplename}{Example}
\providecommand{\openprobname}{Open Problem}
\newcommand{\ball}[2]{\ensuremath{  \operatorname{B} \left({#1};{#2}\right) } }
\newcommand{\pc}[1]{{#1}^{\ominus}} 
\DeclarePairedDelimiterX\menge[2]{ \{ }{ \} }{ {#1} ~ \delimsize \vert ~ \mathopen{} {#2} }  
\DeclarePairedDelimiterX\fa[2]{ ( }{ )_{#2} }{#1}  
\DeclarePairedDelimiterX\set[2]{ \{ }{ \}_{#2} }{#1}  
\DeclarePairedDelimiterX\rb[1]{ ( }{ ) }{#1}
\DeclarePairedDelimiterX\rbr[1]{ ( }{ ) }{#1}
\DeclarePairedDelimiterX\sqbo[1]{ ] }{ [ }{#1}
\DeclarePairedDelimiterX\sqbc[1]{ [ }{ ] }{#1}
\DeclarePairedDelimiterX\scal[2]{\langle}{\rangle}%
{ \ifblank{#1#2}{ \, \cdot \, \delimsize \vert \, \mathopen{}\cdot \, } {   {#1} \, \delimsize \vert \, \mathopen{}{#2}  } }
\DeclarePairedDelimiterX\norm[1]{\lVert}{\rVert}%
{ \ifblank{#1}{\, \cdot \,}{#1} }
\DeclarePairedDelimiterX\abs[1]{\lvert}{\rvert}{\ifblank{#1}{\, \cdot \,}{#1}}
\newcommand{\HH}{\ensuremath{\mathcal H}}
\newcommand{\RR}{\ensuremath{\mathbb R}}
\newcommand{\NN}{\ensuremath{\mathbb N}}
\newcommand{\pinf}{\ensuremath{ {+}\infty }}
\newcommand{\minf}{\ensuremath{ {-}\infty }}
\newcommand{\RX}{\ensuremath{ \left]\minf, \pinf\right] }}
\newcommand{\RRX}{\ensuremath{ \left[ \minf,\pinf \right]}}
\newcommand{\RP}{\ensuremath{\mathbb{R}_{+}}}
\newcommand{\RPP}{\ensuremath{\mathbb{R}_{++}}}
\newcommand{\RMM}{\ensuremath{\mathbb{R}_{--}}}
\newcommand{\RM}{\ensuremath{\mathbb{R}_{-}}}
\newcommand{\closu}[1]{\ensuremath{\overline{#1} }}
\newcommand{\lspan}{\ensuremath{\operatorname{span}}}
\newcommand{\cdom}{\ensuremath{\operatorname{\overline{dom}}}}
\newcommand{\cran}{\ensuremath{\operatorname{\overline{ran}}}}
\newcommand{\ran}{\ensuremath{\operatorname{ran}}}
\newcommand{\dom}{\ensuremath{\operatorname{dom}}}
\newcommand{\gra}{\ensuremath{\operatorname{gra}}}
\newcommand{\Fix}{\ensuremath{\operatorname{Fix}}}
\newcommand{\Id}{\ensuremath{ {\operatorname{Id} }}}
\newcommand{\prox}{\ensuremath{\operatorname{Prox}}}
\newcommand{\card}{\ensuremath{\operatorname{card}}}
\newcommand{\q}{\ensuremath{ {\operatorname{q} }}  }
\newcommand{\lscc}{\ensuremath{\varGamma_{0}}}
\newcommand{\infconv}{\ensuremath{\mbox{\footnotesize$\,\square\,$}}}
\newcommand{\einfconv}{\ensuremath{\mbox{\footnotesize$\,\boxdot\,$}}}
\newcommand{\env}{\ensuremath{\operatorname{env}}}
\newcommand*{\stardiff}{%
	\mathrel{\vcenter{\offinterlineskip
			\hbox{$\kern+2.0pt {}^\ast$}\vskip-1.20ex\hbox{$-$}}}}
\let\originalleft\left
\renewcommand{\left}{\mathopen{}\originalleft}
\newcommand{\email}[1]{\href{mailto:#1}{\nolinkurl{#1}}} 
\newcommand{\prH}{\ensuremath{\mathrm{Proj}\rbr{\HH} }}
\let\setminus\smallsetminus
\begin{document}

\title{  
\sffamily	On
      sums and convex
      combinations
      of projectors
	onto convex sets
 }

\author{
  	Heinz H.\ Bauschke\thanks{
  		Mathematics, University of British Columbia, Kelowna, B.C.\ V1V~1V7, Canada. 
  	Email: 
      \href{mailto: heinz.bauschke@ubc.ca}{\texttt{heinz.bauschke@ubc.ca}}.},~
      Minh N. Bui\thanks{
        Department of Mathematics,  
North Carolina State University,
Raleigh,
    NC~27695-8205,~USA and
        Mathematics,
            University of British Columbia,
            Kelowna, B.C. \ V1V~1V7,
            Canada.
Email: \href{mailto: mnbui@ncsu.edu}{\texttt{mnbui@ncsu.edu}}.},~
  	and Xianfu Wang\thanks{
  	Mathematics, University of British Columbia, Kelowna, B.C.\ V1V~1V7, Canada. 
  	Email: \href{mailto: shawn.wang@ubc.ca}{\texttt{shawn.wang@ubc.ca}}.}
}
\date{September~14, 2018}


\maketitle

\begin{abstract}
\noindent
The projector
onto the  
Minkowski
sum 
of closed convex sets
is generally not
equal to
the sum of individual projectors.
In this work,
we provide a complete 
answer
to
the question
of characterizing
the instances
where such 
an equality holds.
Our results
unify and extend
the case of linear subspaces
and 
Zarantonello's results
for projectors onto cones. 
A detailed
analysis in the case
of convex combinations
is also carried out.
We establish 
the partial sum property
for projectors
onto convex cones, 
and we also present
various examples
as well as
a detailed analysis
in the univariate
case.
\end{abstract}
{\small
\noindent
{\bfseries 2010 Mathematics Subject Classification:}
{Primary 
47H05, 
47H09, 
47L07; 
Secondary 
46A32, 
52A05,  
52A41, 
90C25. 
}

\noindent {\bfseries Keywords:}
convex set,
convex cone,
convex combination,
projection operator,
projector, 
sum of projectors,
partial sum property,
monotone operator,
proximity operator.
}


\section{Introduction}

Throughout this paper,
we assume that 
\begin{empheq}[box = \mybluebox]{equation}\label{H}
	\text{$\HH$ is a real Hilbert space} 
\end{empheq}
with inner product $\scal{}{}$ 
and induced norm $\norm{}$.
Now assume  
that\footnote{For basic 
	Convex Analysis,
    we refer the reader to  \cite{bauschke2017convex,rocky,Zalinescu-book2002,zarontello1971projections-partI}.}
\begin{empheq}[box = \mybluebox]{equation}
\text{$\fa{C_{i}}{i \in I}$ 
	is a finite
	family
	of nonempty closed convex subsets
	of 
	$\HH$} 
\end{empheq}
with corresponding projectors
\begin{empheq}[box = \mybluebox]{equation}
\fa{P_{C_{i}}}{i \in I}
\end{empheq}
and that 
      \begin{empheq}[box =\mybluebox]{equation}
        \text{$\fa{\alpha_{i}}{i \in I}$ are real numbers.}
      \end{empheq}
In this paper, we analyze carefully 
the question:
\emph{When is $\sum_{i \in I}\alpha_{i} P_{C_{i}}$ 
  a projector?}
This allows us
to provide a complete answer
to the question 
``\emph{When is
the sum of projectors also a projector?}''
(In view of \cref{p:sumCi}\cref{i:set-sum2}, 
an affirmative answer to this question
requires the sum $\sum_{i\in I}C_i$ to be closed.
This happens, for instance, when each set is bounded.)
It is known that, in the case of
linear subspaces,
$\sum_{ i \in I} P_{C_{i}}$
is a projector
onto a closed linear subspace
if and only if
$	\fa{C_{i}}{i \in I}$
is pairwise orthogonal; 
see \cite[Theorem~2, p.~46]{Halmos-spectral-1951}. 
This question is also of interest
in Quantum Mechanics \cite[p.~50]{KayeQuantum}. 
In 1971, Zarantonello \cite{zarontello1971projections-partI}
answered 
this question in the case
of convex cones, i.e.,
if $\fa{C_{i}}{i \in I}$ are cones,
then
$\sum_{ i \in I} P_{C_{i}}$
is a projector
if and only if 
$\fa{P_{C_{i}}}{i \in I}$
is pairwise orthogonal
in the sense that,
for every $\rb{i,j} \in I \times I$
with $i \neq j$,  we have
$\rb{\forall x \in \HH}\,\scal{P_{C_{i}}x}{ P_{C_{j}}x } = 0.$ 
However, the question remains open 
in the general convex case.  
Therefore,
one goal of this paper 
is to provide
necessary and sufficient 
conditions
for $\sum_{ i \in I}\alpha_{i} P_{C_{i}}$
to be a projector
without any further assumption
on the sets $\fa{C_{i}}{ i \in I}$.
As a consequence,
we answer entirely the question ``When is
the sum of projectors also a projector?''
Our results 
unify the two aforementioned results
and make a connection with
the 
recent work 
\cite{bartz2017resolvent} 
where it was proven 
that, if the sum of a family of proximity operators
is a proximity operator, then every partial sum
remains a proximity operator. 
Interestingly, we shall see that
this property is still valid
in the class of projectors onto convex cones;  
in other words,
if a finite sum of 
projectors onto convex cones is
a projector, then so are its partial sums. 
Nevertheless, 
this result fails 
outside the world of convex cones.  
Another goal
is to characterize
the 
instances where
a convex average of $\fa{P_{C_{i}}}{i \in I}$
is again a projector. In striking contrast 
to a result in 1963 by Moreau
\cite{Moreau-prox-1963}, which
states that a convex average of
proximity operators is always
a proximity operator, 
we shall see in
\cref{t:convex-comb}
that taking convex combinations
does not preserve the class
of projectors onto convex sets
 (see \cref{t:convex-comb} for
  the rigorous statement).
Our main
results are summarized as follows:
\begin{itemize}
	\item We provide a 
        new characterization
        of proximity operators
        in \cref{t:prox-charac} (for
          a list of 
        other characterizations,
      see \cite{Combettes-2018-mono}).
          In turn,
          we derive a
          new characterization
          of projectors (\cref{t:main}),
          which is
	a pillar of this paper 
	and 
	a variant
	of \cite[Theorem~4.1]{zarontello1971projections-partI}.
	Furthermore, we also
	partially answer 
	an open question by Zarantonello
	regarding \cite[Theorem~4.1]{zarontello1971projections-partI}.
    \item \cref{t:conic-comb}
      characterizes
	(without any additional assumptions
	on the underlying sets)
      when $\sum_{i\in I}\alpha_{i} P_{C_{i}}$
      is a projector; 
      \cref{t:sum-many}       
      concerns the sum $\sum_{i \in I}P_{C_{i}}$.
    \item By specifying our 
      analysis to the case
      of convex average in \cref{t:convex-comb},
      we explicitly determine 
      families of closed convex sets
      that are preserved under
      taking convex combinations.
	\item We present the 
	\emph{partial sum property} 
	(see \cite[Theorem~4.2]{bartz2017resolvent})
	for projectors onto convex cones
	in \cref{c:cone-many},
	whose proof is based on
	\cref{t:two-cones-pr}
	and \cite[Theorem~4.2]{bartz2017resolvent}.
	We also recover 
	\cite[Theorems~5.3 and 5.5]{zarontello1971projections-partI}.
\end{itemize}

The paper is organized as follows. 
In \cref{sect:aux},
we collect miscellaneous results
that will be used in the sequel. 
Our main results are presented 
in \cref{sect:main}: 
\cref{t:prox-charac}
provides a characterization
of proximity operators,
while projectors 
are dealt with in \cref{t:main},
which is a variant of
\cite[Theorem~4.1]{zarontello1971projections-partI}. 
This allows
us to recover the classical
characterization of orthogonal projectors; 
see, e.g., \cite[Theorem~4.29]{weidmann2012}.
In turn, 
we establish
a necessary and sufficient 
condition
for a linear combination
of projectors 
to be a projector in \cref{t:conic-comb}
and then particularize 
to sums of projectors in \cref{t:sum-many}.
We then specialize 
the analysis of
\cref{sect:main}
to convex combinations
of projectors in \cref{sect:convex-comb}.
In \cref{sect:partial-sum}, 
we show that,
in the case of sums of projectors,
\cref{t:sum-many} covers
the result
obtained by Zarantonello 
(\cite[Theorem~5.5]{zarontello1971projections-partI})
and the case
of linear subspaces.
Furthermore, we provide 
\cref{t:two-cones-pr} and \cref{c:cone-many}
to illustrate the 
connection 
between our work
and  \cite{bartz2017resolvent,zarontello1971projections-partI}.
The one-dimensional
case is the topic
of \cref{sect:R},
where  all
the 
pairs
$\rb{C,D}$
of nonempty closed
convex subsets 
of $\RR$
satisfying $P_{C} + P_{D} = P_{C+D}$
are explicitly determined. 
Finally, we turn to 
a generalization 
of
the classical result 
\cite[Theorem~2, p.~46]{Halmos-spectral-1951} 
in \cref{sect:more}. 
Various examples
are given
to illustrate
the necessity
of our assumptions.

The notation used
in this 
paper is standard
and mainly follows \cite{bauschke2017convex}. 
We write $A \coloneqq B$ to indicate that 
$A$ is defined to be $B$. 
We set 
$\NN \coloneqq \left\{ 0 ,1, 2, \ldots  \right\}$, 
$\RP \coloneqq \left[0, +\infty \right[$,
$\RPP \coloneqq \sqbo{0, + \infty},$
$\RM \coloneqq \left]-\infty, 0 \right]$,
and $\RMM\coloneqq \sqbo{-\infty, 0}$.
The closed ball
in $\HH$
with center $x \in \HH$
and radius $\rho \in \RPP$
is $\ball{x}{\rho} \coloneqq 
\menge{y \in \HH}{ \norm{y-x} \leq \rho }$.
It
is 
convenient to set 
\begin{empheq}[box = \mybluebox]{equation}\label{q}
\q \coloneqq \tfrac{1}{2} \norm{}^{2}, 
\end{empheq}	
where  $\nabla \q  = \Id $ is
the \emph{identity operator} on $\HH$. 
Let $C$ be a subset of $\HH$.
Then
we denote by
$\closu{C}$ the closure
of $C$ (with respect 
to the norm topology 
on $\HH$), by $d_{C}$
its \emph{distance function,}
by $\pc{C}$
its \emph{polar cone}, i.e.,
$\pc{C}\coloneqq \menge{u \in \HH}{\sup \scal{C}{u} \leq 0},
$ 
and by $C^{\perp}$
its \emph{orthogonal complement.} 
Next,
the 
\emph{indicator}
and 
\emph{support functions}
of $C$
are 
\begin{equation}
\iota_{ C } \colon \HH\to \RRX : 
x\mapsto \begin{cases}
0, & \text{if~}x \in C; \\
+\infty, &\text{otherwise}
\end{cases}
\end{equation}
and 
\begin{equation}
\sigma_{C} \colon \HH \to \RRX : 
u \mapsto \sup \scal{C}{u},
\end{equation}
respectively.
Moreover, if
$C$ is convex, closed, and nonempty,
then the projector
associated with $C$
is denoted by $P_{C}$.
In turn, we set 
\begin{empheq}[box = \mybluebox]{equation}
  \prH \coloneqq
\menge[\Big]{P_{C}}{\HH \supseteq C
\text{ is convex, closed, and nonempty}}.
\end{empheq}
Next, 
the set of 
convex, lower semicontinuous,
and proper
functions from 
$\HH$ to $\RX$
is $\varGamma_{0}\rb{\HH}$. 
The \emph{domain} of 
of a
function $f \colon \HH\to \RRX$
is $\dom{f} \coloneqq \menge{x \in \HH}{f\rb{x } < + \infty} $
with closure $\cdom f$,
its \emph{graph} is denoted by
$\gra f$, 
its \emph{conjugate} 
is denoted by $f^{\ast}$,
and  its \emph{subdifferential}
is denoted by $\partial f$;
furthermore, if $f \in \varGamma_{0}\rb{\HH},$
then we denote its \emph{proximity operator}
by $\prox_{f}$ 
and its \emph{Moreau envelope} by
$\env f$, i.e.,
$\env f \coloneqq f \infconv   \q = f \einfconv \q$,
where $\infconv$ and $\einfconv$ denote
the infimal convolution and 
the exact infimal convolution,
respectively.
Next, let $T \colon \HH\to \HH$.
The \emph{range} of $T$ is $\ran T$ 
with  closure $\cran T$. 
If $T \in \mathscr{B}\rb{\HH}$,
the space of bounded linear operators
on $\HH$,
then its \emph{adjoint}
is denoted by $T^{\ast}$. 
Finally, we adopt
the convention that \emph{empty sums 
are zero}.


\section{Auxiliary results}\label{sect:aux}

In this section,
we provide 
various
results that
will be useful
in the sequel.
Let us start with a
simple identity in $\HH$.

\begin{lemma}
  \label{l:identity-vectors}
  Let $x \in \HH$,
  let $\fa{x_{i}}{i \in I} $
  be a finite family in $\HH$,
 let $\fa{\alpha_{i}}{i \in I} $
 be a family in $\RR$, 
 and set $\alpha \coloneqq \sum_{i \in I}\alpha_{i}$.
  Then the following hold:
  \begin{enumerate}
    \item 
    \label{i:iden-vectors}
  \begin{math}
    \norm*{x-\sum_{i \in I}\alpha_{i}x_{i}}^{2}
    = \rbr{1-\alpha} \norm{x}^{2}
    +\sum_{i \in I}\alpha_{i}\norm{x-x_{i}}^{2}
    +\rbr{\alpha-1}\sum_{i \in I}\alpha_{i} \norm{x_{i}}^{2}
    - \tfrac{1}{2}\sum_{i \in I}\sum_{j \in I}\alpha_{i}\alpha_{j}
    \norm{x_{i}-x_{j}}^{2}.
  \end{math}
\item\label{i:iden-2}
  Suppose that $\rbr{\forall i \in I} \, \alpha_{i}=1$. Then
  $\alpha= \card I$ and
  \begin{equation}
    \label{eq:iden-21}
    \rbr{\alpha-1}\sum_{i \in I} \norm{x_{i}}^{2}
    - \tfrac{1}{2}\sum_{i \in I}\sum_{j \in I} \norm{x_{i}-x_{j}}^{2}
    =  
    \sum_{\substack{\rbr{i,j}\in I\times I \\ i \neq j} }\scal{x_{i}}{x_{j}} 
  \end{equation}
  and 
  \begin{equation}
    \label{eq:iden-22}
    \norm*{x-\sum_{i \in I}x_{i}}^{2}
    =
            \rbr{1-\alpha} \norm{x}^{2}
    +\sum_{i \in I}\norm{x-x_{i}}^{2} 
    +\sum_{\substack{\rbr{i,j}\in I\times I \\ i \neq j} }\scal{x_{i}}{x_{j}} .
  \end{equation}
  \end{enumerate}
\end{lemma}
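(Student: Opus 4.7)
Both parts are purely algebraic identities in the inner product space $\HH$, so the plan is to reduce everything to the ``basic'' form $\norm{\sum_i\alpha_ix_i}^2=\sum_i\sum_j\alpha_i\alpha_j\scal{x_i}{x_j}$ and the polarization rule $\norm{x_i-x_j}^2=\norm{x_i}^2-2\scal{x_i}{x_j}+\norm{x_j}^2$, and then read off the claimed equalities.

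\textbf{Part \cref{i:iden-vectors}.} I expand the left-hand side directly:
\begin{equation*}
\norm*{x-\sum_{i\in I}\alpha_ix_i}^2
=\norm{x}^2-2\sum_{i\in I}\alpha_i\scal{x}{x_i}
+\sum_{i\in I}\sum_{j\in I}\alpha_i\alpha_j\scal{x_i}{x_j}.
\end{equation*}
Separately I expand $\sum_{i\in I}\alpha_i\norm{x-x_i}^2=\alpha\norm{x}^2-2\sum_{i\in I}\alpha_i\scal{x}{x_i}+\sum_{i\in I}\alpha_i\norm{x_i}^2$ and solve for the cross term $-2\sum_{i\in I}\alpha_i\scal{x}{x_i}$. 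Substituting back into the expansion above yields
\begin{equation*}
\norm*{x-\sum_{i\in I}\alpha_ix_i}^2
=(1-\alpha)\norm{x}^2+\sum_{i\in I}\alpha_i\norm{x-x_i}^2
-\sum_{i\in I}\alpha_i\norm{x_i}^2
+\sum_{i\in I}\sum_{j\in I}\alpha_i\alpha_j\scal{x_i}{x_j}.
\end{equation*}
It remains to rewrite the last two terms in the form announced. Using polarization twice I get
\begin{equation*}
\sum_{i\in I}\sum_{j\in I}\alpha_i\alpha_j\norm{x_i-x_j}^2
=2\alpha\sum_{i\in I}\alpha_i\norm{x_i}^2
-2\sum_{i\in I}\sum_{j\in I}\alpha_i\alpha_j\scal{x_i}{x_j},
\end{equation*}
so that $\sum_{i\in I}\sum_{j\in I}\alpha_i\alpha_j\scal{x_i}{x_j}-\sum_{i\in I}\alpha_i\norm{x_i}^2=(\alpha-1)\sum_{i\in I}\alpha_i\norm{x_i}^2-\tfrac12\sum_{i\in I}\sum_{j\in I}\alpha_i\alpha_j\norm{x_i-x_j}^2$, which plugs in to give exactly the claimed identity.

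\textbf{Part \cref{i:iden-2}.} Setting every $\alpha_i=1$ immediately gives $\alpha=\card I$. For \eqref{eq:iden-21}, I specialize the polarization identity derived in the previous paragraph (with all $\alpha_i=1$) to obtain $-\tfrac12\sum_{i\in I}\sum_{j\in I}\norm{x_i-x_j}^2=-\alpha\sum_{i\in I}\norm{x_i}^2+\sum_{i\in I}\sum_{j\in I}\scal{x_i}{x_j}$, add $(\alpha-1)\sum_{i\in I}\norm{x_i}^2$ to both sides, and then split the double sum as $\sum_{i\in I}\sum_{j\in I}\scal{x_i}{x_j}=\sum_{i\in I}\norm{x_i}^2+\sum_{(i,j):i\neq j}\scal{x_i}{x_j}$; the $\norm{x_i}^2$ terms cancel and \eqref{eq:iden-21} follows. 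Finally, \eqref{eq:iden-22} is obtained by setting $\alpha_i=1$ in \cref{i:iden-vectors} and substituting \eqref{eq:iden-21} into the last two terms.

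\textbf{Main obstacle.} There is no real obstacle: the result is a bookkeeping exercise in inner products. The only thing to be careful about is to combine the two double sums correctly and to track the sign of the cross term from $\norm{x-x_i}^2$, which is the one place where a minor error could propagate through the whole identity.
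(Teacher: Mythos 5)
Your proof is correct, and it takes a different route from the paper for part \cref{i:iden-vectors}: the paper proceeds by induction on $\card I$, using the two-vector identity $\norm{\lambda u+(1-\lambda)v}^2=\lambda\norm{u}^2+(1-\lambda)\norm{v}^2-\lambda(1-\lambda)\norm{u-v}^2$ as the base case and then carrying out a fairly long bookkeeping computation in the inductive step, whereas you expand $\norm[\big]{x-\sum_{i\in I}\alpha_ix_i}^2$ directly, eliminate the cross term $-2\sum_{i\in I}\alpha_i\scal{x}{x_i}$ via the expansion of $\sum_{i\in I}\alpha_i\norm{x-x_i}^2$, and convert $\sum_{i\in I}\sum_{j\in I}\alpha_i\alpha_j\scal{x_i}{x_j}$ into the $\norm{x_i-x_j}^2$ double sum by polarization (using $\sum_{j\in I}\alpha_j=\alpha$). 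I checked each algebraic step and they are all sound; in particular your key intermediate identity $\sum_{i}\sum_{j}\alpha_i\alpha_j\norm{x_i-x_j}^2=2\alpha\sum_i\alpha_i\norm{x_i}^2-2\sum_i\sum_j\alpha_i\alpha_j\scal{x_i}{x_j}$ is exactly right and does the work that the paper's induction does. Your treatment of \cref{i:iden-2} is essentially the same as the paper's (expand, regroup, split the diagonal of the double sum, then substitute into \cref{i:iden-vectors} to get \cref{eq:iden-22}). The direct expansion is shorter and more transparent than the induction; the paper's inductive argument buys only the reuse of a quotable two-vector identity from the literature, at the cost of a heavier computation.
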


\begin{proof}
  \cref{i:iden-vectors}:
  Without loss of generality,
  assume that $I = \left\{ 1,\ldots, m \right\} $,
  where $m \in \NN \smallsetminus \left\{ 0 \right\}$.
  Let us proceed by
  induction on $m$.

  \textbf{Base case:} When $m=1$,
  by applying \cite[Corollary~2.15]{bauschke2017convex} to
  $\rbr{x-x_{1},x_{1}} $
  and noticing that $\alpha=\alpha_{1}$, 
  we obtain
  \begin{subequations}
    \begin{align}
      \norm{x-\alpha_{1}x_{1}}^{2}
      & = \norm{\rbr{1-\alpha}x + \alpha\rbr{x-x_{1}}  }^{2}
      \\
      & = \rbr{1-\alpha} \norm{x}^{2} + \alpha\norm{x-x_{1}}^{2}
      - \rbr{1-\alpha} \alpha\norm{x_{1}}^{2} \\
      & = \rbr{1-\alpha} \norm{x}^{2} + \alpha_{1}\norm{x-x_{1}}^{2}
      + \rbr{\alpha-1} \alpha_{1}\norm{x_{1}}^{2}
      - \tfrac{1}{2}\alpha_{1}\alpha_{1}\norm{x_{1}-x_{1}}^{2}.
    \end{align}
  \end{subequations}

  \textbf{Inductive step:} Assume that $m\geq 2$ 
		and that
		the result holds
		for families containing
            $m-1$ or fewer elements.
		Moreover, 
		set $J \coloneqq\{1,\ldots, m-1\} $
        and $\beta \coloneqq \sum_{j \in J}\alpha_{j}$.
        Then, by the base case,
        we have
        \begin{equation}
          \alpha_{m}^{2}\norm{x_{m}}^{2}
          -2\alpha_{m}\scal{x_{m}}{x} =
          \norm{x-\alpha_{m}x_{m}}^{2} - \norm{x}^{2}
          = -\alpha_{m}\norm{x}^{2} 
          +\alpha_{m}\norm{x-x_{m}}^{2}
          +\rbr{\alpha_{m}-1}\alpha_{m}\norm{x_{m}}^{2}.
        \end{equation}
        Hence, 
        since $\beta + \alpha_{m} = \alpha$,
        we infer from the induction
        hypothesis that
        \begin{subequations} 
          \begin{align}
            \norm*{x-\sum_{i \in I} \alpha_{i}x_{i}}^{2} 
            & = \norm*{\rbr[\Bigg]{x-\sum_{j \in J}\alpha_{j}x_{j}} - \alpha_{m}x_{m}}^{2} \\
            & = \norm*{x-\sum_{j \in J}\alpha_{j}x_{j}}^{2}+\alpha_{m}^{2}\norm{x_{m}}^{2}
            - 2\alpha_{m}\scal*{x_{m}}{x-\sum_{j \in J}\alpha_{j}x_{j}} 
            \\
            & = \norm*{x-\sum_{j \in J}\alpha_{j}x_{j}}^{2}+
            \rbr[\Big]{\alpha_{m}^{2}\norm{x_{m}}^{2}
            -2\alpha_{m}\scal{x_{m}}{x}} + 2\alpha_{m}\sum_{j \in
            J}\alpha_{j}\scal{x_{m}}{x_{j}} 
            \\
            & = \norm*{x-\sum_{j \in J}\alpha_{j}x_{j}}^{2}
            -\alpha_{m}\norm{x}^{2}+\alpha_{m}\norm{x-x_{m}}^{2}
            +\rbr{\alpha_{m}-1} \alpha_{m}\norm{x_{m}}^{2}
            \notag \\
            & \qquad \qquad + \alpha_{m}\sum_{j \in J}\alpha_{j}\rbr[\Big]{\norm{x_{m}}^{2}+\norm{x_{j}}^{2}
            -\norm{x_{m}-x_{j}}^{2}} 
            \\
            & = \rbr{1-\beta} \norm{x}^{2}
            + \sum_{j \in J}\alpha_{j}\norm{x-x_{j}}^{2} 
            + \rbr{\beta-1}\sum_{j \in J}\alpha_{j}\norm{x_{j}}^{2}
            - \tfrac{1}{2}\sum_{j \in J}\sum_{k \in
            J}\alpha_{j}\alpha_{k}\norm{x_{j}-x_{k}}^{2} 
            \notag \\
            & \qquad \qquad 
            -\alpha_{m}\norm{x}^{2} +\alpha_{m}\norm{x-x_{m}}^{2} 
            + \alpha_{m}\rbr[\Big]{-1+\alpha_{m}+ \sum_{j \in J}\alpha_{j}} \norm{x_{m}}^{2}
            \notag \\
            & \qquad \qquad + \alpha_{m}\sum_{j \in J}\alpha_{j}\norm{x_{j}}^{2}
            - \sum_{j \in J}\alpha_{m}\alpha_{j}\norm{x_{m}-x_{j}}^{2}
            \\
            & = \rbr{1-\beta-\alpha_{m}} \norm{x}^{2}
            + \sum_{i \in I}\alpha_{i}\norm{x-x_{i}}^{2}
            +\rbr{\beta +\alpha_{m}-1} \sum_{j \in J}\alpha_{j}\norm{x_{j}}^{2} 
            \notag \\
            & \qquad \qquad - \tfrac{1}{2}\sum_{i \in I}\sum_{j \in I}
            \alpha_{i}\alpha_{j}\norm{x_{i}-x_{j}}^{2} 
            +\alpha_{m}\rbr{\alpha-1} \norm{x_{m}}^{2}
            \\
            & = \rbr{1-\alpha} \norm{x}^{2}
            +\sum_{i \in I}\alpha_{i}\norm{x-x_{i}}^{2}
            + \rbr{\alpha-1} \sum_{i \in I}\alpha_{i}\norm{x_{i}}^{2}
            - \tfrac{1}{2}\sum_{i \in I}\sum_{j \in I}
            \alpha_{i}\alpha_{j}\norm{x_{i}-x_{j}}^{2}, 
          \end{align}
        \end{subequations}
         which completes the induction argument.

         \cref{i:iden-2}: 
         Since $\rbr{\forall i \in I} \, \alpha_{i}= 1$,
         we have $\alpha = \card I$,
         and
          thus   
          \begin{subequations} 
           \begin{align}
     & \rbr{\alpha-1}\sum_{i \in I}\norm{x_{i}}^{2}
    - \tfrac{1}{2}\sum_{i \in I}\sum_{j \in I}
    \norm{x_{i}-x_{j}}^{2} \notag \\           
    & \qquad \qquad   =      \sum_{i \in I}\rbr[\Big]{ \rbr{\alpha-1} \norm{x_{i}}^{2}           
            - \tfrac{1}{2} \sum_{j \in I\smallsetminus \left\{ i \right\}}\norm{x_{i}-x_{j}}^{2} }             \\
 & \qquad \qquad   = 
 \sum_{ i \in I}\rbr[\Big]{ \rbr{\alpha-1} \norm{x_{i}}^{2} -
    \tfrac{1}{2}\sum_{j \in I\smallsetminus\left\{ i \right\}}\rbr[\big]{\norm{x_{i}}^{2}  
  + \norm{x_{j}}^{2} - 2\scal{x_{i}}{x_{j}} }   } 
          \\
 & \qquad \qquad   = \sum_{ i \in I} \rbr[\Big]{\rbr{\alpha-1}\norm{x_{i}}^{2}
    - \tfrac{1}{2}\rbr{\alpha-1} \norm{x_{i}}^{2} - 
    \tfrac{1}{2}\sum_{j \in I \smallsetminus\left\{ i \right\}} \norm{x_{j}}^{2}
+ \sum_{j \in I \smallsetminus \left\{ i \right\}} \scal{x_{i}}{x_{j}}  } 
\\
  & \qquad \qquad   = 
     \tfrac{1}{2}\rbr{\alpha-1} \sum_{i \in I}\norm{x_{i}}^{2}
   -  \tfrac{1}{2}\sum_{i \in I} \rbr[\Big]{- \norm{x_{i}}^{2} +
    \sum_{j \in I}\norm{x_{j}}^{2}}  
    + \sum_{\substack{\rbr{i,j}\in I\times I \\ i \neq j} }
    \scal{x_{i}}{x_{j}} 
    \\
   & \qquad \qquad   = 
     \tfrac{1}{2}\rbr{\alpha-1} \sum_{i \in I}\norm{x_{i}}^{2}
    +\tfrac{1}{2}\sum_{i \in I} \norm{x_{i}}^{2} 
    - \tfrac{\alpha}{2}\sum_{i \in I}\norm{x_{i}}^{2}   
+\sum_{\substack{\rbr{i,j}\in I\times I \\ i \neq j} }
    \scal{x_{i}}{x_{j}}
    \\
     & \qquad \qquad   = 
    \sum_{\substack{\rbr{i,j}\in I\times I \\ i \neq j} }\scal{x_{i}}{x_{j}} ,
           \end{align}
          \end{subequations}
        and hence \cref{eq:iden-21} holds.
        Consequently, \cref{eq:iden-22}
        follows from \cref{i:iden-vectors} and \cref{eq:iden-21}.
\end{proof}

We shall need
the following identities involving
convex cones.

\begin{lemma}\label{l:cone-identities}
	Let $K$ and $S$ be nonempty
	closed convex cones in $\HH$.
	Then the following hold: 
	\begin{enumerate}
		\item\label{i:cone-identities1} $\rb{\forall x \in \HH}
              \, 
		\norm{P_{K}x}^{2} = \scal{x}{P_{K}x}$.
		\item\label{i:cone-identities2} 
		\begin{math}
			\rb{\forall x  \in \HH}
                  \,
			\scal{P_{\pc{K} } x}{  P_{\pc{S} }x}
			+ \norm{P_{K}x}^{2} + \norm{P_{S}x}^{2}
			= \norm{x}^{2} + \scal{P_{K}x}{P_{S}x}.		
		\end{math}
	\end{enumerate}
\end{lemma}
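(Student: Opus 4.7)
The plan is to reduce everything to Moreau's conical decomposition: for any nonempty closed convex cone $K$ in $\HH$ and any $x \in \HH$, one has $x = P_{K}x + P_{\pc{K}}x$ with $\scal{P_{K}x}{P_{\pc{K}}x} = 0$ (this is a standard result, see e.g.\ \cite[Theorem~6.30]{bauschke2017convex}). Both identities follow from short bilinear manipulations afterwards, so no deep ingredient is needed.

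For \cref{i:cone-identities1}, I would take the inner product of the Moreau decomposition $x = P_{K}x + P_{\pc{K}}x$ with $P_{K}x$. The cross term $\scal{P_{K}x}{P_{\pc{K}}x}$ vanishes by orthogonality, leaving $\scal{x}{P_{K}x} = \norm{P_{K}x}^{2}$.

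For \cref{i:cone-identities2}, I would apply Moreau's decomposition to both $K$ and $S$ to rewrite $P_{\pc{K}}x = x - P_{K}x$ and $P_{\pc{S}}x = x - P_{S}x$. Expanding bilinearly then yields
\begin{equation}
\scal{P_{\pc{K}}x}{P_{\pc{S}}x} = \norm{x}^{2} - \scal{x}{P_{K}x} - \scal{x}{P_{S}x} + \scal{P_{K}x}{P_{S}x}.
\end{equation}
Substituting $\scal{x}{P_{K}x} = \norm{P_{K}x}^{2}$ and $\scal{x}{P_{S}x} = \norm{P_{S}x}^{2}$ from \cref{i:cone-identities1} and rearranging produces the claimed identity.

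The main obstacle is really just bookkeeping: once Moreau's conic decomposition is invoked, \cref{i:cone-identities1} is a one-line computation, and \cref{i:cone-identities2} is a three-line expansion using \cref{i:cone-identities1}. There are no subtle convergence or measurability issues to address, and no additional assumption on $K$ or $S$ beyond being nonempty closed convex cones is needed.
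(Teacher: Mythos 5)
Your proposal is correct and follows essentially the same route as the paper: part \cref{i:cone-identities1} via Moreau's conical decomposition plus the orthogonality $\scal{P_{K}x}{P_{\pc{K}}x}=0$, and part \cref{i:cone-identities2} by writing $P_{\pc{K}}x = x - P_{K}x$, $P_{\pc{S}}x = x - P_{S}x$, expanding bilinearly, and substituting \cref{i:cone-identities1}. No gaps; the argument matches the paper's proof step for step.
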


\begin{proof}
	Take $x \in \HH$. 
	\cref{i:cone-identities1}: 
	We derive from \cite[Theorem~6.30(i)\&(ii)]{bauschke2017convex}
	that 
	\begin{math}
		\norm{P_{K}x}^{2} = \scal{P_{K}x}{P_{K}x}
		= \scal{ x- P_{\pc{K}}x}{P_{K}x} = \scal{x}{P_{K}x},
	\end{math}
	as claimed.
	\cref{i:cone-identities2}: 
	The Moreau conical decomposition
      (\cite{Moreau-cone-1962})
	and \cref{i:cone-identities1}
	give  
	\begin{subequations}
		\begin{align}
			\scal{P_{\pc{K} } x}{  P_{\pc{S} }x} 
			& = \scal{x - P_{K}x }{ x - P_{S}x} \\ 
			& = \norm{x}^{2} 
			- \scal{x}{P_{S}x} - \scal{x}{P_{K}x}
			+	\scal{P_{K}x}{ P_{S}x} \\ 
			& = \norm{x}^{2}
			- \norm{P_{S}x}^{2}
			- \norm{P_{K}x}^{2} 
			+ \scal{P_{K}x}{ P_{S}x},
		\end{align}
	\end{subequations}
	and the assertion follows.
\end{proof}

\begin{fact}\label{eg:PC-mono}
	Let $C$ be a nonempty
	closed convex subset of $\HH$.
	Then the following hold:
	\begin{enumerate}
		\item\label{i:Pc-maxmono} $P_{C}$ is maximally monotone.
		\item\label{i:PC-3*mono} $P_{C}$
		is $3^{\ast}$ monotone\footnote{A monotone
			operator $A \colon \HH \to 2^{\HH}$
			is \emph{$3^{\ast}$ monotone}
			if 
			\begin{math}
				\rb{\forall \rb{x,u} \in \dom{A} \times \ran{A}}
                        \,
				\inf_{\rb{y,v} \in \gra{A} } \scal{x-y}{u-v} > -\infty.
			\end{math}}.
	\end{enumerate}
\end{fact}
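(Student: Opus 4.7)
The plan is to obtain both assertions as direct corollaries of the identification $P_{C} = \prox_{\iota_{C}}$ combined with the standard Moreau--Yosida calculus.

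For \cref{i:Pc-maxmono}: since $\iota_{C} \in \lscc$, Rockafellar's theorem guarantees that the normal-cone operator $N_{C} \coloneqq \partial \iota_{C}$ is maximally monotone. Its resolvent $\rbr{\Id + N_{C}}^{-1}$ coincides with $P_{C}$, so $P_{C}$ is a firmly nonexpansive operator defined on all of $\HH$. The classical fact (see, e.g., \cite{bauschke2017convex}) that a firmly nonexpansive operator with full domain on a real Hilbert space is maximally monotone then closes this part.

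For \cref{i:PC-3*mono}, the plan is to realize $P_{C}$ as the subdifferential of some $g \in \lscc$ and then invoke the classical result (see \cite{bauschke2017convex}) that subdifferentials of $\lscc$ functions are $3^{\ast}$ monotone. The natural candidate is $g \coloneqq \env \sigma_{C}$. Indeed, Moreau's identity $\env f + \env f^{\ast} = \q$ applied to $f = \iota_{C}$ (for which $f^{\ast} = \sigma_{C}$) yields $\env \sigma_{C} = \q - \env \iota_{C} = \q - \tfrac{1}{2}d_{C}^{2}$, which lies in $\lscc$. Differentiating and using $\nabla \env h = \Id - \prox_{h}$, together with Moreau's decomposition $\prox_{\iota_{C}} + \prox_{\sigma_{C}} = \Id$, gives $\nabla g = \nabla \env \sigma_{C} = \Id - \prox_{\sigma_{C}} = \Id - \rbr{\Id - P_{C}} = P_{C}$. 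Hence $P_{C} = \partial g$ with $g \in \lscc$, and \cref{i:PC-3*mono} follows.

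There is no genuine obstacle here; the proof is essentially bookkeeping with standard Moreau--Yosida identities. The one step meriting a line of care is the verification that $g = \env \sigma_{C}$ lies in $\lscc$ and has gradient $P_{C}$, which is precisely what the chain of identities above accomplishes. An alternative route would be to invoke directly that every proximity operator is the gradient of a convex function (as follows, for instance, from the Baillon--Haddad theorem applied to the $1$-Lipschitz gradient $\Id - P_{C}$ of $\env \iota_{C}$), but the Moreau-envelope route above is the most economical.
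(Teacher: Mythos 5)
Your proposal is correct, but for the $3^{\ast}$ monotonicity it takes a genuinely different route from the paper. For \cref{i:Pc-maxmono} you essentially reprove the fact the paper simply cites (the paper invokes \cite[Example~20.32]{bauschke2017convex}); your argument via $P_{C}=\rbr{\Id+\partial\iota_{C}}^{-1}$, firm nonexpansiveness, and full domain is the standard proof of that citation, so no difference in substance. For \cref{i:PC-3*mono} the paper argues: $P_{C}$ is firmly nonexpansive by \cite[Proposition~4.16]{bauschke2017convex}, and firmly nonexpansive (i.e., cocoercive) operators are $3^{\ast}$ monotone by \cite[Example~25.20(ii)]{bauschke2017convex}. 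You instead exhibit $P_{C}$ as the gradient of the convex function $g=\env\sigma_{C}=\q-\tfrac{1}{2}d_{C}^{2}\in\lscc\rbr{\HH}$, using $\env f+\env f^{\ast}=\q$, $\nabla\env h=\Id-\prox_{h}$, and Moreau's decomposition, and then invoke the fact that subdifferentials of functions in $\lscc\rbr{\HH}$ are $3^{\ast}$ monotone; the chain of identities is correct and $g$ is indeed convex and real-valued, so the argument is sound. The paper's route is shorter and more general (it applies to every firmly nonexpansive operator, not only to those that are gradients), whereas yours exploits the special structure of proximity operators as gradients of convex functions, which is a perfectly legitimate and self-contained alternative; your parenthetical appeal to Baillon--Haddad as a way to see that $P_{C}$ is a gradient is not really what that theorem gives (it yields cocoercivity of a gradient, not the existence of a potential), but since your main argument already produces the potential explicitly, this aside is harmless.
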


\begin{proof}
	\cref{i:Pc-maxmono}: See 
	\cite[Example~20.32]{bauschke2017convex}.
	\cref{i:PC-3*mono}:  Because $P_{C}$
	is firmly nonexpansive 
	by \cite[Proposition~4.16]{bauschke2017convex},
	the conclusion follows
	from \cite[Example~25.20(ii)]{bauschke2017convex}.
\end{proof}

In the finite-dimensional case,
\cref{p:sumCi}\cref{i:set-sum1}
can also be deduced from 
\cite[Theorem~3.15]{Bauschke-Wang-Moffat-2013}. 
Furthermore, 
let us point out that 
\cref{p:sumCi}\cref{i:set-sum2} 
generalizes 
Zarantonello's
\cite[Theorem~5.4]{zarontello1971projections-partI}.

\begin{proposition}\label{p:sumCi}
  Let $\fa{C_{i}}{i \in I} $
	be a finite family of nonempty
	closed convex subsets of 
    $\HH$,
     let $\fa{\alpha_{i}}{i \in I} $ be
     a family in $\RR$,
     and set $\alpha\coloneqq \sum_{i \in I}\alpha_{i}$.
	Then the following hold: 	
	\begin{enumerate}
		\item\label{i:sum-many1} For every $x \in \HH$,
		\begin{align}
              & 	\q\rbr[\bigg]{ x - \sum_{i \in I} \alpha_{i}P_{C_{i}}x }
        = \tfrac{1}{2}\sum_{i \in I}\alpha_{i}d_{ C_{i} }^{2}\rb{x}
			- \rb{\alpha-1}\q\rb{x} 
            \notag  \\
         &   \qquad \qquad 
\qquad \qquad \qquad 
\qquad 
+    \rbr{\alpha-1} \sum_{i \in I}\alpha_{i}\q\rbr{P_{C_{i}}x} 
      - \tfrac{1}{2}\sum_{i \in I}\sum_{j \in I}\alpha_{i}\alpha_{j}
            \q\rbr{P_{C_{i}}x - P_{C_{j}}x}            .
		\end{align}
		\item\label{i:set-sum1}
              Suppose that $\rbr{\forall i \in I}\, \alpha_{i}
              \geq 0$.
          Then
		\begin{math}
          \cran{ \sum_{ i \in I} \alpha_{i}P_{C_{i}} } 
          = \closu{\sum_{ i \in I}\alpha_{i} C_{i} }.
		\end{math}
		\item \label{i:set-sum2}
		Suppose that
            $\rbr{\forall i \in I} \, \alpha_{i} \geq  0$
        and that 
		there exists a closed convex
		set $C$ such that 
        $\sum_{ i \in I} \alpha_{i}P_{C_{i}} = P_{C}$.
        Then $\sum_{ i \in I} \alpha_{i}C_{i}$
        is closed and $C= \sum_{ i \in I} \alpha_{i}C_{i}$.
	\end{enumerate}
\end{proposition}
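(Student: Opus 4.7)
My plan is to handle the three parts in order. Part~\cref{i:sum-many1} follows by direct substitution into \cref{l:identity-vectors}, part~\cref{i:set-sum1} rests on the Brezis--Haraux theorem, and part~\cref{i:set-sum2} is a short consequence of~\cref{i:set-sum1}.

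For~\cref{i:sum-many1}, I would apply \cref{l:identity-vectors}\cref{i:iden-vectors} with the choice $x_{i} := P_{C_{i}}x$. Recognizing $\norm{x - P_{C_{i}}x}^{2} = d_{C_{i}}^{2}(x)$ and using $\q = \tfrac{1}{2}\norm{}^{2}$ to rewrite every squared norm as twice a $\q$-value, then dividing the resulting identity by two, produces exactly the claimed expression. This step is pure bookkeeping.

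For~\cref{i:set-sum1}, the easy inclusion follows from $\ran P_{C_{i}} = C_{i}$: one has $\ran\rb{\sum_{i \in I}\alpha_{i}P_{C_{i}}} \subseteq \sum_{i \in I}\alpha_{i}C_{i}$, hence $\cran\rb{\sum_{i \in I}\alpha_{i}P_{C_{i}}} \subseteq \closu{\sum_{i \in I}\alpha_{i}C_{i}}$. For the reverse inclusion I would invoke the Brezis--Haraux theorem (see, e.g., \cite[Theorem~25.24]{bauschke2017convex}): the closure of the range of the sum of two maximally monotone, $3^{\ast}$ monotone operators satisfying a mild domain qualification equals the closure of the sum of their ranges. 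By \cref{eg:PC-mono}, each $P_{C_{i}}$ is everywhere defined, maximally monotone, and $3^{\ast}$ monotone; multiplication by the nonnegative scalar $\alpha_{i}$ preserves these three properties (the case $\alpha_{i} = 0$ being vacuous since $0 \cdot C_{i} = \{0\}$). Because a finite sum of everywhere-defined, maximally monotone, $3^{\ast}$ monotone operators inherits maximal monotonicity and $3^{\ast}$ monotonicity, iterating Brezis--Haraux on the $\fa{\alpha_{i}P_{C_{i}}}{i \in I}$ yields $\cran\rb{\sum_{i \in I}\alpha_{i}P_{C_{i}}} = \closu{\sum_{i \in I}\alpha_{i}\ran P_{C_{i}}} = \closu{\sum_{i \in I}\alpha_{i}C_{i}}$, as required.

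For~\cref{i:set-sum2}, assume $\sum_{i \in I}\alpha_{i}P_{C_{i}} = P_{C}$. Then $\ran P_{C} = C$ is already closed, so \cref{i:set-sum1} gives $C = \cran\rb{\sum_{i \in I}\alpha_{i}P_{C_{i}}} = \closu{\sum_{i \in I}\alpha_{i}C_{i}}$. Combined with the elementary inclusion $C = \ran P_{C} \subseteq \sum_{i \in I}\alpha_{i}C_{i}$ from the preceding paragraph, this forces $\sum_{i \in I}\alpha_{i}C_{i} = C$, which is therefore closed. The main obstacle throughout is~\cref{i:set-sum1}: one must invoke the correct form of Brezis--Haraux and confirm that the hypotheses (full domain, maximal monotonicity, $3^{\ast}$ monotonicity) are preserved under nonnegative linear combinations and finite sums. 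Once that is secured, \cref{i:sum-many1} and~\cref{i:set-sum2} are essentially formal.
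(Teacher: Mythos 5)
Your proposal is correct and follows essentially the same route as the paper: part \cref{i:sum-many1} is the same substitution into \cref{l:identity-vectors}\cref{i:iden-vectors}, and part \cref{i:set-sum2} is the same chain of inclusions. For part \cref{i:set-sum1} the paper simply cites a lemma of Bauschke--Moursi packaging the range-of-sum identity for nonnegative combinations of everywhere-defined $3^{\ast}$ monotone operators, which is itself a Brezis--Haraux-type result, so your direct (iterated) appeal to the Brezis--Haraux theorem together with \cref{eg:PC-mono} is the same underlying argument.
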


\begin{proof}
	
	\cref{i:sum-many1}: 
    Let $x $ be in $\HH$.
    Apply \cref{l:identity-vectors}\cref{i:iden-vectors}
    to $\rbr{x,\fa{P_{C_{i}}x}{i \in I} , \fa{\alpha_{i}}{i \in I}  } $
    and notice that $\rbr{\forall i \in I} \,\norm{x-P_{C_{i}}x} =d_{C_{i}}\rbr{x} $.

	\cref{i:set-sum1}: 
    Because the operators $\fa{P_{C_{i}}}{i \in I} $
  are $3^{\ast}$ monotone
  by \cref{eg:PC-mono}\cref{i:PC-3*mono}
  and because $\rbr{\forall i \in I} \, \dom P_{C_{i}} = \HH$,
  we derive from \cite[Lemma~3.1(ii)]{bauschke-walaa-dec2017}
  that
  \begin{math}
    \cran \sum_{i \in I}\alpha_{i}P_{C_{i}}
    = \closu{\sum_{i \in I}\alpha_{i}\ran P_{C_{i}} }
    = \closu{\sum_{i \in I}\alpha_{i}C_{i} },
  \end{math}
  as desired. 

	\cref{i:set-sum2}: 
	It follows from \cref{i:set-sum1}
	and our assumption that 
	\begin{equation}
      \sum_{ i \in I} \alpha_{i}C_{i}
      \subseteq \closu{\sum_{ i \in I} \alpha_{i }C_{i}}
      = \cran \sum_{ i \in I}\alpha_{i} P_{C_{i}}
		=\cran P_{C}
		= C
		= \ran P_{C}
        = \ran \sum_{ i \in I} \alpha_{i}P_{C_{i}}
        \subseteq \sum_{ i \in I} \alpha_{i}C_{i}.
	\end{equation}
	Thus, we conclude that 
    $\sum_{ i \in I} \alpha_{i}C_{i} = C$
    and that $\sum_{ i \in I}\alpha_{i} C_{i}$ 
    is closed.
\end{proof}

  \begin{remark}\label{rem:counter-sum}
    \cref{p:sumCi}\cref{i:set-sum1}\&\cref{i:set-sum2}
    may fail if 
    $\rbr{\exists i \in I}~\alpha_{i} <0$.
    Indeed,
    in the setting of \cref{p:sumCi},
    suppose that 
    $I = \left\{ 1,2 \right\} $,
    that $C_{1}=C_{2}$,
    and that $\alpha_{1}=-\alpha_{2}=1$.
    Then
  $\alpha_{1}P_{C_{1}}+\alpha_{2}P_{C_{2}} = 0 
  = P_{\left\{ 0 \right\}}$,
  but $\alpha_{1}C_{1} + \alpha_{2}C_{2}
  =C_{1}- C_{1} \neq \left\{ 0 \right\}$ 
  if $C_{1}$ is not a singleton.
  \end{remark}

\begin{lemma}\label{l:ph-mono-T0}
	Let $T \colon \HH \to \HH$
	be monotone and 
	 positively homogeneous\footnote{A mapping
		$F\colon \HH\to \HH$ is \emph{positively homogeneous}
            if $\rb{\forall x \in \HH}\rb{\forall \lambda \in \RPP}\, F\rb{\lambda x}
		=\lambda Fx$. Note that we
		do not require $F0 =0$.}.
	Then $T0 = 0$.
\end{lemma}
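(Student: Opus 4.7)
The plan is to exploit monotonicity against the origin, leverage positive homogeneity to introduce a scaling parameter, and then let that parameter tend to $0$ to extract a one-sided inequality on $T0$ that must hold on all rays.

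Concretely, the starting point would be to apply the monotonicity of $T$ to the pair $\rbr{x,0}$ for an arbitrary $x \in \HH$, yielding
\begin{equation}
\scal{x-0}{Tx-T0} = \scal{x}{Tx} - \scal{x}{T0} \geq 0.
\end{equation}
I would then replace $x$ by $\lambda x$ with $\lambda \in \RPP$ and invoke positive homogeneity (which applies only on $\RPP$, but that is all we need) to obtain
\begin{equation}
\lambda^{2}\scal{x}{Tx} - \lambda \scal{x}{T0} \geq 0,
\end{equation}
so that, after dividing by $\lambda > 0$,
\begin{equation}
\lambda \scal{x}{Tx} \geq \scal{x}{T0}.
\end{equation}

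Letting $\lambda \downarrow 0$ in this inequality produces $\scal{x}{T0} \leq 0$ for every $x \in \HH$. Replacing $x$ by $-x$ yields the reverse inequality, hence $\scal{x}{T0} = 0$ for every $x \in \HH$, and choosing $x = T0$ gives $\norm{T0}^{2} = 0$, i.e., $T0 = 0$.

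There is no real obstacle here: the only delicate point is that positive homogeneity is assumed only for $\lambda > 0$ (the footnote emphasizes this), but the argument only uses $\lambda \in \RPP$ and a limit, so this poses no problem. The proof is essentially two lines once one decides to test monotonicity against $0$ and then rescale.
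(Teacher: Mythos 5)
Your proof is correct and follows essentially the same route as the paper: test monotonicity against $0$ at the scaled point $\lambda x$, use positive homogeneity, divide by $\lambda$, and let $\lambda \downarrow 0$. The only cosmetic difference is that the paper specializes to $x = T0$ from the start, whereas you keep $x$ arbitrary and conclude $\scal{x}{T0} \leq 0$ for all $x$ before choosing $x = T0$ (the symmetrization with $-x$ is not even needed).
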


\begin{proof}
	Set $x \coloneqq T0$. Then, 
	since $T$
	is monotone and positively homogeneous,
	\begin{math}
        \rb{\forall \lambda \in \RPP}\, 
	0 \leq \scal{T\rb{\lambda x} - T0 }{\lambda x - 0}
	= \lambda \scal{\lambda Tx - T0}{x},
	\end{math}
	from which we infer that 
	\begin{math}
	\rb{\forall \lambda \in \RPP}
      \,
	\scal{T0}{x} \leq \lambda \scal{Tx}{x}.
	\end{math}
	Hence, letting $\lambda \downarrow 0$ yields 
	$\norm{T0}^{2} = \scal{T0}{x} \leq 0$. 
	Consequently, $T0 =0$.
\end{proof}

The following is 
a variant of 
\cite[Lemma~6.1]{zarontello1971projectionsII}.
We provide a proof for completeness.

\begin{lemma}\label{l:derivative}
	Let $f \colon \HH \to \RR$ 
	be G\^{a}teaux differentiable
	on $\HH$,
	and suppose that 
	$\nabla f$
	is monotone and 
	positively homogeneous.
	Then 
	\begin{equation}
	\rb{\forall x \in \HH}
	\quad 
	f\rb{x} = \tfrac{1}{2}\scal{x}{\nabla f\rb{x}} + f\rb{0}.
	\end{equation}
\end{lemma}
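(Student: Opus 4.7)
The plan is to reduce the statement to a one-dimensional integration along rays through the origin. Fix $x\in\HH$ and define $\varphi\colon \RR \to \RR$ by $\varphi(t) \coloneqq f(tx)$. Since $f$ is G\^ateaux differentiable on $\HH$, for every $t_{0}\in\RR$ we have
\begin{equation}
\lim_{s\to 0} \frac{\varphi(t_{0}+s)-\varphi(t_{0})}{s}
= \lim_{s\to 0} \frac{f(t_{0}x+sx)-f(t_{0}x)}{s}
= \scal{x}{\nabla f(t_{0}x)},
\end{equation}
so $\varphi$ is differentiable on $\RR$ with $\varphi'(t)=\scal{x}{\nabla f(tx)}$.

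Next, I would use the two assumptions on $\nabla f$ to evaluate $\varphi'$ explicitly on $[0,1]$. By \cref{l:ph-mono-T0} applied to $T = \nabla f$, we have $\nabla f(0)=0$. Combined with positive homogeneity of $\nabla f$, this yields $\nabla f(tx) = t\nabla f(x)$ for every $t\in[0,1]$, and hence $\varphi'(t) = t\,\scal{x}{\nabla f(x)}$ on $[0,1]$. In particular, $\varphi'$ is continuous on $[0,1]$, so $\varphi$ is of class $C^{1}$ there and the fundamental theorem of calculus applies.

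Finally, integrating gives
\begin{equation}
f(x) - f(0) = \varphi(1) - \varphi(0) = \int_{0}^{1}\varphi'(t)\,dt
= \scal{x}{\nabla f(x)} \int_{0}^{1} t\,dt = \tfrac{1}{2}\scal{x}{\nabla f(x)},
\end{equation}
which is the desired identity after rearrangement. There is no real obstacle here beyond the two small checks: that $t\mapsto f(tx)$ inherits differentiability from the G\^ateaux differentiability of $f$, and that \cref{l:ph-mono-T0} is available to fill in the value of $\nabla f$ at $0$ (needed to make positive homogeneity give the clean formula $\nabla f(tx) = t\nabla f(x)$ across all of $[0,1]$ rather than only on $\RPP$).
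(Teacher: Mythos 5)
Your proof is correct and follows essentially the same route as the paper: restrict $f$ to the ray $t\mapsto f(tx)$, use \cref{l:ph-mono-T0} together with positive homogeneity to get $\nabla f(tx)=t\nabla f(x)$ for $t\geq 0$, and integrate over $[0,1]$ via the fundamental theorem of calculus. No gaps; the one-dimensional reduction and both small checks you mention are exactly what the paper does.
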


\begin{proof}
	By \cref{l:ph-mono-T0} and our assumption,
	$\nabla f\rb{0} =0$, which implies that 
	\begin{equation}\label{e:ph-grad}
	\rb{\forall x \in \HH}\rb{\forall \lambda \in \RP}
	\quad 
	\nabla f\rb{\lambda x } = \lambda \nabla f\rb{x}.
	\end{equation}
	Now fix $x \in \HH$, and 
	set $\phi \colon \RR \to \RR :
	t\mapsto f\rb{tx} $.
	Then, since $f$ is G\^{a}teaux
	differentiable, we 
	see that  
	\begin{subequations}
		\begin{align} 
		\rb{\forall t \in \RR}
		\quad 
		\lim_{0\neq \alpha  \to 0}\frac{\phi\rb{t+\alpha} - \phi\rb{t}}{\alpha}
		& =  \lim_{0\neq \alpha \to 0}
		\frac{f\rb{ \rb{t+\alpha}x } - f\rb{tx} }{\alpha} \\ 
		& = \lim_{0\neq \alpha \to 0} 
		\frac{ f\rb{tx + \alpha x} - f\rb{tx} }{\alpha}  \\ 
		&  = \scal{x}{ \nabla f \rb{tx} }. \label{e:d-phi}
		\end{align}
	\end{subequations}
	Hence, $\phi$
	is differentiable on $\RR$
	and, in view of \cref{e:ph-grad}\&\cref{e:d-phi}, 
	\begin{math}
        \rb{\forall t \in \RP } \,  \phi'\rb{t} = t\scal{x}{\nabla f\rb{x}}.
	\end{math}
	Consequently,
	\begin{equation}
	f\rb{x} - f\rb{0} = \phi\rb{1} - \phi\rb{0}
	= \int_{0}^{1}\phi'\rb{t}dt 
      = \int_{0}^{1}t\scal{x}{\nabla f\rb{x}}dt
	= \tfrac{1}{2}\scal{x}{\nabla f \rb{x}},
	\end{equation}
	from which we obtain the conclusion.
\end{proof}

Recall 
from \cite[pp.~89{\textendash}90]{Boris-I}
that, if $f \colon \HH \to \left]-\infty,+\infty\right]$,
then the \emph{Fr\'{e}chet subdifferential}
of $f$ is 
\begin{equation}
\hat{\partial}f \colon 
\HH\to 2^{\HH} : 
x \mapsto 
\menge*{u \in \HH}
{\varliminf_{\substack{ x \neq y \to x  }}  
	\frac{f\rb{y} - f\rb{x} - \scal{u}{y-x} }{\norm{y-x}} \geq 0 }.
\end{equation}

\begin{lemma}\label{l:F-subdif}
	Let $f \colon \HH \to \RR $
	and $z \in \HH$. Suppose that
	$f$ is Fr\'{e}chet differentiable
	on $\HH$. Then 
	\begin{equation}\label{e:F-sub-sumrule}
	\rb{\forall \varepsilon \in \RPP}
	\quad 
	\hat{\partial}\rb{f + \varepsilon  d_{ \left\{z\right\} }}\rb{z}
	= \nabla f\rb{z} + \varepsilon \ball{0}{1}.
	\end{equation}
\end{lemma}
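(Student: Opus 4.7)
The plan is to unfold the definition of the Fréchet subdifferential at $z$ and reduce the limit inferior to a one-line computation that separates a smooth part (handled by Fréchet differentiability) from a nonsmooth part (the norm $\norm{y-z}$). Throughout, observe that $d_{\left\{z\right\}} = \norm{\cdot - z}$, so if we set $g \coloneqq f + \varepsilon d_{\left\{z\right\}}$, then $g(z) = f(z)$ and, for $y \neq z$,
\begin{equation*}
g(y) - g(z) - \scal{u}{y-z} = \bigl[f(y) - f(z) - \scal{\nabla f(z)}{y-z}\bigr] + \scal{\nabla f(z) - u}{y-z} + \varepsilon \norm{y-z}.
\end{equation*}
Dividing by $\norm{y-z}$, the first bracket is $o(1)$ as $y \to z$ by Fréchet differentiability of $f$ at $z$, so the liminf appearing in the definition of $\hat{\partial}g(z)$ equals
\begin{equation*}
\varepsilon + \varliminf_{y \to z,\, y \neq z} \frac{\scal{\nabla f(z) - u}{y-z}}{\norm{y-z}}.
\end{equation*}

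Next I would use the elementary fact that, for any $v \in \HH$,
\begin{equation*}
\inf_{0 \neq h \in \HH} \frac{\scal{v}{h}}{\norm{h}} = -\norm{v},
\end{equation*}
with the infimum attained (approximately, or exactly if $v \neq 0$) in the direction $h = -v$. Taking $h \coloneqq y-z$ and noting that we may let $y \to z$ along any direction, the liminf above equals $-\norm{\nabla f(z) - u}$. Hence $u \in \hat{\partial}g(z)$ if and only if $\varepsilon - \norm{u - \nabla f(z)} \geq 0$, which is precisely $u \in \nabla f(z) + \varepsilon\, \ball{0}{1}$, establishing \cref{e:F-sub-sumrule}.

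The only mild subtlety is justifying that the liminf is truly attained by a sequence $y_n \to z$ (not just by a sequence of directions) and that Fréchet (as opposed to merely directional) differentiability is what makes the error term uniform in direction; both are standard and I would simply note them. No additional hypotheses on $\HH$ or on $f$ beyond those stated are needed.
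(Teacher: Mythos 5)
Your proof is correct, and it takes a genuinely different route from the paper. The paper gets \cref{e:F-sub-sumrule} by citation: it invokes Mordukhovich's exact sum rule for Fr\'echet subdifferentials (one summand Fr\'echet differentiable), the fact that for a convex function the Fr\'echet subdifferential coincides with the convex subdifferential, and then the known formula $\partial d_{\{z\}}\rb{z}=\ball{0}{1}$ from \cite[Example~16.62]{bauschke2017convex}. You instead work straight from the definition of $\hat{\partial}$: the decomposition
\begin{equation*}
g\rb{y}-g\rb{z}-\scal{u}{y-z}
=\bigl[f\rb{y}-f\rb{z}-\scal{\nabla f\rb{z}}{y-z}\bigr]
+\scal{\nabla f\rb{z}-u}{y-z}+\varepsilon\norm{y-z}
\end{equation*}
is valid, the bracketed term is $o\rb{\norm{y-z}}$ precisely because differentiability is Fr\'echet (this is the right place to use the hypothesis, as you note), and the remaining liminf is $-\norm{\nabla f\rb{z}-u}$ by Cauchy--Schwarz together with the choice $y_{n}=z-\tfrac{1}{n}\rb{\nabla f\rb{z}-u}$, which in fact makes the quotient equal to $-\norm{\nabla f\rb{z}-u}$ for every $n$ (so the ``mild subtlety'' you flag about realizing the liminf by a sequence rather than by directions evaporates; no extra argument is needed). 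The membership criterion $\varepsilon-\norm{u-\nabla f\rb{z}}\geq 0$ then gives exactly $\nabla f\rb{z}+\varepsilon\ball{0}{1}$. Your version is elementary and self-contained, avoiding the variational-analysis machinery; the paper's version is shorter but rests on the cited sum rule and subdifferential formula. Both are complete proofs of the lemma.
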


\begin{proof}
	Fix $\varepsilon \in \RPP.$
	Since $f$ is Fr\'{e}chet
	differentiable 
	and $d_{\{z\}}$
	is convex, 
	we derive from 
	\cite[Proposition~1.107 and Theorem~1.93]{Boris-I}
	that 
	\begin{math}
	\hat{\partial}\rb{g + \varepsilon  d_{\{z\}}}\rb{z} 
	= \nabla f\rb{z} + \hat{\partial}\rb{\varepsilon d_{ \left\{z\right\} }}\rb{z}
	= \nabla f \rb{z} + \partial \rb{\varepsilon d_{\left\{z\right\}}}\rb{z}
	= \nabla f \rb{z} + \varepsilon \partial d_{\left\{z\right\}}\rb{z}.
	\end{math}
	Hence, in view of \cite[Example~16.62]{bauschke2017convex} 
	(applied to $C = \left\{z\right\}$),
	\cref{e:F-sub-sumrule} follows.
\end{proof}


\section{Main results}\label{sect:main}

  \begin{theorem}[Characterization theorem
    for proximity operators]\label{t:cha-prox}
  \label{t:prox-charac}
  Let $\varphi \in \varGamma_{0}\rb{\HH}  $,
  let $T \colon \HH \to \HH$,
  and set $f \coloneqq \varphi \circ T
  + \q \circ \rb{ {\Id} - T } $.
  Then the following
  are equivalent: 
  \begin{enumerate}
    \item\label{i:prox1} $T = \prox_{\varphi}$.
    \item\label{i:prox2} $T$ is monotone,
       $\gra\rbr{\varphi + \iota_{\ran T}} $
      is a dense subset of $\gra \varphi$,
      and $f$ is G\^ateaux differentiable
      on $\HH$ with $\nabla f = \Id - T$.
  \end{enumerate}
  Furthermore, if \cref{i:prox1} 
  or \cref{i:prox2} holds,
  then $f=\env \varphi$ 
  and $f$ is Fr\'{e}chet differentiable 
  on $\HH$.
\end{theorem}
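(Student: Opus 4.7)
The plan is to prove the equivalence by establishing each implication separately, and to read off the ``furthermore'' clause from the proofs. For $(i) \Rightarrow (ii)$ I would appeal to standard properties of the proximity operator: firm nonexpansiveness of $\prox_\varphi$ gives monotonicity of $T$; Moreau's envelope theorem gives $f = \env \varphi$, Fréchet differentiability of $\env \varphi$, and $\nabla \env \varphi = \Id - \prox_\varphi = \Id - T$; and the density of $\gra(\varphi + \iota_{\ran T})$ in $\gra \varphi$ reduces, via $\ran T = \ran \prox_\varphi = \dom \partial \varphi$, to approximating each $y \in \dom \varphi$ by $y_n := \prox_{(1/n)\varphi}(y) \in \dom \partial \varphi$. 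The convergence $y_n \to y$ comes from the asymptotic behavior of the Moreau envelope as the parameter shrinks, and $\varphi(y_n) \to \varphi(y)$ follows by combining the monotone convergence $\env_{1/n}\varphi(y) \uparrow \varphi(y)$ with the lower semicontinuity of $\varphi$.

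The pivotal direction is $(ii) \Rightarrow (i)$, for which the decisive trick is to consider the auxiliary function $g := \q - f \colon \HH \to \RR$. Because $\nabla f = \Id - T$ by hypothesis, one has $\nabla g = \Id - (\Id - T) = T$, and since $T$ is monotone this yields convexity of $g$ via the familiar one-dimensional slicing argument: restrict $g$ to any line $t \mapsto g(w + t(x_0 - w))$ and observe that its derivative is nondecreasing in $t$. Invoking convexity of $g$ at an arbitrary $w \in \HH$ with gradient $Tw$ yields, for every $x_0 \in \HH$, the inequality $g(x_0) \geq g(w) + \scal{Tw}{x_0 - w}$. Substituting $g = \q - f$ and $f(w) = \varphi(Tw) + \q(w - Tw)$ and simplifying, this rearranges into $\varphi(Tw) + \q(x_0 - Tw) \geq \varphi(Tx_0) + \q(x_0 - Tx_0) = f(x_0)$, i.e., the proximal minimization inequality restricted to $y = Tw \in \ran T$. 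The density hypothesis upgrades this from $\ran T$ to all of $\dom \varphi$: for $y \in \dom \varphi$, choose a sequence $Tw_n \to y$ with $\varphi(Tw_n) \to \varphi(y)$ and pass to the limit using continuity of $\q$. Since equality holds at $y = Tx_0$, we conclude that $Tx_0$ is the unique minimizer of the strongly convex map $y \mapsto \varphi(y) + \q(x_0 - y)$, which is precisely $\prox_\varphi(x_0)$.

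The ``furthermore'' clause is then an immediate by-product: once $T = \prox_\varphi$, the equality $f = \env \varphi$ holds by definition, and Fréchet differentiability of $\env \varphi$ on $\HH$ is classical since $\nabla \env \varphi = \Id - \prox_\varphi$ is $1$-Lipschitz and Gâteaux differentiability with continuous gradient entails Fréchet differentiability. The hard part will be the $(ii) \Rightarrow (i)$ direction, and specifically the insight that the correct auxiliary object is $g = \q - f$ rather than $f$ itself: this is the only way to convert the monotonicity hypothesis on $T$ into a convexity property, from which the proximal inequality pops out after a short algebraic rearrangement. The density hypothesis is equally essential, since without it one only gets the proximal inequality on $\ran T$, which in general is too thin a set to pin down $\prox_\varphi$.
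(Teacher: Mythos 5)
Your proposal is correct, and while the first implication and the ``furthermore'' clause follow the same standard facts as the paper (firm nonexpansiveness, $f=\env\varphi$ with $\nabla\env\varphi=\Id-\prox_\varphi$, and density of $\gra\rbr{\varphi+\iota_{\dom\partial\varphi}}$ in $\gra\varphi$ via Moreau--Yosida approximation), your argument for the key direction \cref{i:prox2}$\Rightarrow$\cref{i:prox1} genuinely diverges from the paper's after the common first step. Both proofs introduce the same pivotal function $g\coloneqq\q-f$ and use monotonicity of $T=\nabla g$ to get convexity of $g$; but the paper then passes to the conjugate, setting $h\coloneqq g^{\ast}-\q$, proving $h=\varphi$ on $\ran T$ by the Fenchel identity $g^{\ast}\rbr{\nabla g\rbr{x}}=\scal{x}{\nabla g\rbr{x}}-g\rbr{x}$, extending to $h\geq\varphi$ on $\dom h$ via density of $\dom\partial g^{\ast}=\ran T$ in $\dom g^{\ast}$ and to $h\leq\varphi$ on $\dom\varphi$ via the density hypothesis, and finally concluding $T=\nabla g=\nabla\rbr{\varphi+\q}^{\ast}=\prox_\varphi$ through the Fenchel--Moreau theorem. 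You instead stay entirely primal: the gradient inequality $g\rbr{x_{0}}\geq g\rbr{w}+\scal{Tw}{x_{0}-w}$ rearranges (the algebra checks out, since $\q\rbr{w}-\q\rbr{w-Tw}-\scal{Tw}{w}=-\q\rbr{Tw}$) into $\varphi\rbr{Tw}+\q\rbr{x_{0}-Tw}\geq\varphi\rbr{Tx_{0}}+\q\rbr{x_{0}-Tx_{0}}$, i.e.\ the prox-characterizing inequality over $\ran T$, and the density hypothesis plus continuity of $\q$ upgrades it to all of $\dom\varphi$, whence $Tx_{0}$ is the unique minimizer defining $\prox_\varphi x_{0}$. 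Your route buys elementarity: no conjugation, no Fenchel--Moreau, no appeal to the density of $\dom\partial g^{\ast}$, and the density hypothesis enters exactly once in its natural role; the paper's route buys the explicit identification $g^{\ast}=\varphi+\q$ (a fact of independent interest tying $f$ to the envelope duality) and proceeds entirely by citation of ready-made results. Two small points you leave implicit but which are harmless: G\^ateaux differentiability of $f$ presupposes $f$ is real-valued, which is what guarantees $Tx_{0}\in\dom\varphi$ so that $Tx_{0}$ is an admissible minimizer, and the one-dimensional slicing argument for convexity of $g$ is exactly the content of the result the paper cites for that step.
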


\begin{proof}
  ``\cref{i:prox1}\ensuremath{\implies}\cref{i:prox2}'':
  First, by \cite[Example~20.30]{bauschke2017convex},
  $T = \prox_{\varphi}$ is monotone.
  Next, since $\varphi \in \varGamma_{0}\rbr{\HH} $
  and $T = \prox_{\varphi}$,
  \cite[Eq.~(24.3)]{bauschke2017convex} gives
  $\ran T = \dom \partial\varphi$,
  and hence, 
  according to \cite[Proposition~16.38]{bauschke2017convex},
  it follows that 
$\gra\rbr{\varphi + \iota_{\ran T}} $
is a dense subset of $\gra \varphi$.
Finally, in view of \cite[Remark~12.24]{bauschke2017convex},
we see that $f = \varphi \circ T + \q \circ \rbr{\Id - T} 
= \varphi \circ {\prox_{\varphi}} + \q \circ \rbr{\Id - {\prox_{\varphi}}} 
= \env \varphi$,
and   \cite[Proposition~12.30]{bauschke2017convex}
thus entails
that $f$ is Fr\'echet (thus G\^ateaux) differentiable 
on $\HH$ with $\nabla f = \Id - {\prox_{\varphi}} = \Id - T$.

``\cref{i:prox1}\ensuremath{\impliedby}\cref{i:prox2}'':
	Set $g \coloneqq \q - f$.
	Then, 
	on the one hand,
	because $\q$ and $f$
	are G\^{a}teaux differentiable,
	so is $g$. On the other hand, 
	since 
	$\nabla \q = \Id $ 
	and $\nabla f = \Id - T$,
	we infer that 
	\begin{math}
	\nabla g = \nabla \rb{\q - f} = \nabla \q - \nabla f = T,
	\end{math} 
	which is monotone by assumption.
	Altogether, 
	\cite[Proposition~17.7]{bauschke2017convex}
	yields the convexity of $g$. 
	Therefore,
	since $g$ is G\^{a}teaux differentiable
	on $\HH$, 
	it follows from 
      \cite[Proposition~17.48(i)]{bauschke2017convex}
	that 
	$g$ is lower semicontinuous on $\HH$. 
	To sum up, we have shown that 
	\begin{equation}\label{e:info-g}
	\text{$g = \q -f $ belongs to $\lscc\rb{\HH}$ 
		and is G\^{a}teaux differentiable on $\HH$
		with $\nabla g= T$.}
	\end{equation}
    Moreover, $\cref{e:info-g}$ and 
    \cite[Corollary~13.38]{bauschke2017convex}
    yield 
    \begin{equation}
      \label{eq:infor-gconj}
      g^{\ast} \in \varGamma_{0}\rbr{\HH} .
    \end{equation}
	In turn, set
	\begin{math}
      h \coloneqq g^{\ast} -\q.
	\end{math}
    Let us now establish that 
    \begin{equation}
      \label{eq:todo-prox}
      h = \varphi \text{~on~} \ran T.
    \end{equation}
	Towards this goal, fix $u \in \ran{T}$,
      say $u = Tx \overset{\cref{e:info-g}}{=}
      \nabla g\rb{x}$, where $x \in \HH$.
	Then \cref{e:info-g}, 
	\cite[Proposition~17.35]{bauschke2017convex},
	and the very definitions of $g$
	and $f$ 
	assert that 
	\begin{subequations} 
		\begin{align}
		h\rb{u} 
		= g^{\ast}\rb{u} - \q\rb{u}
		& = g^{\ast}\rb{\nabla g \rb{x}} - \q\rb{Tx} \\
		& = \scal{x}{\nabla g\rb{x}} - g\rb{x} - \q\rb{Tx} \\
		& = \scal{x}{Tx} - \q\rb{x} + f\rb{x} - \q\rb{Tx} \\
		& = \scal{x}{Tx} - \tfrac{1}{2}\norm{x}^{2} + \tfrac{1}{2}\norm{x-Tx}^{2}
        + \varphi\rbr{Tx} 
		- \tfrac{1}{2}\norm{Tx}^{2} \\
        & = \varphi\rbr{Tx} \\
        & = \varphi\rbr{u}.
		\end{align}
	\end{subequations}
    Hence, \cref{eq:todo-prox} holds. 
    Next, fix $v \in \dom h $, and we shall
    prove that $\varphi \rbr{v} \leq h\rbr{v} $.
    Indeed, 
    on the one hand,
    because $h = g^{\ast} - \q$
    and $\dom \q = \HH$,
    we have $\dom h = \dom g^{\ast}$.
	On the other hand,
    due to
    \cref{e:info-g} and
    \cite[Corollary~16.30]{bauschke2017convex},
	\begin{math}
	\dom \partial g^{\ast}
	= \dom \rb{\partial g}^{-1}
    = \ran \partial g,
	\end{math}
    and since
	$\ran \partial g = \ran \nabla g = \ran T $
      thanks to 
	\cref{e:info-g}
	and 
	\cite[Proposition~17.31(i)]{bauschke2017convex},
    we deduce that 
    $\dom \partial g^{\ast} = \ran T$.
    Altogether, because
    $v \in \dom h = \dom g^{\ast}$,
    \cref{eq:infor-gconj} and 
    \cite[Proposition~16.38]{bauschke2017convex}
    ensures the existence of
    a sequence $\fa{v_{n}}{n \in \NN} $
    in $\dom \partial g^{\ast} = \ran T$
    such that $v_{n} \to v$
    and $g^{\ast}\rbr{v_{n}}  \to g^{\ast}\rbr{v} $.
    Therefore,
    by the definition of $h$,
    we get 
    $h\rbr{v_{n}}  = g^{\ast}\rbr{v_{n}} - \q\rbr{v_{n}} 
    \to g^{\ast}\rbr{v} -\q\rbr{v}  = h\rbr{v} $.
    However, because 
    $\set{v_{n}}{n \in \NN} \subset \ran T$ and 
    $v_{n} \to v$,
    the lower semicontinuity
    of $\varphi$ and 
    \cref{eq:todo-prox} imply that 
    \begin{math}
      h\rbr{v} = \lim h\rbr{v_{n}} 
      = \lim \varphi \rbr{v_{n}} 
      \geq \varphi \rbr{v}.
    \end{math}
    Hence, 
    we have established that 
    \begin{equation}
      \label{eq:hphi-geq}
      \rbr{\forall v \in \dom h} \quad
      h\rbr{v} \geq \varphi \rbr{v} .
    \end{equation}
    Next, let us show that 
    \begin{equation}
      \label{eq:hphi-leq}
      \rbr{\forall w \in \dom \varphi} \quad
      h\rbr{w} \leq \varphi\rbr{w} .
    \end{equation}
    To this end,
    let $w \in \dom \varphi$.
    Then, since $\gra\rbr{\varphi +\iota_{\ran T}} $
    is a dense subset of $\gra \varphi$
    by assumption,
    there exists a sequence $\fa{w_{n}}{n \in \NN} $
    in $\ran T$ such that $w_{n}\to w$
    and $\varphi\rbr{w_{n}} \to \varphi\rbr{w} $.
    In turn, since $h$ is
    lower semicontinuous by \cref{eq:infor-gconj},
    we infer from \cref{eq:todo-prox}
    that 
    \begin{math}
      \varphi\rbr{w} =
      \lim \varphi\rbr{w_{n}} 
      = \lim h\rbr{w_{n}} 
      \geq h\rbr{w} ,
    \end{math}
    from which \cref{eq:hphi-leq} follows.
    Consequently, combining
    \cref{eq:hphi-geq} and \cref{eq:hphi-leq}
    yields $h=\varphi$.
    Finally, 
    since $\varphi \in \varGamma_{0}\rbr{\HH}$,
    it follows from \cref{e:info-g},
    the definition of $h$, 
    the Fenchel--Moreau theorem, 
    and \cite[Proposition~24.4]{bauschke2017convex}
    that 
    \begin{math}
      \prox_{\varphi} = 
      \nabla \rbr{\varphi + \q}^{\ast}
      =\nabla \rbr{h + \q}^{\ast}
      = \nabla g^{\ast\ast}
      =\nabla g
      = T,
    \end{math}
    as desired.
\end{proof}

In \cite{zarontello1971projections-partI},
Zarantonello provided
a necessary and sufficient 
condition in terms of
a differential equation
for an operator on $\HH$
to be a projector. 
The proof there, however,
is not within the 
scope of Convex Analysis.  
He also
conjectured 
(see the paragraph
after \cite[Corollary~2, p.~306]{zarontello1971projections-partI})
that 
the Fr\'{e}chet differentiability
of the operator $P$ 
in \cite[Theorem~4.1]{zarontello1971projections-partI}
can be replaced by the G\^{a}teaux one.
By assuming 
the monotonicity of $P$ instead
of the 
Lipschitz continuity, 
we  provide below an
affirmative answer.
The next result,
which plays  a crucial role
in determining
whether a sum of
projectors is a projector 
(see \cref{t:sum-many} below), 
is 
a variant of \cite[Theorem~4.1]{zarontello1971projections-partI}
with a proof
rooted in Convex Analysis.

\begin{theorem}[Characterization
  theorem for projectors]\label{t:main}
	Let $T\colon \HH \to \HH,$
	and set $f \coloneqq \q \circ \rb{\Id -T}$.
	Then the following are equivalent:
	\begin{enumerate}
		\item\label{i:main1} 
              $T \in \prH$.
		\item\label{i:main2}
		$T$ is monotone, $f$ is 
		G\^{a}teaux differentiable
		on $\HH$, 
		and $\nabla f = \Id - T$.
	\end{enumerate}
	If \cref{i:main1} or \cref{i:main2} holds,
	then 
	$\ran{T}$ is closed and convex, 
	$T = P_{\ran{T}}$, and $f = \rb{1/2}d_{\ran{T}}^{2}$
	is Fr\'{e}chet differentiable on $\HH$.
\end{theorem}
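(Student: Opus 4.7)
The strategy is to apply \cref{t:prox-charac} with $\varphi = \iota_C$ for a suitable nonempty closed convex set $C$, exploiting the identity $P_C = \prox_{\iota_C}$.

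\emph{Forward direction (i)$\Rightarrow$(ii).} If $T = P_C$ for some nonempty closed convex $C \subseteq \HH$, then $\iota_C \circ T \equiv 0$, so the function $\iota_C \circ T + \q \circ (\Id - T)$ coincides with $f$. The implication (i)$\Rightarrow$(ii) of \cref{t:prox-charac} applied with $\varphi = \iota_C \in \varGamma_0(\HH)$ then delivers the monotonicity of $T$, the Fr\'echet (hence G\^ateaux) differentiability of $f$ with $\nabla f = \Id - T$, and $f = \env \iota_C = \tfrac{1}{2}d_C^2$. Moreover, $\ran T = C$ is closed and convex, and $T = P_{\ran T}$.

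\emph{Reverse direction (ii)$\Rightarrow$(i).} The plan is to set $C \coloneqq \overline{\ran T}$, $\varphi \coloneqq \iota_C$, and to invoke \cref{t:prox-charac} to conclude $T = \prox_{\iota_C} = P_C$. Following the first part of the proof of \cref{t:prox-charac}, setting $g \coloneqq \q - f$ produces, via the monotonicity of $\nabla g = T$ and \cite[Propositions~17.7 and~17.48(i)]{bauschke2017convex}, a function $g \in \varGamma_0(\HH)$ that is G\^ateaux differentiable on $\HH$ with $\nabla g = T$; in particular, $g^* \in \varGamma_0(\HH)$.

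\emph{Main obstacle.} The delicate nontrivial point is to verify that $C = \overline{\ran T}$ is convex, so that $\iota_C \in \varGamma_0(\HH)$. Because $T = \nabla g = \partial g$, the Fenchel--Young identity yields $\ran T = \dom \partial g^*$, and the Br{\o}ndsted--Rockafellar theorem (cf.\ \cite[Proposition~16.38]{bauschke2017convex}) gives
\begin{equation}
\overline{\ran T} = \overline{\dom \partial g^*} = \overline{\dom g^*},
\end{equation}
which is convex as the closure of a convex set. Once this is in place, the remaining hypotheses of \cref{t:prox-charac}(ii) are immediate: $\iota_C \circ T \equiv 0$ gives $\varphi \circ T + \q \circ (\Id - T) = f$; the monotonicity of $T$ and the differentiability of $f$ with $\nabla f = \Id - T$ are hypotheses; and $\gra(\iota_C + \iota_{\ran T}) = \ran T \times \{0\}$ is dense in $\gra \iota_C = C \times \{0\}$ by the very definition of $C$. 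Consequently, \cref{t:prox-charac}(ii)$\Rightarrow$(i) yields $T = P_C \in \prH$, and the remaining assertions transfer from the forward direction.
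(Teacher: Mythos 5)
Your proof is correct and follows the paper's architecture almost exactly: both directions are reduced to \cref{t:prox-charac} with $\varphi = \iota_{\cran T}$, the graph-density hypothesis holds automatically because $\ran T$ is dense in $\cran T$, and the final assertions ($\ran T$ closed and convex, $T = P_{\ran T}$, $f = \rb{1/2}d_{\ran T}^{2}$ Fr\'echet differentiable) are read off from the forward application of \cref{t:prox-charac}. The one place where you genuinely diverge is the delicate step of the backward direction, the convexity of $\cran T$: the paper deduces it by noting that $T = \nabla\rb{\q - f}$ is maximally monotone (Moreau's theorem applied to $\q - f \in \varGamma_{0}\rb{\HH}$) and then invoking the convexity of the closure of the range of a maximally monotone operator \cite[Corollary~21.14]{bauschke2017convex}, whereas you stay entirely within convex conjugation: with $g = \q - f \in \varGamma_{0}\rb{\HH}$ and $\partial g = \nabla g = T$, you use $\rb{\partial g}^{-1} = \partial g^{\ast}$ to get $\ran T = \dom \partial g^{\ast}$ and then Br{\o}ndsted--Rockafellar to conclude $\cran T = \cdom g^{\ast}$, which is convex. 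Both justifications are sound, and the ingredients of yours (Corollary~16.30, Proposition~17.31(i), Proposition~16.38 of \cite{bauschke2017convex}) already appear in the paper's proof of \cref{t:prox-charac}; your variant buys a slightly more elementary route that avoids the maximal-monotonicity detour, while the paper's buys a one-line citation.
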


\begin{proof}
  Set $\varphi \coloneqq \iota_{\cran T}$.

	``\cref{i:main1}\ensuremath{\implies}\cref{i:main2}'': 
      Suppose that $T= P_{C}$,
      where $C$ is convex, closed,
      and nonempty.
      Then
	 clearly $   \ran{T} =
	\ran{P_{C}} = C$
	is closed and convex. 
      This implies that $\varphi = \iota_{\ran T} \in 
      \varGamma_{0}\rbr{\HH}$
      and that 
      $T = P_{\ran T} = \prox_{\iota_{\ran T}} 
      = \prox_{\varphi}$.
      In turn, 
      because $f = \varphi \circ T + \q \circ \rbr{
      \Id - T}$
      by the definition of $\varphi$, 
      we infer from \cref{t:prox-charac}
      that \cref{i:main2} holds 
      and, moreover, $f= \env \varphi 
      =\env \iota_{\ran T} = \rbr{1/2}d_{\ran T}^{2}$
    is Fr\'{e}chet differentiable on $\HH$.

    ``\cref{i:main1}\ensuremath{\impliedby}\cref{i:main2}'':
    We first show that 
    $\cran T$ is convex.
    Indeed,
    by our assumption,
    $\q - f$ is G\^ateaux  differentiable
    on $\HH$ with 
    \begin{equation}
      \label{eq:diff-qf}
    \nabla \rbr{\q - f}  = \nabla \q - \nabla f 
    = \Id - \rbr{\Id - T}  = T.
  \end{equation}
    Thus, since $T$ is monotone,
    \cite[Proposition~17.7]{bauschke2017convex}
    ensures that $\q - f$ is convex,
    and thus, 
    the G\^ateaux
differentiability of $\q -f $ 
and 
\cite[Proposition~17.48(i)]{bauschke2017convex}
    imply that $\q - f \in \varGamma_{0}\rbr{\HH} $.
    Hence, due to \cref{eq:diff-qf}
    and \cite[Proposition~17.31(i)]{bauschke2017convex},
     Moreau's theorem \cite{Moreau-convex-1966}
    asserts that $T =\nabla \rbr{\q -f} $
    is maximally monotone.
    Consequently, 
    \cite[Corollary~21.14]{bauschke2017convex}
    yields the convexity of $\cran T$,
    as claimed.
    In turn,
    on the one hand, 
    this implies that $\varphi =\iota_{\cran T} \in 
    \varGamma_{0}\rbr{\HH}$. 
    On the other hand, 
    we deduce 
    from the definition of $\varphi$
    that 
    $ f = \varphi \circ T + \q \circ \rbr{\Id - T} $
    and 
    $\gra\rbr{\varphi + \iota_{\ran T}}  = \gra\rbr{\iota_{\cran T \cap \ran T}} 
    = \gra \iota_{\ran T} = \ran T \times \left\{ 0 \right\}$
    is a dense subset of 
    $\cran T \times \left\{ 0 \right\}
    = \gra \iota_{\cran T}
    = \gra \varphi$.
    Thus, the implication 
    ``\cref{i:prox2}\ensuremath{\implies}\cref{i:prox1}''
    of \cref{t:prox-charac}
    and our assumption
    guarantee that 
    $T = \prox_{\varphi} = \prox_{\iota_{\cran T}} = P_{\cran T}$,
  which completes the proof.
\end{proof}

\begin{remark}
	Consider the  
	implication 
	``\cref{i:main2}\ensuremath{\implies}\cref{i:main1}''
	of \cref{t:main}. 
	If we merely assume that 
	$T$ is defined on a proper
	open subset $D$ of $\HH$,
	then, although
	there may exist 
	a closed set $C$
	such that $T$ is the
	restriction to $D$ of the 
	projector onto $C$,
	the set $C$ may fail to be convex.
	An example can be constructed as follows.  
	Suppose that $\HH\neq \left\{0\right\}$, 
	and set  
	\begin{equation}
	T \colon \HH\smallsetminus \left\{ 0 \right\} \to \HH: 
	x \mapsto \frac{x}{\norm{x}}, 
	\quad 
	f \coloneqq \q \circ \rb{\Id - T},
	\quad 
	\text{and~} C \coloneqq \menge{x \in \HH}{ \norm{x} = 1}, 
	\end{equation}
	i.e., $C$ is the unit sphere of $\HH$.
	Then clearly $C$ is a closed nonconvex set and  $T$
	is the restriction to $\HH\smallsetminus \left\{0\right\}$ 
	of the set-valued projector 
	$P_{C}$.
	Thus, in the light of 
	\cite[Example~20.12]{bauschke2017convex}, $T$ is monotone. 
	Next, since 
	\begin{math}
	\rb{\forall x \in \HH \smallsetminus \left\{0\right\}}
      \,
	f\rb{x} = \rb{1/2}\norm{\rb{1-1/\norm{x}}x }^{2}
	= \rb{1/2}\rb{\norm{x} -1}^{2}
	= \q \rb{x} - \norm{x} +1/2,
	\end{math}
	we infer that $f$ is Fr\'{e}chet
	differentiable on $\HH \smallsetminus \left\{0\right\}$
	and 
	\begin{equation}
	\rb{\forall x \in \HH\smallsetminus\left\{0\right\}}
	\quad 
	\nabla f \rb{x} = x - \frac{x}{\norm{x}} = x - Tx.
	\end{equation}
\end{remark}

\begin{openprob}\label{rem:open-prob}
	We do not know whether the monotonicity
	of $T$ can be omitted in
	\cref{t:main}. 
	Nevertheless, 
    on
    the one hand,
	the following
	remark might 
	be useful in finding
	counterexamples
	if one thinks 
	the answer is negative;
    on the other hand,
  \cref{p:FixT-nonempty}
provides information on
the set $\Fix T$ in the absence of 
monotonicity.
\end{openprob}

\begin{remark}[{\cite{Cosner}}] 
\label{rem:R2}
	Consider the setting of \cref{t:main}
	and suppose that $\HH = \RR^{2}$. 
	Set $F \coloneqq  \Id -T$
      and $\rb{\forall \rb{x,y } \in \HH}\,
	F\rb{x,y} \coloneqq \rb{F_{1}\rb{x,y},F_{2}\rb{x,y}}$. 
	Now assume that $f$ is
	Fr\'{e}chet differentiable on $\HH$
	with $\nabla f = \Id -T = F$;
	in addition, suppose that 
      $F_{1}$ and $F_{2}$ are continuously
      differentiable. 
	Then,
	since $\rb{F_{1},F_{2}} = \nabla f$,
	it follows that $\partial f / \partial x = F_{1}$
	and that $\partial f / \partial y = F_{2}$.
	Hence, due to Schwarz's theorem 
	(see, e.g., \cite[Theorem~4.1]{coleman2012calculus}), 
	\begin{equation}
		\frac{ \partial F_{1} }{ \partial y  }
		 = \frac{\partial^{2}f }{ \partial y \partial x }
		 = \frac{\partial^{2}f }{ \partial x \partial y }
		 =\frac{ \partial F_{2} }{ \partial x  }.
	\end{equation}
	However, because $\nabla f =  F$, 
	a direct computation gives 
	\begin{equation}\label{e:pde-system}
	\left\{ 	\begin{array}{l}
	\displaystyle 	
	F_{1}\rb{x,y} = F_{1}\rb{x,y} \frac{\partial F_{1} }{\partial x}\rb{x,y}	
	+ F_{2}\rb{x,y} \frac{\partial F_{2} }{\partial x}\rb{x,y} 
	= F_{1}\rb{x,y} \frac{\partial F_{1} }{\partial x}\rb{x,y}	
	+ F_{2}\rb{x,y} \frac{\partial F_{1} }{\partial y}\rb{x,y}
	\medskip \\
	\displaystyle 	F_{2}\rb{x,y} 
	= F_{1}\rb{x,y} \frac{\partial F_{1} }{\partial y}\rb{x,y}	
	+ F_{2}\rb{x,y} \frac{\partial F_{2} }{\partial y}\rb{x,y}
	= F_{1}\rb{x,y} \frac{\partial F_{2} }{\partial x}\rb{x,y}	
	+ F_{2}\rb{x,y} \frac{\partial F_{2} }{\partial y}\rb{x,y}.
	\end{array}
	\right. 
	\end{equation}
	In the first equation of \cref{e:pde-system}, 
	one can try to solve for $F_{1}$
	in term of $F_{2}$, and vise versa by
	using the second one.
	This 
	approach recovers
	projectors onto linear subspaces
	of $\RR^{2}$
	and 
	might suggest 
	a nonmonotone solution of
	the equation
	$\nabla f = \Id - T.$
	In addition,  
	it is worth noticing
	that the function 
	$g \colon x \mapsto \norm{x-Tx}$
	satisfies 
	the eikonal equation 
	(see, e.g., \cite{Bardi-Dolcetta-1997}),
	i.e., 
	\begin{math}
		\rb{\forall x \in \HH \smallsetminus 
            \closu{\Fix{T}}}\,
		\norm{\nabla g\rb{x}} =1.
	\end{math}
	This might give us
	some
	insights
	into 
	\cref{rem:open-prob}.
\end{remark}

\begin{proposition}\label{p:FixT-nonempty}
	Let $T\colon \HH\to \HH$,
	and set $f \coloneqq \q \circ \rb{\Id -T}$. 
	Suppose that 
	$f$ is Fr\'{e}chet differentiable
	on $\HH$\
	with $\nabla f = \Id -T$.
	Then $\Fix T \neq \varnothing.$
\end{proposition}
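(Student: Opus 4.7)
The plan is to combine Ekeland's variational principle with the Fréchet subdifferential formula of \cref{l:F-subdif} in order to construct iteratively a Cauchy sequence whose limit lies in $\Fix T$. The starting observation is that $f = \q \circ \rb{\Id - T} \geq 0$, and so, together with the hypothesis $\nabla f = \Id - T$, we obtain the crucial identity
\begin{equation}
\rb{\forall x \in \HH} \quad 2f\rb{x} = \norm{\nabla f\rb{x}}^{2}.
\end{equation}
In particular $f$ is bounded below by $0$, and since it is Fréchet differentiable on $\HH$, it is continuous on $\HH$.

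Starting from an arbitrary $x_{0} \in \HH$, I would iterate as follows: if $f\rb{x_{n}} = 0$ then already $x_{n} \in \Fix T$; otherwise let $\alpha_{n} \coloneqq f\rb{x_{n}} > 0$. Since $\inf f \geq 0$ implies $f\rb{x_{n}} \leq \inf f + \alpha_{n}$, Ekeland's variational principle applied with parameters $\varepsilon = \alpha_{n}$ and $\lambda = \sqrt{\alpha_{n}}$ produces $x_{n+1} \in \HH$ satisfying $\norm{x_{n+1} - x_{n}} \leq \sqrt{\alpha_{n}}$, $f\rb{x_{n+1}} \leq \alpha_{n}$, and such that $x_{n+1}$ is a global minimizer of $z \mapsto f\rb{z} + \sqrt{\alpha_{n}}\, d_{\left\{x_{n+1}\right\}}\rb{z}$. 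Fermat's rule for the Fréchet subdifferential and \cref{l:F-subdif} then yield
\begin{equation}
0 \in \hat{\partial}\rb{f + \sqrt{\alpha_{n}}\, d_{\left\{x_{n+1}\right\}}}\rb{x_{n+1}} = \nabla f\rb{x_{n+1}} + \sqrt{\alpha_{n}}\, \ball{0}{1},
\end{equation}
whence $\norm{\nabla f\rb{x_{n+1}}} \leq \sqrt{\alpha_{n}}$; the key identity then forces $\alpha_{n+1} = \tfrac{1}{2}\norm{\nabla f\rb{x_{n+1}}}^{2} \leq \alpha_{n}/2$.

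By induction, $\alpha_{n} \leq \alpha_{0}/2^{n}$ and $\norm{x_{n+1} - x_{n}} \leq \sqrt{\alpha_{0}}/\rb{\sqrt{2}}^{n}$, which is summable. Therefore $\rb{x_{n}}_{n \in \NN}$ is Cauchy in the complete space $\HH$ and converges to some $x_{\ast} \in \HH$; continuity of $f$ yields $f\rb{x_{\ast}} = \lim_{n}f\rb{x_{n}} = 0$, equivalently $\norm{x_{\ast} - T x_{\ast}}^{2} = 2f\rb{x_{\ast}} = 0$, so $x_{\ast} \in \Fix T$. The main obstacle is the calibration of Ekeland's parameters $\rb{\varepsilon, \lambda}$: the choice $\lambda = \sqrt{\alpha_{n}}$ is what makes the step-size bound (giving summability) and the gradient bound (which feeds back into $\alpha_{n+1}$ via $2f = \norm{\nabla f}^{2}$) mutually compatible and geometrically self-improving; without the hypothesis linking $f$ to $\norm{\nabla f}^{2}$, this two-sided control would collapse.
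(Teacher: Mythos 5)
Your proof is correct, and it reaches the conclusion by a route that is genuinely different from the paper's, even though both rest on the same two tools: Ekeland's variational principle together with \cref{l:F-subdif} and the generalized Fermat rule for the Fr\'echet subdifferential. The paper argues by contradiction: assuming $\Fix T=\varnothing$, it introduces $g\coloneqq\sqrt{\,\cdot\,}\circ\rb{2f}$, notes that $\nabla g\rb{x}=\rb{x-Tx}/\norm{x-Tx}$ has norm one everywhere (the eikonal property), and then a \emph{single} application of Ekeland with penalty $\varepsilon\in\left]0,1\right[$ produces a point at which $0\in\nabla g\rb{z}+\varepsilon\ball{0}{1}$, which is impossible. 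You instead work with $f$ itself, exploiting the identity $2f\rb{x}=\norm{x-Tx}^{2}=\norm{\nabla f\rb{x}}^{2}$ (immediate from $f=\q\circ\rb{\Id-T}$ and the hypothesis $\nabla f=\Id-T$; the nonnegativity of $f$ is not what drives it), and you apply Ekeland iteratively with the calibration $\rb{\varepsilon,\lambda}=\rb{\alpha_{n},\sqrt{\alpha_{n}}}$, which is legitimate since $\inf f\geq 0$ gives $f\rb{x_{n}}\leq\inf f+\alpha_{n}$; the Fermat rule and \cref{l:F-subdif} (applicable with $\varepsilon=\sqrt{\alpha_{n}}>0$ because you stop as soon as $\alpha_{n}=0$) then yield $\norm{\nabla f\rb{x_{n+1}}}\leq\sqrt{\alpha_{n}}$, hence $\alpha_{n+1}\leq\alpha_{n}/2$, while the Ekeland step bound $\norm{x_{n+1}-x_{n}}\leq\sqrt{\alpha_{n}}$ makes the iterates Cauchy, and continuity of $f$ (from Fr\'echet differentiability) identifies the limit as a fixed point. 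What your argument buys is a direct, constructive proof with quantitative geometric decay of $f\rb{x_{n}}$, and it avoids differentiating the square root; what the paper's buys is brevity, one application of Ekeland and no limiting process, at the price of being nonconstructive. Neither argument uses monotonicity or continuity of $T$, so the hypotheses are used in the same minimal way.
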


\begin{proof}
	Let us proceed by contradiction
	and therefore assume that 
	$\Fix T = \varnothing.$
      Then clearly $\rb{\forall x \in \HH}\,f\rb{x} > 0$.
	Hence, 
	because $f \colon \HH \to \RPP$
	is Fr\'{e}chet differentiable
	with $\nabla f = \Id - T$ 
	and $\sqrt{\, \cdot \,} \colon \RPP\to \RR$
	is Fr\'{e}chet differentiable, 
	we
	deduce 
	from \cite[Theorem~5.1.11(b)]{DenkowskiMigorskiPapageorgiou-nonlinear}
	that $g \coloneqq \sqrt{\, \cdot \,} \circ \rb{2f} $
	is Fr\'{e}chet differentiable
	on $\HH$
	(thus continuous)
	and 
	\begin{equation}\label{e:Ekeland-grad-g}
	\rb{\forall x \in \HH}
	\quad 
	\nabla g\rb{x} = 
	\frac{2\nabla f\rb{x}}{2\sqrt{2f\rb{x}}}
	=	\frac{x - Tx}{\norm{x - Tx}}.
	\end{equation}
	Now let $\varepsilon \in \sqbo{0,1 }$. 
	Since $g$ is bounded below
	and continuous, 
	Ekeland's variational 
	principle
	(see, e.g., \cite[Theorem~1.46(iii)]{bauschke2017convex})
	applied to $g$ and 
	$\rb{\alpha ,\beta} = \rb{\varepsilon^{2},\varepsilon}$
	yields the existence 
	of $z \in \HH$
      such that $\rb{\forall x \in \HH\smallsetminus\{z\}}\,
	g\rb{z} + \varepsilon d_{\left\{z\right\} }\rb{z} = g\rb{z} <   g\rb{x} + \varepsilon d_{\left\{z\right\}}\rb{x} $.
	This guarantees that 
	$z$ is the unique minimizer
	of $g + \varepsilon d_{\left\{z\right\}}$.
	Thus, 
	\cite[Proposition~1.114]{Boris-I}, 
	\cref{l:F-subdif},
	and \cref{e:Ekeland-grad-g} imply that  
	\begin{equation}
	0 \in 
	\hat{\partial}\rb{g + \varepsilon  d_{ \left\{z\right\}}}\rb{z} 
	= 
	\nabla g\rb{z} + \varepsilon \ball{0}{1}
	= \frac{z-Tz}{\norm{z-Tz}} + \varepsilon \ball{0}{1}, 
	\end{equation}
	which is absurd since $\varepsilon \in \sqbo{0,1}$
	and $\norm{\rb{z-Tz}/ \rb {\norm{z-Tz}}} = 1 $.
\end{proof}

\begin{remark}
	Consider the setting 
	and the assumption of \cref{p:FixT-nonempty}.
	\begin{enumerate}
		\item Zarantonello
		established in 
		the proof of 
		\cite[Theorem~4.1]{zarontello1971projections-partI}
		that,  
		if (in addition to our assumption) 
		$T$ is Lipschitz continuous,
		then 
		$\Fix{T} \neq \varnothing$. 
		However, we do not
		need 
		the Lipschitz continuity of $T$
		in our proof. 
		\item 	Suppose, in addition,
		that $\nabla f$ is continuous.
		Then we obtain 
		an alternative proof
		as 
		follows.
            Assume
		to the contrary 
		that $\Fix T = \varnothing.$
		Then $g \coloneqq \sqrt{\, \cdot \,} \circ \rb{2f}$
		is continuously Fr\'{e}chet differentiable
		on $\HH$ (hence continuous)
		with 
		\begin{equation}\label{e:Ekeland-grad-g2}
		\rb{\forall x \in \HH}
		\quad 
		\nabla g\rb{x} 
		=	\frac{x - Tx}{\norm{x - Tx}}.
		\end{equation}
		Fix $\varepsilon \in \sqbo{0,1}$. 
		Since $g$ is bounded below
		and continuous,
		Ekeland's variational principle
		implies that
		there exists $z \in \HH$
		such that
            $\rb{\forall x \in \HH\smallsetminus\{z\}}\,
		g\rb{z} + \varepsilon d_{\left\{z\right\}}\rb{z} = g\rb{z} <   g\rb{x} + \varepsilon d_{\left\{z\right\}}\rb{x} $.
		Thus, $z$ is a
		minimizer of $g + \varepsilon d_{\left\{z\right\}}\rb{z}$.
		Therefore, 
		because $d_{\left\{z\right\}}$ is convex, 
		in view of \cite[Theorem~3.2.4(iii)\&(vi)\&(ii)]{Zalinescu-book2002}
		and \cite[Example~16.62]{bauschke2017convex},
		we see that 
		\begin{equation}
		0 \in \nabla g\rb{z} + \varepsilon \partial d_{\left\{z\right\}}\rb{z}
		=
		\nabla g\rb{z} + \varepsilon \ball{0}{1}
		= \frac{z-Tz}{\norm{z-Tz}} + \varepsilon \ball{0}{1}, 
		\end{equation}
		which contradicts
		the fact that $\varepsilon \in  \sqbo{0,1}. $
	\end{enumerate}

\end{remark}

By specializing
\cref{t:main}
to  positively 
homogeneous operators 
on $\HH$, 
we obtain
a characterization
for projectors
onto closed convex cones.

\begin{corollary}\label{c:Pr-cone}
	Let $T \colon \HH \to \HH$
	and set 
	$f \coloneqq \q \circ T$.
	Then the following are
	equivalent: 
	\begin{enumerate}
		\item\label{i:cone1} There exists 
		a nonempty closed convex cone $K$
		such that $T = P_{K}$.
		\item\label{i:cone2} $T$ is monotone and positively
		homogeneous, $f$ is
		G\^{a}teaux differentiable
		on $\HH$,
		and $\nabla f = T$.
	\end{enumerate}
	If \cref{i:cone1} or \cref{i:cone2} holds,
	then  
	$K =  \ran{T}$.
\end{corollary}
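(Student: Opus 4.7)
The plan is to derive \cref{c:Pr-cone} from \cref{t:main} by passing between the two quadratic expressions $f = \q \circ T$ and $\tilde f \coloneqq \q \circ (\Id - T)$. The bridge is the algebraic identity $\tilde f(x) = \q(x) - \scal{x}{Tx} + \q(Tx)$, valid for any $T$. When $T = P_K$ for a cone $K$, \cref{l:cone-identities}\cref{i:cone-identities1} gives $\scal{x}{Tx} = \norm{Tx}^2 = 2f(x)$, so this identity collapses to $\tilde f = \q - f$. In the reverse direction, that same identity $\norm{Tx}^2 = \scal{x}{Tx}$ has to be \emph{recovered} from the hypothesis, and the tools for that are exactly \cref{l:ph-mono-T0} and \cref{l:derivative}, which are tailor-made for positively homogeneous monotone operators.

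For the implication \cref{i:cone1}\ensuremath{\implies}\cref{i:cone2}: standard facts give that $T = P_K$ is monotone and (because $K$ is a cone) positively homogeneous, and the ``moreover'' part of \cref{t:main} tells us that $\tilde f = (1/2) d_K^2$ is Fr\'echet (hence G\^ateaux) differentiable with $\nabla \tilde f = \Id - T$. Using the identity above together with \cref{l:cone-identities}\cref{i:cone-identities1}, I conclude $f = \q - \tilde f$, so $f$ is G\^ateaux differentiable with $\nabla f = \Id - (\Id - T) = T$.

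For the implication \cref{i:cone2}\ensuremath{\implies}\cref{i:cone1}: since $T = \nabla f$ is monotone and positively homogeneous, \cref{l:ph-mono-T0} yields $T0 = 0$, hence $f(0) = \q(T0) = 0$, and \cref{l:derivative} applied to $f$ gives
\begin{equation}
  f(x) = \tfrac{1}{2}\scal{x}{Tx} \quad \text{for all } x \in \HH.
\end{equation}
Rewriting this as $\scal{x}{Tx} = \norm{Tx}^2$ and plugging into $\tilde f(x) = \q(x) - \scal{x}{Tx} + \q(Tx)$, I again obtain $\tilde f = \q - f$; therefore $\tilde f$ is G\^ateaux differentiable with $\nabla \tilde f = \Id - T$. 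The hypotheses of \cref{t:main}\cref{i:main2} are thus met, so $T = P_{\ran T}$ with $\ran T$ closed and convex. Positive homogeneity of $T$ together with $T0 = 0$ then upgrades $\ran T$ to a cone, and we may take $K \coloneqq \ran T$, which also establishes the final ``$K = \ran T$'' claim.

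I do not anticipate any serious obstacle: the structural work has been front-loaded into \cref{t:main}, \cref{l:cone-identities}, \cref{l:ph-mono-T0}, and \cref{l:derivative}. The only point that deserves a moment of care is verifying that $\ran T$ is a cone (one checks $\ran T = T(\HH) \ni \lambda T x = T(\lambda x)$ for $\lambda > 0$, and $0 = T0 \in \ran T$), and that the identity $\tilde f = \q - f$ is exploited symmetrically in both directions rather than re-derived from scratch each time.
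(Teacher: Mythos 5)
Your proof is correct and follows essentially the same route as the paper: the key reverse implication uses \cref{l:ph-mono-T0} and \cref{l:derivative} to obtain $\scal{x}{Tx}=\norm{Tx}^{2}$, rewrites $\q\circ(\Id-T)$ as $\q-f$, and then invokes \cref{t:main} together with positive homogeneity to conclude that $\ran T$ is a closed convex cone with $T=P_{\ran T}$, exactly as in the paper. The only (immaterial) difference is in the forward direction, where the paper differentiates $\q\circ P_{K}$ directly via the chain-rule facts of \cite{bauschke2017convex}, while you route through the ``moreover'' clause of \cref{t:main} and \cref{l:cone-identities}\cref{i:cone-identities1}.
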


\begin{proof}
	``\cref{i:cone1}\ensuremath{\implies}\cref{i:cone2}'':
	Clearly $\ran{T} = \ran P_{K} = K$.
	Now,
	it follows from \cite[Example~20.32]{bauschke2017convex}
	that $T=P_{K}$ is monotone.
	Next, because $K$ is a nonempty closed convex cone,
	\cite[Proposition~29.29]{bauschke2017convex}
	guarantees that $T$ is positively
	homogeneous. In turn, since $f = \q \circ T = \q \circ P_{K}$, 
	\cite[Proposition~12.32
	and Lemma~2.61(i)]{bauschke2017convex}
	yield the G\^{a}teaux differentiability of $f$
	and, moreover, 
	$\nabla f = \nabla\rb{\q \circ P_{K}} = P_{K} =T$,
	as desired.

	``\cref{i:cone1}\ensuremath{\impliedby}\cref{i:cone2}'':
	First,
	since $T$ is
	positively
	homogeneous, 
	\begin{equation}\label{e:ranT-cone}
	\text{$\ran{T}$
		is a cone in $\HH$.}
	\end{equation}
	Now  
	set $g \colon \HH\to \RR : x \mapsto \rb{1/2}\scal{x}{Tx} 
	= \rb{1/2}\scal{x}{\nabla f\rb{x}}$
	and 
	\begin{math}
	h \coloneqq \q \circ \rb{\Id -T}.
	\end{math}
	Since $\nabla f = T$
	is monotone and positively homogeneous
	by assumption,
	\cref{l:derivative} ensures that
	$g$ is G\^{a}teaux differentiable
	on $\HH$ and 
	$\nabla g = \nabla f = T$.
	Thus, because 
	\begin{math}
	h = \q -2g + f,
	\end{math}
	it follows that $h$ is G\^{a}teaux
	differentiable on $\HH$ with gradient
	$\nabla h = \nabla \q -2\nabla g + \nabla f
	= \Id - 2T + T = \Id - T$.
	Consequently, since $T$ is monotone,
	we conclude via \cref{t:main}
	(applied to $h$)
	and \cref{e:ranT-cone} that
	$\ran{T}$ is a closed convex cone 
	in $\HH$ and that 
	$T = P_{\ran{T}}$.
\end{proof}

In \cref{c:Pr-cone},
if $T$ is a bounded linear
operator,
then we recover
the following 
characterization
of orthogonal projectors. 
For an alternative proof, 
which is based on
the orthogonal
decomposition $\HH = V \oplus V^{\perp}$,
where $V$ is a closed linear subspace
of $\HH$, 
see, e.g., 
\cite[Theorem~4.29]{weidmann2012}.

\begin{corollary}\label{c:pr-subspaces}
	Let $L \colon \HH\to \HH$.
	Then the following are equivalent:
	\begin{enumerate}
		\item\label{i:subspace1} There exists a
		closed linear subspace $V$ of $\HH$
		such that $L = P_{V}$.
		\item\label{i:subspace2} $L \in \mathscr{B}\rb{\HH}$ and 
		$L = L^{\ast} = L^{2}$.
		\item\label{i:subspace3}$L \in \mathscr{B}\rb{\HH}$
		and $L = L^{\ast}L$.
	\end{enumerate}
	If 
	one of 
	\cref{i:subspace1,i:subspace2,i:subspace3} holds,
	then $V = \ran{L}$.
\end{corollary}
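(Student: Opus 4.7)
The plan is to establish the cycle \cref{i:subspace1}$\implies$\cref{i:subspace2}$\implies$\cref{i:subspace3}$\implies$\cref{i:subspace1}, where the first two implications are essentially routine Hilbert-space computations and the last one leverages \cref{c:Pr-cone}.

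For ``\cref{i:subspace1}\ensuremath{\implies}\cref{i:subspace2}'', I would observe that if $V$ is a closed linear subspace then $P_{V}$ is linear (the projection theorem gives the affine structure), nonexpansive (hence bounded), and satisfies $P_{V}^{2}=P_{V}$ since $\ran P_{V}=V$ and $P_{V}$ fixes $V$. Self-adjointness follows from the orthogonal decomposition: for all $x,y \in \HH$, $\scal{P_{V}x}{y}=\scal{P_{V}x}{P_{V}y+(y-P_{V}y)}=\scal{P_{V}x}{P_{V}y}=\scal{x}{P_{V}y}$ since $x-P_{V}x$ and $y-P_{V}y$ lie in $V^\perp$. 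The implication ``\cref{i:subspace2}\ensuremath{\implies}\cref{i:subspace3}'' is then immediate: $L^{\ast}L=L\cdot L=L^{2}=L$.

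The main implication is ``\cref{i:subspace3}\ensuremath{\implies}\cref{i:subspace1}''. The key first step is to deduce self-adjointness \emph{for free}: taking adjoints in $L=L^{\ast}L$ yields $L^{\ast}=(L^{\ast}L)^{\ast}=L^{\ast}L=L$. With this in hand, I would verify the three hypotheses of \cref{c:Pr-cone} applied to $T=L$ and $f=\q\circ L$. Positive homogeneity is trivial from linearity. Monotonicity follows from
\begin{equation}
\scal{L(x-y)}{x-y}=\scal{L^{\ast}L(x-y)}{x-y}=\norm{L(x-y)}^{2}\geq 0.
\end{equation}
For the derivative, $f(x)=\tfrac{1}{2}\norm{Lx}^{2}=\tfrac{1}{2}\scal{L^{\ast}Lx}{x}=\tfrac{1}{2}\scal{Lx}{x}$ is a continuous quadratic form, and expanding $f(x+h)-f(x)$ together with $L=L^{\ast}$ yields $\nabla f(x)=Lx$. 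Hence \cref{c:Pr-cone} supplies a nonempty closed convex cone $K$ with $L=P_{K}$ and $K=\ran L$.

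It remains to upgrade the cone $K=\ran L$ to a linear subspace. This is where linearity of $L$ is used: for any $y=Lx\in \ran L$, we have $-y=L(-x)\in \ran L$, so $\ran L=-\ran L$; combined with $\ran L$ being a convex cone this forces it to be closed under arbitrary real scalings and under addition, i.e., a linear subspace. Setting $V \coloneqq \ran L$ then gives $L=P_{V}$ with $V=\ran L$, which also yields the final assertion of the corollary. I do not anticipate any genuine obstacle; the only subtlety is remembering to extract self-adjointness from \cref{i:subspace3} before computing $\nabla f$.
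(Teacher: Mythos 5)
Your proposal is correct and follows essentially the same route as the paper: both reduce \cref{i:subspace3}\ensuremath{\implies}\cref{i:subspace1} to \cref{c:Pr-cone} by checking monotonicity, positive homogeneity, and $\nabla\rb{\q\circ L}=L$. The only cosmetic differences are that the paper gets $\nabla\rb{\q\circ L}=L^{\ast}L=L$ directly from the chain rule (so your preliminary deduction $L=L^{\ast}$ is not needed, though it is harmless) and leaves implicit the observation, which you spell out, that $\ran L$ is a linear subspace.
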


\begin{proof}
	``\cref{i:subspace1}\ensuremath{\implies}\cref{i:subspace2}'': 
	See, e.g., 
	\cite[Corollary~3.24(iii)\&(vi)]{bauschke2017convex}.
	Moreover,
	it is clear that $\ran L = \ran P_{V} = V$.
	
	``\cref{i:subspace2}\ensuremath{\implies}\cref{i:subspace3}'':
	Clear.
	
	``\cref{i:subspace3}\ensuremath{\implies}\cref{i:subspace1}'':
	On the one hand, 
	because  $L \in \mathscr{B}\rb{\HH}$, 
	we deduce from 
	\cite[Example~20.16(ii)]{bauschke2017convex}
	that $L = L^{\ast}L$ is monotone. 
	On the other hand, since $L \in \mathscr{B}\rb{\HH}$, 
	\cite[Example~2.60]{bauschke2017convex}
	and our assumption 
	imply that $\q \circ L $ is Fr\'{e}chet 
	differentiable on $\HH$ and
	$\nabla\rb{\q \circ L} = L^{\ast}L = L$. 
	Altogether, 
	because  
	$L$ is clearly positively 
	homogeneous, 
	we obtain the conclusion
	via \cref{c:Pr-cone}.
\end{proof}

  \begin{theorem}[Linear combination of projectors]
    \label{t:conic-comb}
    Let $\fa{C_{i}}{i \in I} $
    be a finite family of
    nonempty closed convex subsets
  of $\HH$,
    let $\fa{\alpha_{i}}{i \in I} $
    be a family in $\RR$, and
    set $\alpha \coloneqq \sum_{ i \in I}\alpha_{i}$.
    Then,
    there exists a closed
    convex set $C$ such that 
    $\sum_{ i \in I}\alpha_{i}P_{C_{i}} = P_{C}$
    if and only if
$\sum_{ i \in I}\alpha_{i}P_{C_{i}}$ is monotone
and 
\begin{equation}
  \label{eq:cond-conical}
  \rbr{\exists \gamma \in \RR} \rbr{\forall x \in \HH } \quad
\rbr{\alpha-1} \sum_{i \in I}\alpha_{i}\q\rbr{P_{C_{i}}x} 
      - \tfrac{1}{2}\sum_{i \in I}\sum_{j \in I}\alpha_{i}\alpha_{j}
      \q\rbr{P_{C_{i}}x - P_{C_{j}}x} = \gamma; 
\end{equation}
in which case,
\begin{equation}
  \label{eq:dist-general}
  d_{C}^{2} = \sum_{i \in I}\alpha_{i}d_{ C_{i} }^{2}
  - 2\rb{\alpha-1}\q +2\gamma. 
\end{equation}
  \end{theorem}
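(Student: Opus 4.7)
The plan is to apply the projector characterization \cref{t:main} to the operator $T \coloneqq \sum_{i \in I}\alpha_{i} P_{C_{i}}$, using the explicit expansion of $f \coloneqq \q \circ (\Id - T)$ provided by \cref{p:sumCi}\cref{i:sum-many1}. By \cref{t:main}, the existence of a closed convex set $C$ with $T = P_{C}$ is equivalent to $T$ being monotone together with $f$ being G\^ateaux differentiable on $\HH$ with $\nabla f = \Id - T$; moreover, in that case $C = \cran T$ and $f = \tfrac{1}{2}d_{C}^{2}$.

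By \cref{p:sumCi}\cref{i:sum-many1}, one has the decomposition $f = h + g$, where
\begin{equation*}
  h \coloneqq \tfrac{1}{2}\sum_{i \in I}\alpha_{i} d_{C_{i}}^{2} - (\alpha - 1)\q
\end{equation*}
and $g$ denotes the function whose value at $x$ is the left-hand side of \eqref{eq:cond-conical}. Since each $\tfrac{1}{2}d_{C_{i}}^{2} = \env \iota_{C_{i}}$ is Fr\'echet differentiable on $\HH$ with gradient $\Id - P_{C_{i}}$, the function $h$ is Fr\'echet differentiable with
\begin{equation*}
  \nabla h = \sum_{i \in I}\alpha_{i}(\Id - P_{C_{i}}) - (\alpha - 1)\Id = \Id - T.
\end{equation*}
Consequently, $f$ is G\^ateaux differentiable with $\nabla f = \Id - T$ if and only if $g = f - h$ is G\^ateaux differentiable with $\nabla g = 0$, which, by restricting to line segments and applying the one-variable fundamental theorem of calculus, is equivalent to $g$ being constant on $\HH$. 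This constancy is exactly condition \eqref{eq:cond-conical}.

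To close the equivalence, if $T = P_{C}$ then \cref{eg:PC-mono}\cref{i:Pc-maxmono} delivers the monotonicity of $T$, and \cref{t:main} yields the G\^ateaux differentiability of $f$ with $\nabla f = \Id - T$, forcing $g$ to be constant by the identity above. Conversely, monotonicity of $T$ together with \eqref{eq:cond-conical} supplies, via the decomposition, the full hypothesis of \cref{t:main}, from which $T = P_{C}$ with $C = \cran T$. Finally, since \cref{t:main} gives $f = \tfrac{1}{2}d_{C}^{2}$, equating $\tfrac{1}{2}d_{C}^{2} = h + \gamma$ and multiplying by $2$ produces \eqref{eq:dist-general}. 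The only mild obstacle is the implication $\nabla g = 0 \implies g$ constant on the (possibly infinite-dimensional) Hilbert space; everything else is an orchestration of \cref{t:main}, \cref{p:sumCi}\cref{i:sum-many1}, and the differential calculus of Moreau envelopes.
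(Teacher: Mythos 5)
Your proposal is correct and follows essentially the same route as the paper: decompose $f=\q\circ(\Id-T)$ via \cref{p:sumCi}\cref{i:sum-many1}, observe that the non-constant part has gradient $\Id-T$, and reduce everything to \cref{t:main}. The one point you flag (zero G\^ateaux gradient implies constancy) is indeed the same step the paper uses implicitly, and it is settled by the mean value theorem applied to $t\mapsto g\rbr{x+t\rbr{y-x}}$, whose derivative vanishes identically.
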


  \begin{proof}
    Set $T \coloneqq \sum_{i \in I}\alpha_{i}P_{C_{i}}$,
    set $f \coloneqq \q \circ \rbr{\Id - T}$,
    and define
    \begin{equation}
      \label{eq:defn-g-linear}
      g \colon \HH \to \RR : 
      x\mapsto 
\rbr{\alpha-1} \sum_{i \in I}\alpha_{i}\q\rbr{P_{C_{i}}x} 
      - \tfrac{1}{2}\sum_{i \in I}\sum_{j \in I}\alpha_{i}\alpha_{j}
      \q\rbr{P_{C_{i}}x - P_{C_{j}}x}.
    \end{equation}
    In view of \cref{p:sumCi}\cref{i:sum-many1},
    we have 
    \begin{equation}
      \label{eq:expand-f}
      \rbr{\forall x \in \HH}\quad 
      f\rbr{x} = \tfrac{1}{2}\sum_{ i \in I}\alpha_{i}d_{C_{i}}^{2}\rbr{x}
      - \rbr{\alpha-1}\q\rbr{x} + g\rbr{x}.
    \end{equation}
Now assume that
there exists a 
nonempty closed convex subset $C$
of $\HH$ such that
$T = P_{C}$.
Then,  due to \cref{eg:PC-mono}\cref{i:Pc-maxmono},
we see that $T$ is monotone.
Next, on the one hand, 
since $T = P_{C}$,
it follows from \cref{t:main} that 
$f$ is Fr\'echet differentiable on $\HH$
and $\nabla f = \Id - T =\Id - \sum_{i \in I}\alpha_{i}P_{C_{i}}$.
On the other hand, for every $i\in I$,
since $C_{i}$ is convex, closed, and nonempty,
we infer
from \cref{t:main}
(applied to $P_{C_{i}}$)
that $d_{C_{i}}^{2} =
2\q \circ \rbr{\Id - P_{C_{i}}}$ is Fr\'echet
differentiable on $\HH$ with
$\nabla d_{C_{i}}^{2} = 2\rbr{\Id - P_{C_{i}}}$.
Altogether,
since $\alpha = \sum_{i \in I }\alpha_{i}$
by definition,
it follows from \cref{eq:expand-f}
that $g$ is Fr\'echet differentiable on $\HH$
and that 
\begin{equation}
  \nabla g = \nabla f - \nabla \rbr[\Big]{
  \tfrac{1}{2}\sum_{i \in I}\alpha_{i}d_{C_{i}}^{2} - \rbr{\alpha-1}\q}
  = \rbr[\Big]{\Id - \sum_{i \in I}\alpha_{i}P_{C_{i}} } - \sum_{i\in I}
  \alpha_{i}\rbr{\Id - P_{C_{i}}} + \rbr{\alpha-1}\Id  
  =0.
\end{equation}
Consequently, there exists $\gamma \in \RR$
such that $\rbr{\forall x \in \HH}\, g\rbr{x} = \gamma$.
Conversely, assume that $T$
is monotone and that \cref{eq:cond-conical} holds.
Then, we derive from \cref{eq:expand-f} that 
\begin{equation}
  \label{eq:expand-f2}
  f = \tfrac{1}{2}\sum_{i \in I}\alpha_{i}d_{C_i}^{2} 
  - \rbr{\alpha-1}\q
+\gamma,
\end{equation}
and it  thus follows that  
$f$ is Fr\'echet differentiable on $\HH$
and, since $\alpha = \sum_{ i \in I}\alpha_{i}$,
 $\nabla f = \sum_{i \in I}\alpha_{i}\rbr{\Id - P_{C_{i}} }
- \rbr{\alpha-1}\Id = \Id - 
\sum_{i \in I}\alpha_{i}P_{C_{i}}
 =\Id -  T
 $.
 Hence, since $T$ is monotone by
 our assumption, 
 \cref{t:main} ensures the
 existence of a nonempty closed convex set
 $C$ such that $T = P_{C}$.
 Therefore,
 $f= \q \circ \rbr{ \Id - P_{C}} = \rbr{1/2}d_{C}^{2}$
 and \cref{eq:dist-general} follows from
 \cref{eq:expand-f2}.
  \end{proof}

  \begin{remark}
    \label{rm:linear-set}
    As we have seen in 
    \cref{rem:counter-sum},
    the set $C$ in \cref{t:conic-comb}
    need not be $\sum_{i \in I}\alpha_{i}C_{i}$.
  \end{remark}

We now establish
a necessary and sufficient
condition under which
a finite sum of projectors
is a projector.

\begin{theorem}[Sum of projectors]\label{t:sum-many}
  Let
	$\fa{C_{i}}{ i \in I }$
	be a finite family of nonempty 
      closed convex subsets of $\HH$, and 
  set $\alpha \coloneqq \card I$.
	Then 
	$\sum_{ i \in I} P_{C_{i}}  \in \prH$
	if and only if
	\begin{equation}
        \label{eq:cond-sum}
	\rb{\exists \gamma \in \RR}\rb{\forall x \in \HH}
	\quad
	\sum_{\substack{\rb{i,j} \in I\times I  \\ i \neq  j}}
	\scal{ P_{C_{i}}x }{ P_{C_{j}}x} = \gamma;
	\end{equation}
	in which case, $\sum_{ i \in I} C_{i}$ is 
	a closed convex set,
		\begin{equation}\label{e:sum-pr}
			\sum_{i \in I} P_{C_{i}} = P_{\sum_{i \in I} C_{i} },
		\end{equation}
	and 
	\begin{equation}\label{e:dist-sum-pr}
	d_{\sum_{ i \in I} C_{i}}^{2} = \sum_{i \in I}d_{C_{i}}^{2}
      - 2\rb{\alpha-1}\q + \gamma.
	\end{equation}
\end{theorem}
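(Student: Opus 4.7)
The plan is to derive this theorem as the specialization of \cref{t:conic-comb} to the case where every weight equals one. With $\rbr{\forall i \in I}\, \alpha_{i}=1$ one has $\alpha = \card I$, and each $P_{C_{i}}$ is monotone by \cref{eg:PC-mono}\cref{i:Pc-maxmono}, so the sum $T\coloneqq \sum_{i \in I} P_{C_{i}}$ is automatically monotone. Therefore the hypothesis ``$T$ is monotone'' in \cref{t:conic-comb} is vacuous here, and the theorem reduces to the question of whether condition \cref{eq:cond-conical} can be rewritten as \cref{eq:cond-sum}.

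The bridge is \cref{l:identity-vectors}\cref{i:iden-2}. Applying \cref{eq:iden-21} pointwise with $x_{i}\coloneqq P_{C_{i}}x$ gives
\begin{equation}
\rbr{\alpha-1}\sum_{i \in I}\q\rbr{P_{C_{i}}x}
- \tfrac{1}{2}\sum_{i \in I}\sum_{j \in I}\q\rbr{P_{C_{i}}x-P_{C_{j}}x}
= \tfrac{1}{2}\sum_{\substack{\rbr{i,j}\in I\times I\\ i\neq j}}\scal{P_{C_{i}}x}{P_{C_{j}}x},
\end{equation}
so condition \cref{eq:cond-conical} for $\fa{\alpha_{i}}{i \in I}\equiv 1$ holds with constant $\gamma_{0}$ if and only if \cref{eq:cond-sum} holds with $\gamma = 2\gamma_{0}$. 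Invoking \cref{t:conic-comb} then yields the ``if and only if'' assertion and the existence of a nonempty closed convex set $C$ with $T = P_{C}$.

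It remains to identify $C$ with $\sum_{i \in I}C_{i}$ and to derive \cref{e:dist-sum-pr}. The first is a direct application of \cref{p:sumCi}\cref{i:set-sum2} (all $\alpha_{i}=1\geq 0$), which also gives the closedness of $\sum_{i \in I}C_{i}$ for free. Thus \cref{e:sum-pr} follows. For \cref{e:dist-sum-pr}, substitute $\alpha_{i}=1$ and $2\gamma_{0}=\gamma$ into the distance identity \cref{eq:dist-general} of \cref{t:conic-comb}, yielding $d_{\sum_{i \in I}C_{i}}^{2}= \sum_{i \in I}d_{C_{i}}^{2}-2\rbr{\alpha-1}\q + \gamma$.

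I do not anticipate a genuine obstacle here: once \cref{eq:iden-21} is invoked to translate the weighted quadratic condition into the cross inner-product condition, the rest is bookkeeping (tracking the factor of two between the two $\gamma$'s and invoking \cref{p:sumCi}\cref{i:set-sum2}). The only mild subtlety is remembering that \cref{t:conic-comb} demands monotonicity of the linear combination \emph{a priori}, which is not automatic for general real weights but is automatic here because all weights are nonnegative and each projector is (maximally) monotone.
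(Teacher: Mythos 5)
Your proposal is correct and follows essentially the same route as the paper: specialize \cref{t:conic-comb} to unit weights (monotonicity of the sum being automatic), translate \cref{eq:cond-conical} into \cref{eq:cond-sum} via \cref{eq:iden-21} (with the correct factor of two between the constants), and identify the set and its closedness through \cref{p:sumCi}\cref{i:set-sum2}. The only cosmetic difference is that you obtain \cref{e:dist-sum-pr} by substituting into \cref{eq:dist-general}, whereas the paper recomputes it directly from \cref{eq:iden-22}; both yield the same identity.
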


\begin{proof}
  Since it
  is clear that $\sum_{ i \in I}P_{C_{i}}$
  is monotone, 
  we derive from \cref{t:conic-comb}
  (applied to $\fa{C_{i}}{i \in I}$,
$\fa{\alpha_{i}}{i \in I} = \fa{1}{i \in I}$,
and $\alpha = \card I = \sum_{i \in I}1$)
  and \cref{eq:iden-21} that 
\begin{subequations}
  \begin{align}
    \sum_{ i \in I}P_{C_{i}} \in \prH 
    & \iff \rbr{\exists \gamma \in \RR}\rbr{\forall x \in \HH}
    \; \rbr{\alpha-1}\sum_{i \in I}\q\rbr{P_{C_{i}}x}
    -\tfrac{1}{2}\sum_{i \in I}\sum_{j \in I}\q\rbr{P_{C_{i}}x - P_{C_{j}}x}
    =\gamma \\
    & \iff \rbr{\exists \gamma \in \RR}\rbr{\forall x \in \HH}\;
    \tfrac{1}{2}\sum_{\substack{\rbr{i,j}\in I\times I \\ i \neq j}}
    \scal{P_{C_{i}}x}{P_{C_{j}}x} =\gamma 
    \\
    & \iff \cref{eq:cond-sum},
  \end{align} 
\end{subequations}
  as desired.
  Next, suppose that 
  $\sum_{i \in I}P_{C_{i}} \in \prH$.
  Then, there exists a closed convex set $C$
  such that 
  \begin{equation}
    \label{eq:sum-in}
    \sum_{i \in I}P_{C_{i}} = P_{C};
  \end{equation}
  therefore,
  as we have shown above,
  there exists $\gamma \in \RR$
  such that 
  \begin{equation}
    \label{eq:constant}
\rbr{\forall x \in \HH}\quad  
\sum_{\substack{ \rbr{i,j} \in I\times I \\
i\neq j}}\scal{P_{C_{i}}x}{P_{C_{j}}x} = \gamma.
  \end{equation}
  According to \cref{p:sumCi}\cref{i:set-sum2}
  and \cref{eq:sum-in},
  we see that $ \sum_{i \in I}C_{i} = C$ is a
  closed convex set, from which
  and \cref{eq:sum-in} we get 
  \cref{e:sum-pr}. Furthermore,
  it follows from \cref{eq:iden-22}
  and  \cref{eq:constant} that 
  \begin{subequations}
    \begin{align}
    \rbr{\forall x \in \HH}\quad 
    d_{C}^{2}\rbr{x} = \norm{x - P_{C}x}^{2}
    & = \norm[\Big]{x- \sum_{i \in I}P_{C_{i}}x}^{2}
    \\
    & = \rbr{1-\alpha}\norm{x}^{2} + \sum_{i \in I}\norm{x-P_{C_{i}}x}^{2}
    + \sum_{\substack{\rbr{i,j}\in I\times I \\ i \neq j}}
    \scal{P_{C_{i}}x}{P_{C_{j}}x}
    \\
    & = \sum_{i \in I}d_{C_{i}}^{2}\rbr{x}
    - 2\rbr{\alpha-1}\q\rbr{x} +\gamma,
  \end{align}
  \end{subequations}
  and \cref{e:dist-sum-pr} follows.
\end{proof}

\begin{corollary}\label{t:sum-pr}
	Let $C$ and $D$ 
	be nonempty closed convex subsets 
	of $\HH$.
	Then the following are equivalent:
	\begin{enumerate}
		\item\label{i:sum1}
		$P_{C} + P_{D} \in \prH$.
		\item\label{i:sum2}
              $\rb{\exists \gamma \in \RR}\rb{\forall x \in \HH}\,
		\scal{P_{C}x }{ P_{D}x } = \gamma$.
	\end{enumerate}
	If \cref{i:sum1} or \cref{i:sum2} holds, 
	then $C+D$ is a
	closed convex set,  
	\begin{equation}
	P_{C} + P_{D} = P_{C+D}, 
	\end{equation}
	and 
	\begin{equation}\label{e:dist-sum}
	d_{C+D}^{2} = d_{C}^{2} + d_{D}^{2} - 2\q + 2\gamma.
	\end{equation}
\end{corollary}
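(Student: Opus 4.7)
The plan is to deduce this corollary as the two-set specialization of \cref{t:sum-many}. Apply \cref{t:sum-many} with $I \coloneqq \{1,2\}$, $C_{1}\coloneqq C$, and $C_{2}\coloneqq D$; then $\alpha = \card I = 2$ and $\sum_{i \in I} P_{C_{i}} = P_{C} + P_{D}$.

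Next I would translate the characterizing condition \cref{eq:cond-sum}. The sum
\begin{equation}
\sum_{\substack{\rbr{i,j} \in I \times I \\ i \neq j}} \scal{P_{C_{i}}x}{P_{C_{j}}x} = \scal{P_{C}x}{P_{D}x} + \scal{P_{D}x}{P_{C}x} = 2\scal{P_{C}x}{P_{D}x}
\end{equation}
is constant in $x$ if and only if $\scal{P_{C}x}{P_{D}x}$ is constant in $x$; writing that constant as $\gamma$ and the constant in \cref{eq:cond-sum} as $2\gamma$, the equivalence \cref{i:sum1}$\iff$\cref{i:sum2} is immediate from \cref{t:sum-many}.

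Under either condition, \cref{t:sum-many} delivers that $C+D = \sum_{i \in I}C_{i}$ is closed and convex, that $P_{C} + P_{D} = P_{C+D}$, and finally the distance identity \cref{e:dist-sum-pr} specializes (with $\alpha-1 = 1$ and the constant equal to $2\gamma$) to
\begin{equation}
d_{C+D}^{2} = d_{C}^{2} + d_{D}^{2} - 2\q + 2\gamma,
\end{equation}
which is \cref{e:dist-sum}. There is no genuine obstacle here; the entire argument is a direct substitution into \cref{t:sum-many}, so the proof should be completely routine.
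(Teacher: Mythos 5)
Your proposal is correct and is exactly the route the paper intends: the corollary is stated without proof precisely because it is the two-set specialization of \cref{t:sum-many}, and your bookkeeping (the constant in \cref{eq:cond-sum} equals $2\gamma$ since the off-diagonal sum is $2\scal{P_{C}x}{P_{D}x}$, and $\alpha-1=1$ in \cref{e:dist-sum-pr}) is accurate, yielding \cref{e:dist-sum} as stated.
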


  \begin{remark}
    Consider the setting of \cref{t:sum-pr}.
    In view of \cite[Example~12.3]{bauschke2017convex},
    we see that \cref{e:dist-sum} is equivalent
    to 
    $\rbr{\iota_{C}\infconv \iota_{D}}\infconv \q 
    = \iota_{C}\infconv \q + \iota_{D}\infconv \q -\q +\gamma$.
    Hence, using \cite[Example~13.3(i) and 
    Proposition~13.24(i)]{bauschke2017convex}
    and Moreau's decomposition \cite{Moreau-prox-dual-1965},
    we infer that 
    \begin{subequations}
      \begin{align}
        \cref{e:dist-sum} 
        & \iff \q - \rbr{\iota_{C}\infconv \iota_{D}}^{\ast}\infconv\q
        = -  \rbr{\iota_{C}^{\ast}\infconv \q } 
        - \rbr{\iota_{D}^{\ast}\infconv \q} + \q 
        +\gamma
        \\
        & \iff 
        \q - \rbr{\iota_{C}^{\ast} + \iota_{D}^{\ast}}\infconv \q
        = 
         -  \rbr{\iota_{C}^{\ast}\infconv \q } 
        - \rbr{\iota_{D}^{\ast}\infconv \q}
        +\q 
        +\gamma
        \\
        & \iff 
\rbr{\iota_{C}^{\ast} + \iota_{D}^{\ast}}\infconv \q
= \iota_{C}^{\ast}\infconv\q + \iota_{D}^{\ast}\infconv\q - \gamma. 
\label{eq:prox-equi}
      \end{align} 
    \end{subequations}
This type of relationship
is used in \cite[Proposition~3.16]{Combettes-2018-mono} 
to establish a condition
for the sum of two 
proximity operators 
to be a proximity operator.
  \end{remark}

The following 
simple example shows 
that the constant $\gamma$
in \cref{t:sum-pr} can take 
on any value.

\begin{example}
	Let $u$ and $v$ be in $\HH$,
	set $C \coloneqq \{ u \}$,
	and set $D \coloneqq \{v\}$.
	Then clearly $P_{C} + P_{D} = P_{\{u+v\}} = P_{C+D}$
      and $\rb{\forall x \in \HH}\, 
	\scal{P_{C}x}{P_{D}x} = \scal{u}{v}$. 
\end{example}

As a consequence of \cref{t:sum-pr},
a sum of projectors
onto orthogonal sets
is a projector; 	
see 
\cite[Proposition~2.6]{bauschke2006strongly}
for a difference
derivation.

\begin{corollary}\label{c:CperpD}
	Let $C$
	and $D$ be nonempty closed
	convex subsets of $\HH$
	such that $C \perp D$.
	Then the following hold: 
	\begin{enumerate}
		\item $C+D$ is a nonempty closed
		convex set.
		\item $P_{C} + P_{D} = P_{C+D}$.
		\item $d_{C+D}^{2} = d_{C}^{2} + d_{D}^{2} - 2\q$.
	\end{enumerate}
\end{corollary}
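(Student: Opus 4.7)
The plan is to reduce the claim to a direct application of \cref{t:sum-pr}. The hypothesis $C\perp D$ means that $\scal{c}{d}=0$ for every $(c,d)\in C\times D$. Since, for every $x\in\HH$, we have $P_{C}x\in C$ and $P_{D}x\in D$, this immediately yields
\begin{equation}
\rbr{\forall x\in\HH}\quad \scal{P_{C}x}{P_{D}x}=0.
\end{equation}
Hence condition \cref{i:sum2} of \cref{t:sum-pr} holds with the constant $\gamma=0$.

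Consequently, \cref{t:sum-pr} applies: the set $C+D$ is closed and convex (and obviously nonempty, being the Minkowski sum of two nonempty sets), the identity $P_{C}+P_{D}=P_{C+D}$ holds, and the distance identity $d_{C+D}^{2}=d_{C}^{2}+d_{D}^{2}-2\q+2\gamma$ specializes, with $\gamma=0$, to $d_{C+D}^{2}=d_{C}^{2}+d_{D}^{2}-2\q$. All three assertions then follow at once.

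There is essentially no obstacle here beyond observing that the set-theoretic orthogonality $C\perp D$ transfers to the projector-level orthogonality condition required by \cref{t:sum-pr}; the rest is a one-line substitution $\gamma=0$ into the conclusions of that corollary.
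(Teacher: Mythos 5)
Your proof is correct and follows exactly the route the paper takes: observe that $C\perp D$ gives $\scal{P_{C}x}{P_{D}x}=0$ for all $x\in\HH$, so \cref{t:sum-pr} applies with $\gamma=0$ and yields all three conclusions. Nothing further is needed.
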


\begin{proof}
  Since $\rb{\forall x \in \HH}\, \scal{P_{C}x }{ P_{D}x  } = 0$, the 
	conclusions
	readily 
	follow  from \cref{t:sum-pr}.
\end{proof}

We now provide an 
instance  
where item 
\cref{i:sum2} of \cref{t:sum-pr}
holds, $C \nsubseteq D^{\perp}$
in general,
and neither $C$ nor $D$
is a cone.

\begin{example}\label{eg:figgy}
	Let $K$ be a 
	nonempty closed
	convex cone 
	in $\HH$,
	let $\rho_{1}$
	and $\rho_{2}$
	be in $\RPP$,
	set $C \coloneqq K \cap \ball{0}{\rho_{1}}$,
	and set $D \coloneqq \pc{K} \cap 
	\ball{0}{\rho_{2}}$. 
	It then immediately follows
	from 
	\cite[Theorem~7.1]{bauschke2017projecting}
	and \cite[Theorem~6.30(ii)]{bauschke2017convex}
	that 
		\begin{equation}
			\rb{\forall x \in \HH}
			\quad 
			\scal{P_{C}x }{ P_{D}x } 
			= \scal*{ \frac{\rho_{1}}{ \max\left\{ \norm{P_{K}x},\rho_{1} \right\} } P_{K}x }{
			\frac{\rho_{2}}{ \max\left\{ \norm{P_{ \pc{K} }x},\rho_{2} \right\} } P_{ \pc{K}}x }
			= 0 .
		\end{equation}
\end{example}

\begin{figure}[H]
	\centering 
	\includegraphics[scale= 0.45]{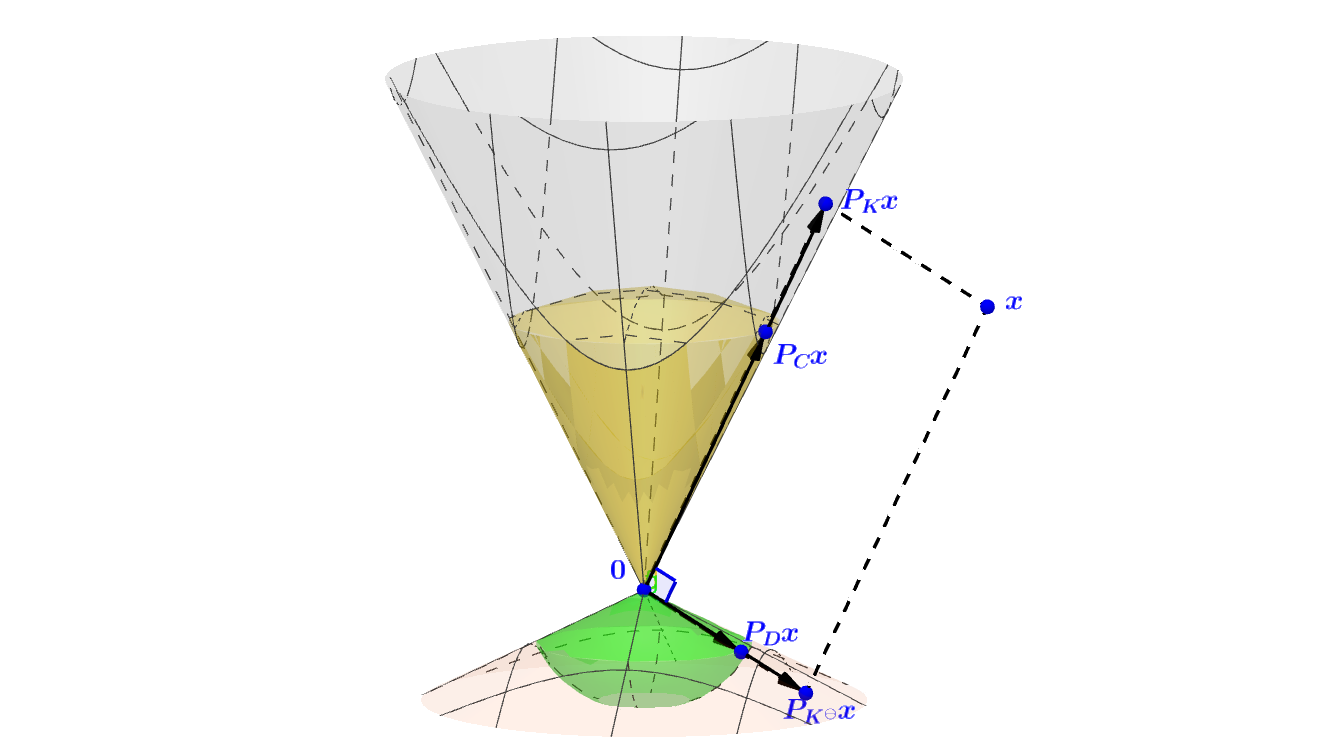}
	\caption{
	A \texttt{GeoGebra} 
	\cite{geogebra}
	snapshot
	illustrating
	the sets $C$ (yellow)
	and $D$ (green) in the setting
	of \cref{eg:figgy}.}
\end{figure}

We next establish a necessary and sufficient condition
for  $u+P_{C}$ to be a projector.

\begin{example}\label{eg:uPC}
	Let $C$ be 
	a nonempty closed
	convex subset of $\HH$,
	and let $u \in \HH.$
      Then, since $\rb{\forall x \in \HH}\,u = P_{ \left\{u\right\} }x$, 
	we deduce from \cref{t:sum-pr} that 
	\begin{subequations} 
		\begin{align}
		u + P_{C} = P_{\left\{u\right\}} + P_{C} \in \prH  
		& \iff \rb{\exists \gamma \in \RR}\rb{\forall x \in \HH}
            \, \scal{u}{P_{C}x} = \gamma\\
            & \iff \rb{\exists \gamma \in \RR}\rb{\forall x \in C}\,
		\scal{u}{x} = \gamma \\
		& \iff \rb{\forall x \in C}\rb{\forall y \in C}
            \, \scal{u}{x} = \scal{u}{y} \\
		&  \iff \rb{\forall x \in C}\rb{\forall y \in C}
            \, \scal{u}{x-y} = 0 \\
		& \iff u \in \rb{C-C}^{\perp};
		\end{align}
	\end{subequations}
in which case, $u + P_{C} = P_{u+ C}$
due to \cref{t:sum-pr}.
\end{example}

\begin{remark}
	Consider the setting of \cref{eg:uPC}.
	Since $u + P_{C}$
	is monotone,  nonexpansive,
	and a sum of proximity operators, 
	\cite[Corollary~2.5]{bartz2017resolvent}
	guarantees that $u + P_{C}$
	is 
	a proximity operator.
	However, 
	by \cref{eg:uPC}, 
	it is not a projector
	unless $u \in \rb{C-C}^{\perp}.$ 
\end{remark}

Here is
a sufficient,
but not necessary, 
condition
for a sum of projectors to be
a projector.

\begin{corollary}\label{c:induct-many}
	Let $m \geq 2$
	be an integer,
	set $I \coloneqq \{1,\ldots,m\}$,
	let 
	$\fa{C_{i}}{ i \in I }$
	be a family of nonempty 
	closed convex subsets of $\HH$,
	and set $C \coloneqq \sum_{ i \in I} C_{i}.$
	Suppose that, 
	for every $\rb{i,j} \in I \times I$
	with $i<j$,
	there exists $\gamma_{i,j} \in \RR$
	such that 
      $\rb{\forall x \in \HH}\,\scal{P_{C_{i}}x }{ P_{C_{j}}x }= \gamma_{i,j}$.
	Then $C$ is a closed convex set and 
	\begin{math}
	\sum_{i \in I} P_{C_{i}} = P_{C}. 
	\end{math}
\end{corollary}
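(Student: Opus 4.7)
The plan is to derive this directly from \cref{t:sum-many}. The hypothesis provides pairwise constants, and \cref{t:sum-many} asks for a single constant value for the double sum $\sum_{i\neq j}\scal{P_{C_i}x}{P_{C_j}x}$, so I only need to show that the pairwise constants combine into a single constant.

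More precisely, I would set $\gamma \coloneqq 2 \sum_{\substack{(i,j) \in I \times I \\ i < j}} \gamma_{i,j} \in \RR$, and then, exploiting the symmetry $\scal{P_{C_{i}}x}{P_{C_{j}}x} = \scal{P_{C_{j}}x}{P_{C_{i}}x}$ together with the assumption that $\scal{P_{C_{i}}x}{P_{C_{j}}x} = \gamma_{i,j}$ whenever $i<j$, compute
\begin{equation}
\rbr{\forall x \in \HH}\quad
\sum_{\substack{(i,j) \in I \times I \\ i \neq j}} \scal{P_{C_{i}}x}{P_{C_{j}}x}
= 2\sum_{\substack{(i,j) \in I \times I \\ i<j}} \scal{P_{C_{i}}x}{P_{C_{j}}x}
= 2\sum_{\substack{(i,j) \in I \times I \\ i<j}} \gamma_{i,j}
= \gamma.
\end{equation}
This is precisely condition \cref{eq:cond-sum} of \cref{t:sum-many}.

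Invoking \cref{t:sum-many} then yields that $\sum_{i \in I} P_{C_{i}} \in \prH$, that $C = \sum_{i \in I}C_{i}$ is a closed convex set, and that $\sum_{i \in I}P_{C_{i}} = P_{C}$, which is the desired conclusion. There is no real obstacle here; the entire content of the corollary is the observation that constancy of each pairwise inner product is a (strictly stronger) sufficient condition for constancy of their sum.
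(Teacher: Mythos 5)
Your proof is correct, but it follows a different route from the paper's. You derive the corollary in one stroke from \cref{t:sum-many}: symmetry of the inner product turns the pairwise constants $\gamma_{i,j}$ into the single constant $\gamma = 2\sum_{i<j}\gamma_{i,j}$ for the double sum, which is exactly condition \cref{eq:cond-sum}, and the conclusion (closedness of $\sum_{i\in I}C_i$ and $\sum_{i\in I}P_{C_i}=P_{\sum_{i\in I}C_i}$) is then read off from the ``in which case'' part of that theorem; since \cref{t:sum-many} is established before this corollary, there is no circularity. The paper instead argues by induction on the partial sums $D_k \coloneqq \sum_{i=1}^{k}C_i$, repeatedly applying the two-set case (\cref{t:sum-pr}) together with the observation that $\scal{P_{D_k}x}{P_{C_{k+1}}x} = \sum_{i=1}^{k}\gamma_{i,k+1}$ is constant. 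Your argument is shorter and makes transparent that pairwise constancy is simply a stronger hypothesis than constancy of the aggregated sum; the paper's induction has the side benefit of producing, along the way, that every partial sum $D_k$ is closed and that $\sum_{i=1}^{k}P_{C_i}=P_{D_k}$ for each $k$, which resonates with the partial sum property studied later in the paper.
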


	\begin{proof}
        Set $\rb{\forall k \in I}\, D_{k} \coloneqq \sum_{i=1}^{k} C_{i}$,
		and let us establish that 
		\begin{equation}\label{e:inductive-many}
		\rb{\forall k \in I\smallsetminus \{1\}} \quad 
		\quad  
		\text{$D_{k}$ is a closed convex set and}
		\sum_{i=1}^{k} P_{C_{i}} = P_{D_{k}}.
		\end{equation}
		Due to \cref{t:sum-pr}, the claim holds if $k=2$,
		and we therefore assume that, for some $k \in \{2,\ldots,m-1\}$,
		$D_{k}$ is a closed convex set and that 
		$\sum_{i=1}^{k} P_{C_{i}} = P_{D_{k}}$. 
		Then, by our assumption, 
            $\rb{\forall x \in \HH}\,\scal{ P_{D_{k}}x }{ P_{C_{k+1}}x } = \sum_{i=1}^{k}\scal{ P_{C_{i}}x }{ P_{C_{k+1}}x  } = \sum_{i=1}^{k}\gamma_{i,k+1}$,
		from which and \cref{t:sum-pr} (applied to $D_{k}$ and $C_{k+1}$)
		we infer that $D_{k+1} = D_{k} + C_{k+1}$ is a closed
		convex set and, due to
		the induction hypothesis, 
		$\sum_{i=1}^{k+1}P_{C_{i}} 
		= \sum_{i=1}^{k} P_{C_{i}} + P_{C_{k+1}} 
		= P_{D_{k}} + P_{C_{k+1}}
		= P_{D_{k} + C_{k+1} } 
		= P_{D_{k+1}} $.
		Hence, 
		 letting $k=m$ in \cref{e:inductive-many} 
		yields the conclusion.
	\end{proof}

We now illustrate that 
the assumption of \cref{c:induct-many}
need not hold 
when merely $\sum_{ i \in I} P_{C_{i}} = P_{C}$.

\begin{example}
	Let $C$ be a nonempty closed convex
	subset of $\HH$ 
	such that $\HH \smallsetminus \rb{C - C}^{\perp }
	\neq \varnothing$, 
	and suppose that 
	$u \in \HH \smallsetminus \rb{C - C}^{\perp }
	.$
	Then $P_{ \left\{u\right\}} + P_{\left\{-u\right\}} + P_{C} 
	= P_{C}$
	is a projector.
	However, 
	if 
	$x\mapsto \scal{P_{ \{u\} }x}{ P_{C}x} = \scal{u}{P_{C}x}$
	were a constant,
	then it would follow 
	from \cref{t:sum-pr}
	that $u + P_{C} = P_{\left\{u\right\}} + P_{C}$
	is a projector, 
	which 
	violates
	\cref{eg:uPC}
	and the assumption 
	that $u \notin \rb{C-C}^{\perp}.$
\end{example}

We conclude this section
with a result concerning
the difference of two projectors.

\begin{proposition}
      \label{p:difference-prs}
      Let $C$
      and $D$ be nonempty closed convex subsets
      of $\HH$.
      Then $P_{D} - P_{C} \in \prH$
      if and only if 
      $P_{D} - P_{C}$ is monotone
      and there exists
      $\gamma \in \RR$ such that 
      $\rbr{
      \forall x \in \HH} \, \scal{P_{C}x}{P_{D}x - P_{C}x} = \gamma$.
\end{proposition}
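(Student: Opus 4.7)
The plan is to deduce the claim by specializing the general linear-combination criterion \cref{t:conic-comb} to the two-term family with signed coefficients. Set $I \coloneqq \{1,2\}$, $C_{1}\coloneqq C$, $C_{2}\coloneqq D$, $\alpha_{1}\coloneqq -1$, and $\alpha_{2}\coloneqq 1$, so that $\sum_{i\in I}\alpha_{i}P_{C_{i}} = P_{D}-P_{C}$ and $\alpha = \sum_{i\in I}\alpha_{i}=0$. Then by \cref{t:conic-comb}, $P_{D}-P_{C}\in\prH$ if and only if $P_{D}-P_{C}$ is monotone and there exists $\gamma\in\RR$ such that, for every $x\in\HH$,
\begin{equation}
(\alpha-1)\sum_{i\in I}\alpha_{i}\q\rbr{P_{C_{i}}x} - \tfrac{1}{2}\sum_{i\in I}\sum_{j\in I}\alpha_{i}\alpha_{j}\q\rbr{P_{C_{i}}x-P_{C_{j}}x} = \gamma.
\end{equation}

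The next step is to simplify this expression for the specific coefficients. The first sum equals $(-1)\bigl[-\q(P_{C}x)+\q(P_{D}x)\bigr] = \q(P_{C}x)-\q(P_{D}x)$. In the double sum, the diagonal $i=j$ terms vanish since $\q(0)=0$, while each of the two off-diagonal terms contributes $\alpha_{1}\alpha_{2}\q(P_{C}x-P_{D}x) = -\q(P_{C}x-P_{D}x)$; thus the double sum equals $-2\q(P_{C}x-P_{D}x)$, and multiplying by $-\tfrac{1}{2}$ yields $\q(P_{C}x-P_{D}x)$. So the displayed condition becomes
\begin{equation}
\q\rbr{P_{C}x} - \q\rbr{P_{D}x} + \q\rbr{P_{C}x - P_{D}x} = \gamma.
\end{equation}

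A direct expansion of the left-hand side gives $\tfrac{1}{2}\norm{P_{C}x}^{2}-\tfrac{1}{2}\norm{P_{D}x}^{2}+\tfrac{1}{2}\norm{P_{C}x}^{2}-\scal{P_{C}x}{P_{D}x}+\tfrac{1}{2}\norm{P_{D}x}^{2} = \norm{P_{C}x}^{2}-\scal{P_{C}x}{P_{D}x} = -\scal{P_{C}x}{P_{D}x-P_{C}x}$. Hence the condition is equivalent to the existence of some $\gamma'\in\RR$ (namely $\gamma'=-\gamma$) with $\scal{P_{C}x}{P_{D}x-P_{C}x}=\gamma'$ for every $x\in\HH$. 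Combined with the unchanged monotonicity hypothesis on $P_{D}-P_{C}$, this delivers precisely the stated equivalence.

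There is no real obstacle here: the argument is a mechanical specialization of \cref{t:conic-comb}, and the only care required is in the algebraic simplification of the quadratic expression, which collapses cleanly because the two off-diagonal terms of the double sum coincide.
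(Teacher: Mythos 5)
Your proposal is correct and matches the paper's own argument: the paper likewise proves this by specializing \cref{t:conic-comb} to the two sets with coefficients $1$ and $-1$ (just listing them as $\rbr{C_1,C_2}=\rbr{D,C}$ rather than your $\rbr{C,D}$, which is immaterial) and performing the same algebraic simplification to $\scal{P_{C}x}{P_{C}x-P_{D}x}$ being constant. No gaps; the sign difference in $\gamma$ is harmless since $\gamma$ ranges over all of $\RR$.
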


\begin{proof}
      Using \cref{t:conic-comb}
      with $I=\left\{ 1,2 \right\}$,
      $\rbr{C_{1},C_{2}} = \rbr{D,C}$,
      and $\rbr{\alpha_{1},\alpha_{2}} = \rbr{1,-1}$,
      we infer that 
      $P_{D} - P_{C} \in \prH$
      if and only if $P_{D} - P_{C}$
      is monotone and there exists $\gamma \in \RR$
      such that 
      $\rbr{\forall x \in \HH} \, 
      {-}\gamma =
      -\rbr{\q\rbr{ P_{D}x} - \q\rbr{P_{C}x}}
      + \q\rbr{P_{C}x - P_{D}x}
      = \scal{P_{C}x}{P_{C}x - P_{D}x}$,
      which is the desired conclusion.
\end{proof}

\section{Convex combination of projectors}
\label{sect:convex-comb}

The analysis of this section
requires the following
results.

\begin{fact}[Zarantonello]
  \label{f:Zaran-convex-comb}
  Let $\fa{T_{i}}{i \in I} $ be a finite
  family of firmly nonexpansive operators
  from $\HH$ to $\HH$,
  let $\fa{\alpha_{i}}{i \in I} $
  be real numbers in $\left]0,1\right]$
  such that $\sum_{i \in I}\alpha_{i} =1$,
  and let $C$
  be a nonempty closed convex subset 
  of $\HH$.
  Then $\sum_{i \in I}\alpha_{i}T_{i} = P_{C}$ 
  if and only if
  there exist vectors $\fa{u_{i}}{i \in I} $
  in $\HH$ such that 
  $\rbr{\forall i \in I} \, T_{i} = P_{C} + 
  u_{i}$
  and  $\sum_{i \in I}\alpha_{i}u_{i} = 0$.
\end{fact}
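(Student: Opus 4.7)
The backward direction is routine: if $T_{i} = P_{C} + u_{i}$ with $\sum_{i \in I}\alpha_{i}u_{i} = 0$, then $\sum_{i \in I}\alpha_{i}T_{i} = \rbr{\sum_{i \in I}\alpha_{i}}P_{C} + \sum_{i \in I}\alpha_{i}u_{i} = P_{C}$, and firm nonexpansiveness of each $T_{i}$ is consistent with the constant shift.

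For the forward direction, the plan is to extract equality from two different Jensen-type estimates using the strict convexity of $\norm{\cdot}^{2}$ together with the firm nonexpansiveness of each $T_{i}$. \emph{Step~1:} for any $x \in \HH$, set $y \coloneqq P_{C}x$, so $y \in C$ and hence $P_{C}y = y$. Applying the hypothesis $\sum_{i \in I}\alpha_{i}T_{i} = P_{C}$ at $x$ and at $y$ gives $\sum_{i \in I}\alpha_{i}T_{i}y = y = P_{C}x = \sum_{i \in I}\alpha_{i}T_{i}x$, whence $\sum_{i \in I}\alpha_{i}\rbr{T_{i}x - T_{i}y} = 0$. Multiplying the firm-nonexpansiveness bound $\norm{T_{i}x - T_{i}y}^{2} \leq \scal{T_{i}x - T_{i}y}{x-y}$ by $\alpha_{i} > 0$ and summing then yields $\sum_{i \in I}\alpha_{i}\norm{T_{i}x - T_{i}y}^{2} \leq \scal{0}{x-y} = 0$, so $T_{i}x = T_{i}\rbr{P_{C}x}$ for every $i \in I$.

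\emph{Step~2:} for $c, c' \in C$, the hypothesis at $c$ and at $c'$ gives $\sum_{i \in I}\alpha_{i}\rbr{T_{i}c - T_{i}c'} = P_{C}c - P_{C}c' = c - c'$. Chaining Jensen's inequality with firm nonexpansiveness,
\begin{equation*}
\norm{c-c'}^{2}
\leq \sum_{i \in I}\alpha_{i}\norm{T_{i}c - T_{i}c'}^{2}
\leq \sum_{i \in I}\alpha_{i}\scal{T_{i}c - T_{i}c'}{c-c'}
= \norm{c-c'}^{2},
\end{equation*}
forces equality throughout; the equality case in Jensen (with $\alpha_{i} > 0$ and strict convexity of $\norm{\cdot}^{2}$) yields $T_{i}c - T_{i}c' = c - c'$ for each $i \in I$, so $T_{i} - \Id$ is constant on $C$. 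To finish, fix $c_{0} \in C$ and set $u_{i} \coloneqq T_{i}c_{0} - c_{0}$; Step~2 gives $T_{i}\rbr{P_{C}x} - P_{C}x = u_{i}$ for every $x \in \HH$, while Step~1 yields $T_{i}x = T_{i}\rbr{P_{C}x}$, so $T_{i} = P_{C} + u_{i}$. Finally, $\sum_{i \in I}\alpha_{i}u_{i} = \sum_{i \in I}\alpha_{i}T_{i}c_{0} - c_{0} = P_{C}c_{0} - c_{0} = 0$.

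The main obstacle is identifying the two choices that turn the generic Jensen/firm-nonexpansiveness chain into an equality: the first (restricting to $y = P_{C}x$) kills the right-hand side of the firm-nonexpansiveness sum; the second (restricting to $c, c' \in C$ where $P_{C}$ acts as the identity) makes both ends of the chain equal to $\norm{c-c'}^{2}$. Once these are found, strict convexity of $\norm{\cdot}^{2}$ does the rest and the constants $u_{i}$ and their weighted sum fall out by direct computation.
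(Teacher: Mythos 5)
Your argument is correct. Note that the paper does not actually prove this Fact: its ``proof'' is just a citation of Zarantonello's Theorem~1.3, so your write-up supplies a genuinely self-contained argument where the paper defers to the literature. Both halves of your forward direction check out: in Step~1, evaluating the hypothesis at $x$ and at $y = P_{C}x$ indeed gives $\sum_{i\in I}\alpha_{i}\rbr{T_{i}x - T_{i}y}=0$, and summing the firm-nonexpansiveness inequalities against the weights $\alpha_{i}>0$ forces each $\norm{T_{i}x - T_{i}y}^{2}=0$; in Step~2, the chain $\norm{c-c'}^{2}\leq\sum_{i}\alpha_{i}\norm{T_{i}c-T_{i}c'}^{2}\leq\scal{\sum_{i}\alpha_{i}\rbr{T_{i}c-T_{i}c'}}{c-c'}=\norm{c-c'}^{2}$ is valid, and equality in the Jensen step (equivalently, the variance identity $\sum_{i}\alpha_{i}\norm{v_{i}}^{2}-\norm{\sum_{i}\alpha_{i}v_{i}}^{2}=\sum_{i}\alpha_{i}\norm{v_{i}-\sum_{j}\alpha_{j}v_{j}}^{2}$, in the spirit of \cref{l:identity-vectors}) does force $T_{i}c-T_{i}c'=c-c'$ for every $i$, since the weighted mean of the $v_{i}$ is $c-c'$. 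The assembly $T_{i}=P_{C}+u_{i}$ with $u_{i}:=T_{i}c_{0}-c_{0}$ and $\sum_{i}\alpha_{i}u_{i}=P_{C}c_{0}-c_{0}=0$ is then immediate, and the backward direction is the one-line computation you give (translates of firmly nonexpansive maps remain firmly nonexpansive, so there is no tension with the standing hypothesis). What your route buys is an elementary, fully convex-analytic proof using only the definition of firm nonexpansiveness and the equality case of convexity of $\norm{}^{2}$, independent of Zarantonello's original development.
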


\begin{proof}
  See \cite[Theorem~1.3]{zarontello1971projections-partI}.
\end{proof}

\begin{lemma}
  \label{f:BCL-2004}
  Let $C$ and $D$ be nonempty 
  closed convex subsets of $\HH$, and
  set $v \coloneqq P_{\closu{D-C}}0$.
  Then the following hold:
  \begin{enumerate}
    \item\label{i:BCL-2004}  
   Let $\fa{c_{n}}{n \in \NN}$
  and $\fa{d_{n}}{n \in \NN}$ be
  sequences in $C$ and $D$, 
  respectively,
  and
  suppose that $d_{n}-c_{n}\to v$.
  Then 
  $d_{n} - P_{C}d_{n} \to v$.
\item\label{i:2sets-gap} Suppose that 
  there exists $u \in \HH$
  such that $P_{D} = P_{C} + u$.
  Then $u=v \in \rbr{C-C}^{\perp}$ and $D = C + v$.
  \item\label{i:2sets-distant}
        Suppose that 
        there exists $\gamma \in \RR$
        such that 
        $\rbr{\forall x \in \HH} \, 
        \norm{P_{D}x - P_{C}x} = \gamma$.
        Then $v \in \rbr{C - C}^{\perp}$
        and $D = C + v$.
  \end{enumerate}
\end{lemma}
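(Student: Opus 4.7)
For \cref{i:BCL-2004}, the plan is to exploit the fact that $v$ is the unique element of minimum norm in the nonempty closed convex set $\closu{D-C}$ (note that $D-C$ is convex as the Minkowski sum of two convex sets). On the one hand, the projection inequality and our hypothesis yield
\begin{equation}
\norm{d_{n} - P_{C}d_{n}} \leq \norm{d_{n} - c_{n}} \to \norm{v}.
\end{equation}
On the other hand, $w_{n} \coloneqq d_{n} - P_{C}d_{n}$ lies in $D-C \subset \closu{D-C}$, so $\norm{w_{n}} \geq \norm{v}$. Hence $\norm{w_{n}} \to \norm{v}$; that is, $\fa{w_{n}}{n \in \NN}$ is a norm-minimizing sequence in $\closu{D-C}$. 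Applying the parallelogram identity to $w_{n}$ and $w_{m}$ and using the convexity bound $\norm{\rbr{w_{n}+w_{m}}/2} \geq \norm{v}$ shows that $\fa{w_{n}}{n \in \NN}$ is Cauchy; its limit belongs to $\closu{D-C}$ and has norm $\norm{v}$, so by uniqueness of $P_{\closu{D-C}}0$, the limit must be $v$.

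For \cref{i:2sets-gap}, the plan is first to identify $D$ as a translate of $C$. Evaluating $P_{D} = P_{C}+u$ at $c \in C$ yields $P_{D}c = c+u \in D$, so $C+u \subset D$; evaluating at $d \in D$ yields $d-u = P_{C}d \in C$, so $D \subset C+u$. Thus $D = C+u$. Next, for every $c_{0} \in C$, the point $c_{0}+u$ lies in $D$, whence $c_{0}+u = P_{D}\rbr{c_{0}+u} = P_{C}\rbr{c_{0}+u} + u$, i.e., $P_{C}\rbr{c_{0}+u} = c_{0}$. The variational characterization of $P_{C}$ then gives $\scal{u}{c-c_{0}} \leq 0$ for all $c,c_{0} \in C$, and swapping the roles of $c$ and $c_{0}$ yields $u \in \rbr{C-C}^{\perp}$. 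Finally, the identity $D-C = u+\rbr{C-C}$, combined with this orthogonality, delivers
\begin{equation}
\rbr{\forall c_{1},c_{2} \in C}\quad \norm{u+c_{1}-c_{2}}^{2} = \norm{u}^{2} + \norm{c_{1}-c_{2}}^{2} \geq \norm{u}^{2},
\end{equation}
so $u$ is the minimum-norm element of $D-C$ (and of $\closu{D-C}$), hence $u = v$.

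For \cref{i:2sets-distant}, the plan is to reduce to an argument analogous to \cref{i:2sets-gap} using \cref{i:BCL-2004}. Since $v \in \closu{D-C}$, pick sequences $c_{n} \in C$ and $d_{n} \in D$ with $d_{n}-c_{n} \to v$. By \cref{i:BCL-2004}, $d_{n}-P_{C}d_{n} \to v$; because $P_{D}d_{n} = d_{n}$, the hypothesis $\norm{P_{D}x - P_{C}x} = \gamma$ yields $\gamma = \norm{v}$ in the limit. Now for arbitrary $c \in C$, the vector $P_{D}c - c = P_{D}c - P_{C}c$ lies in $D-C$ and has norm $\gamma = \norm{v}$; by uniqueness of the minimum-norm element of $\closu{D-C}$, it equals $v$, so $c+v = P_{D}c \in D$, i.e., $C+v \subset D$. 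The symmetric argument applied to $P_{C}d - d$ for $d \in D$ gives $D \subset C+v$; thus $D = C+v$. Finally, $P_{C}\rbr{c+v} = c$ for every $c \in C$, so the variational characterization of $P_{C}$ (as in \cref{i:2sets-gap}) yields $v \in \rbr{C-C}^{\perp}$.

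\textbf{Main obstacle.} The crux is \cref{i:BCL-2004}: once the classical parallelogram-identity argument establishes that a norm-minimizing sequence in a nonempty closed convex subset of $\HH$ converges to the unique minimizer, the remaining two statements follow by straightforward applications of the variational characterization of projectors together with the uniqueness of the minimum-norm element of $\closu{D-C}$.
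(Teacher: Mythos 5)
Your proposal is correct, and it reaches the same conclusions by a more self-contained route than the paper. For \cref{i:BCL-2004} the paper simply cites \cite[Proposition~2.5(i)]{bauschke2004finding}, whereas you reprove it via the standard argument: squeeze $\norm{d_{n}-P_{C}d_{n}}$ between $\norm{v}$ and $\norm{d_{n}-c_{n}}$, then use the parallelogram law to show the norm-minimizing sequence in $\closu{D-C}$ is Cauchy and converges to the unique minimizer $v$ — this is exactly the content of the cited result. For \cref{i:2sets-gap} the paper invokes its earlier \cref{eg:uPC} to get $u\in\rbr{C-C}^{\perp}$ and $D=C+u$, and then uses \cref{i:BCL-2004} along a sequence in $D$ to conclude $u=v$; you instead derive $D=C+u$ and the orthogonality directly from the variational characterization of $P_{C}$, and identify $u=v$ by the Pythagorean computation on $D-C=u+\rbr{C-C}$, which avoids both \cref{eg:uPC} and part \cref{i:BCL-2004}. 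For \cref{i:2sets-distant} both proofs start identically (use \cref{i:BCL-2004} to get $\gamma=\norm{v}$), but the paper then cites \cite[Lemma~2.4]{bauschke-moursi-2016} to conclude $P_{D}x-P_{C}x=v$ for all $x$ and reduces to \cref{i:2sets-gap}, while you finish by hand, repeatedly exploiting uniqueness of the minimum-norm element of $\closu{D-C}$. The only cosmetic gap is your unexplained assertion $P_{C}\rbr{c+v}=c$ at the end of \cref{i:2sets-distant}; it follows in one line from the same uniqueness argument applied at $x=c+v\in D$ (since $\rbr{c+v}-P_{C}\rbr{c+v}\in\closu{D-C}$ has norm $\gamma=\norm{v}$), so no real issue. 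Net trade-off: your version is longer but elementary and free of external citations; the paper's is shorter because it reuses \cref{eg:uPC} and two results from the literature.
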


\begin{proof}
  \cref{i:BCL-2004}: See \cite[Proposition~2.5(i)]{bauschke2004finding}.

  \cref{i:2sets-gap}: 
  Since $P_{D} = P_{C} + u$,
  \cref{eg:uPC} guarantees that 
     $u \in \rbr{C-C}^{\perp}$ and that 
     $D = C + u$.
  Hence, it suffices to show that $u=v$.
  Indeed,
  since $v= P_{\closu{D-C}}0 \in \closu{D-C}$,
  there exist sequences $\fa{c_{n}}{n \in \NN}$
  in $C$ and $\fa{d_{n}}{n \in \NN}$ in $D$
  such that $d_{n} - c_{n} \to v$.
  Thus, we deduce from \cref{i:BCL-2004} that
  \begin{equation}
    \label{eq:limit-gap}
    d_{n} - P_{C}d_{n} \to v.
  \end{equation}
  On the other hand, since $P_{D} = P_{C} + u$
  and $\fa{d_{n}}{n \in \NN}$ is a sequence in $D$,
  it follows that $\rbr{\forall n \in \NN}\, u = P_{D}d_{n} - P_{C}d_{n} = d_{n} - P_{C}d_{n}$.
  This and \cref{eq:limit-gap}
  yield $u=v$, as claimed.

  \cref{i:2sets-distant}: 
  Let $\fa{c_{n}}{n \in \NN}$ and 
  $\fa{d_{n}}{n \in \NN}$ be
  sequences in $C$ and $D$, respectively,
  such that $d_{n} - c_{n} \to v$.
  According to \cref{i:BCL-2004},
  $d_{n} - P_{C}d_{n} \to v$,
  and therefore,
  $\norm{d_{n} - P_{C}d_{n}} \to \norm{v}$.
  However, since $\rbr{\forall n \in \NN} \, 
  d_{n} \in D$,
  it follows from our assumption that
  $\rbr{\forall n \in \NN}\, 
  \gamma = \norm{P_{D}d_{n} - P_{C}d_{n}} 
  = \norm{d_{n} - P_{C}d_{n}}$.
  Hence, 
  invoking the assumption once more,
  $\rbr{\forall x \in \HH} \, 
  \norm{P_{\closu{D-C}}0} = 
  \norm{v} = \gamma = \norm{P_{D}x - P_{C}x}$.
  Consequently, since $\rbr{\forall x \in \HH} \,
  P_{D}x - P_{C}x \in \closu{D-C}$, 
  we conclude via 
  \cite[Lemma~2.4]{bauschke-moursi-2016} that $\rbr{\forall x \in \HH} \,
  P_{D}x - P_{C}x = P_{\closu{D-C}}0 = v$.
  Now apply \cref{i:2sets-gap}.
\end{proof}

Here is our main result
of this section.

\begin{theorem}[Convex combination
  of projectors]
  \label{t:convex-comb}
  Let $\fa{C_{i}}{i \in I}$ be a finite 
  family of nonempty closed convex
  subsets of $\HH$,
  let $k \in I$, and 
set $\rbr{\forall i \in I} \, v_{i} \coloneqq 
P_{\closu{C_{i}-C_{k}}}0$.
Then the following are equivalent: 
\begin{enumerate}
  \item\label{i:convex-1} 
        There exists $\fa{\alpha_{i}}{i \in I}$
  in $\left]0,1 \right]^{I}$
  such that $\sum_{i \in I}\alpha_{i}=1$
  and $\sum_{i \in I}\alpha_{i}P_{C_{i}} \in 
  \prH$.
  \item\label{i:convex-2} For every $\rbr{i,j} \in I \times I$,
    there exists $\alpha \in \RR \smallsetminus \left\{ 0,1 \right\}$
    such that $\rbr{1-\alpha}P_{C_{i}} + \alpha P_{C_{j}} \in 
    \prH$.
  \item\label{i:convex-3} 
        For every $i \in I$, 
        we have
        $v_{i} \in \rbr{C_{k}-C_{k}}^{\perp}$
        and $C_{i} = C_{k}+ v_{i}$.
  \item\label{i:convex-4} $\menge[\Big]{\sum_{i \in I}\alpha_{i}P_{C_{i}}}{
    \fa{\alpha_{i}}{i \in I} \in \RR^{I}\text{~and~} 
    \sum_{i \in I}\alpha_{i} = 1
  }\subset \prH$.
\end{enumerate}
Furthermore, each of the above implies that,
for every $\fa{\alpha_{i}}{i \in I} \in \RR^{I}$
such that $\sum_{i\in I}\alpha_{i}=1$, we have
\begin{equation}
      \label{eq:affine-comb}
      \sum_{ i\in I}\alpha_{i}P_{C_{i}} = P_{C_{k}
      + \sum_{i \in I}\alpha_{i}v_{i} }.
\end{equation}
\end{theorem}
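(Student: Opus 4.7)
The plan is to close a cycle of implications. Specifically, I would show $\cref{i:convex-4}\Rightarrow\cref{i:convex-1}\Rightarrow\cref{i:convex-3}\Rightarrow\cref{i:convex-4}$ together with the short branch $\cref{i:convex-4}\Rightarrow\cref{i:convex-2}\Rightarrow\cref{i:convex-3}$, and I would extract the formula \cref{eq:affine-comb} directly from the argument for $\cref{i:convex-3}\Rightarrow\cref{i:convex-4}$. The two implications out of \cref{i:convex-4} are essentially free: take $\alpha_{i}=1/\card I$ for $\cref{i:convex-1}$, and for each pair $(i,j)$ take, say, $\alpha = 1/2$ for $\cref{i:convex-2}$.

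To prove $\cref{i:convex-1}\Rightarrow\cref{i:convex-3}$, I would apply Zarantonello's fact (\cref{f:Zaran-convex-comb}) to the hypothesised identity $\sum_{i}\alpha_{i}P_{C_{i}} = P_{D}$ to obtain vectors $\fa{u_{i}}{i\in I}$ with $P_{C_{i}} = P_{D} + u_{i}$ and $\sum_{i\in I}\alpha_{i}u_{i} = 0$. Applying \cref{f:BCL-2004}\cref{i:2sets-gap} to each pair $(D,C_{i})$ yields $u_{i} \in (D-D)^{\perp}$ and $C_{i} = D + u_{i}$; in particular $C_{k}-C_{k}=D-D$. Writing $C_{i}-C_{k} = (D-D) + (u_{i}-u_{k})$ and using $u_{i}-u_{k} \in (D-D)^{\perp} \subset (\overline{D-D})^{\perp}$ together with $0 \in \overline{D-D}$, the variational characterisation of the metric projection identifies $v_{i}= P_{\overline{C_{i}-C_{k}}}0$ with $u_{i}-u_{k}$. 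Consequently $v_{i} \in (C_{k}-C_{k})^{\perp}$ and $C_{i} = C_{k}+v_{i}$, which is \cref{i:convex-3}.

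To prove $\cref{i:convex-2}\Rightarrow\cref{i:convex-3}$, I would fix $i \in I$: the case $i=k$ is trivial with $v_{k}=0$, while for $i \neq k$, \cref{i:convex-2} supplies $\alpha \in \RR\setminus\{0,1\}$ such that $(1-\alpha)P_{C_{i}} + \alpha P_{C_{k}} \in \prH$. Applying \cref{t:conic-comb} to this two-term combination (whose coefficients sum to $1$), condition \cref{eq:cond-conical} collapses to $-\alpha(1-\alpha)\,\q(P_{C_{i}}x - P_{C_{k}}x) = \gamma$ for a constant $\gamma$; since $\alpha(1-\alpha)\neq 0$, the map $x \mapsto \norm{P_{C_{k}}x - P_{C_{i}}x}$ is constant on $\HH$. \cref{f:BCL-2004}\cref{i:2sets-distant} applied to the pair $(C_{k},C_{i})$ then delivers $v_{i} \in (C_{k}-C_{k})^{\perp}$ and $C_{i}=C_{k}+v_{i}$, as required.

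Finally, to deduce \cref{i:convex-4} and the formula \cref{eq:affine-comb} from \cref{i:convex-3}, I would invoke \cref{eg:uPC} to write $P_{C_{i}} = v_{i} + P_{C_{k}}$ for each $i \in I$ (with $v_{k}=0$); for any $\fa{\alpha_{i}}{i\in I} \in \RR^{I}$ with $\sum_{i}\alpha_{i}=1$, summing gives $\sum_{i}\alpha_{i}P_{C_{i}} = w + P_{C_{k}}$, where $w\coloneqq \sum_{i}\alpha_{i}v_{i} \in (C_{k}-C_{k})^{\perp}$. A second application of \cref{eg:uPC} yields $w+P_{C_{k}} = P_{C_{k}+w}$, simultaneously establishing \cref{i:convex-4} and \cref{eq:affine-comb}. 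The principal technical obstacle is the identification $v_{i}=u_{i}-u_{k}$ in the $\cref{i:convex-1}\Rightarrow\cref{i:convex-3}$ step; it reduces to the orthogonality observation that projecting $0$ onto the translate $\overline{D-D}+(u_{i}-u_{k})$ is equivalent to projecting $-(u_{i}-u_{k})$ onto $\overline{D-D}$, which in turn equals $0$ since $u_{i}-u_{k}$ lies in its orthogonal complement.
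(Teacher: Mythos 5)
Your proposal is correct and follows essentially the same route as the paper: \cref{f:Zaran-convex-comb} combined with \cref{f:BCL-2004}\cref{i:2sets-gap} for \cref{i:convex-1}\ensuremath{\implies}\cref{i:convex-3}, \cref{t:conic-comb} combined with \cref{f:BCL-2004}\cref{i:2sets-distant} for \cref{i:convex-2}\ensuremath{\implies}\cref{i:convex-3}, and \cref{eg:uPC} for \cref{i:convex-3}\ensuremath{\implies}\cref{i:convex-4} together with \cref{eq:affine-comb}. The only cosmetic difference is that the paper rewrites $P_{C_{i}} = P_{C_{k}} + \rbr{u_{i}-u_{k}}$ and applies \cref{f:BCL-2004}\cref{i:2sets-gap} directly to the pair $\rbr{C_{k},C_{i}}$, which makes your separate identification $v_{i}=u_{i}-u_{k}$ automatic.
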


\begin{proof}
  ``\cref{i:convex-1}\ensuremath{\implies}\cref{i:convex-3}'': 
  Suppose that there exist 
  $\fa{\alpha_{i}}{i \in I} \in \left] 0,1 \right]^{I}$
  and a nonempty closed convex subset 
  $C$
  of $\HH$ such that 
  $\sum_{i \in I}\alpha_{i} = 1$ and 
  $\sum_{i \in I} \alpha_{i}P_{C_{i}} = P_{C}$.
  Then,
  since $\fa{P_{C_{i}}}{i \in I}$
  are firmly nonexpansive by \cite[Proposition~4.16]{bauschke2017convex},
\cref{f:Zaran-convex-comb} guarantees the
existence of vectors $\fa{u_{i}}{i \in I}$ in $\HH$
such that 
\begin{equation}
  \label{eq:Ci-C}
  \rbr{\forall i \in I} \quad  P_{C_{i}} = P_{C } + u_{i}.
\end{equation}
Now fix $ i \in I$. We then derive from  \cref{eq:Ci-C}
that $P_{C_{i}} = \rbr{P_{C_{k}} - u_{k}} + u_{i}
= P_{C_{k}} + u_{i} - u_{k}$,
and it thus follows from
 \cref{f:BCL-2004}\cref{i:2sets-gap}
 (applied to $\rbr{C_{k},C_{i},u_{i}-u_{k}}$) that 
  $v_{i} \in \rbr{C_{k}-C_{k}}^{\perp}$ and  
  $C_{i} = C_{k}+v_{i}$, as required.

``\cref{i:convex-3}\ensuremath{\implies}\cref{i:convex-4}'':
Let $\fa{\alpha_{i}}{i \in I} \in \RR^{I}$ be
such that $\sum_{i \in I}\alpha_{i} =1$. 
Then,
since $\rbr{\forall i \in I} \, v_{i} \in 
\rbr{C_{k} - C_{k}}^{\perp}$,
it follows that $\sum_{i \in I}\alpha_{i}v_{i} \in 
\rbr{C_{k} - C_{k}}^{\perp}$.
In turn,
because $\sum_{i \in I}\alpha_{i}=1$,
our assumption and 
\cref{eg:uPC} yield 
$\sum_{ i \in I}\alpha_{i}P_{C_{i}} = \sum_{i \in I}
\alpha_{i}P_{C_{k}+v_{i}}
= \sum_{i \in I}\alpha_{i}\rbr{P_{C_{k}} + v_{i}}
=P_{C_{k}} + \sum_{i \in I}\alpha_{i}v_{i} 
= P_{C_{k} + \sum_{i \in I}\alpha_{i}v_{i}}$, 
which establishes \cref{i:convex-4} and \cref{eq:affine-comb}.

``\cref{i:convex-4}\ensuremath{\implies}\cref{i:convex-1}'':
Clear.

At this point, we have shown that 
\begin{equation}
      \label{eq:equivalences}
      \cref{i:convex-1}\iff \cref{i:convex-3}\iff 
      \cref{i:convex-4}\implies\cref{eq:affine-comb}.
\end{equation}
To complete the proof,
we shall show that
\cref{i:convex-2}$\iff$\cref{i:convex-3}.

``\cref{i:convex-2}\ensuremath{\implies}\cref{i:convex-3}'':
Fix $i \in I$. Then, by assumption,
there exists $\alpha \in \RR\setminus\left\{ 0,1 \right\}$
such that 
$\rbr{1-\alpha}P_{C_{i}} + \alpha P_{C_{k}} \in 
\prH$.
Therefore,
applying \cref{t:conic-comb}
to $\rbr{C_{i},C_{k}}$
and the corresponding coefficients
$\rbr{1-\alpha,\alpha}$,
we deduce the existence of 
$\gamma \in \RR$ such that 
$\rbr{\forall x \in \HH} \, 
\rbr{1-\alpha}\alpha \norm{P_{C_{i}}x - P_{C_{k}}x}^{2} = \gamma$.
Thus, because $\rbr{1-\alpha}\alpha \neq 0$
due to the fact that $\alpha \in \RR \setminus \left\{ 0,1 \right\}$,
it follows that $\rbr{\forall x \in \HH} \, 
\norm{P_{C_{i}}x - P_{C_{k}}x}^{2} = 
\gamma \alpha^{-1}\rbr{1-\alpha}^{-1}$.
This and 
\cref{f:BCL-2004}\cref{i:2sets-distant} yield 
\cref{i:convex-3}.

``\cref{i:convex-3}\ensuremath{\implies}\cref{i:convex-2}'':
Suppose that \cref{i:convex-3} holds. Then,
due to \cref{eq:equivalences}, \cref{i:convex-4} holds,
from which \cref{i:convex-2} follows.
\end{proof}

The following example shows that
the conclusion of 
\cref{t:convex-comb} fails
if we
replace 
``convex combination'' by 
``affine combination''
in item \cref{i:convex-1}. 
\begin{example}
      Let $C$
      be a nonempty closed convex
      subset of $\HH$,
      and let $u \in \HH$. 
      Then 
      the affine combination
      of $\rbr{P_{C},P_{C},P_{ \left\{ u \right\}}}$
      with weights $\rbr{1/4,-1/4,1}$
      is a projector
      since
      $
      \rbr{1/4}P_{C} - \rbr{1/4}P_{C} + P_{ \left\{ u \right\} } 
      = P_{\left\{ u \right\} }$.
However, \cref{t:convex-comb}\cref{i:convex-3} fails
when $C$ is not a singleton.
\end{example}

Here are some direct consequences
of \cref{t:convex-comb}. 
\begin{corollary}
  \label{c:nonempty-intersection}
  Let $\fa{C_{i}}{i \in I}$ 
  be a finite family of nonempty closed
  convex subsets of $\HH$. Suppose that 
 $\cap_{i \in I}C_{i} \neq \varnothing$
 and that 
 there exists $\fa{\alpha_{i}}{i \in I} \in \left] 0,1\right]^{I}$
 such that $\sum_{i \in I}\alpha_{i} = 1$ and
 $\sum_{i \in I}\alpha_{i}P_{C_{i}} \in \prH$.
 Then $\rbr{\forall i \in I}\rbr{\forall j \in I} \, C_{i} = C_{j}$.
\end{corollary}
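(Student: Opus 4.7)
The plan is to feed the hypothesis straight into \cref{t:convex-comb} to obtain the decomposition $C_i = C_k + v_i$ with $v_i \in (C_k - C_k)^\perp$, then exploit the nonempty intersection to force each $v_i$ to lie in $C_k - C_k$ as well, from which $v_i = 0$ follows by self-orthogonality.

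More precisely, fix $k \in I$ arbitrarily and set $v_i \coloneqq P_{\closu{C_i - C_k}}0$ for every $i \in I$. The assumption of the corollary is exactly condition \cref{i:convex-1} of \cref{t:convex-comb} (with the given family and weights), so condition \cref{i:convex-3} provides
\begin{equation}
\rbr{\forall i \in I}\quad v_i \in \rbr{C_k - C_k}^{\perp} \quad \text{and} \quad C_i = C_k + v_i.
\end{equation}

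Now invoke the hypothesis $\bigcap_{i \in I} C_i \neq \varnothing$ to pick some $z \in \bigcap_{i \in I} C_i$. For any fixed $i \in I$, since $z \in C_i = C_k + v_i$, there exists $c \in C_k$ with $z = c + v_i$, whence $v_i = z - c$. Since also $z \in C_k$, this shows $v_i \in C_k - C_k$. Combined with $v_i \in (C_k - C_k)^\perp$, we obtain $\norm{v_i}^2 = \scal{v_i}{v_i} = 0$, so $v_i = 0$, and therefore $C_i = C_k$. As $i \in I$ was arbitrary, every $C_i$ equals $C_k$, which yields the claim.

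There is no real obstacle here: the substantive work has already been done in \cref{t:convex-comb}, and the corollary amounts to the short observation that the translation vectors $v_i$ cannot simultaneously be orthogonal to $C_k - C_k$ and lie in $C_k - C_k$ unless they vanish, which is precisely what the existence of a common point forces.
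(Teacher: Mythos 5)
Your proof is correct and follows essentially the same route as the paper: both apply the implication \cref{i:convex-1}\ensuremath{\implies}\cref{i:convex-3} of \cref{t:convex-comb}. The only cosmetic difference is how $v_i=0$ is obtained: the paper notes directly that a common point gives $0\in\closu{C_i-C_k}$, hence $v_i=P_{\closu{C_i-C_k}}0=0$, whereas you deduce $v_i\in C_k-C_k$ from the decomposition and then use orthogonality; both are fine.
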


\begin{proof}
      Let $k \in I$ and let $i \in I$.
      Since $C_{k} \cap C_{i} \neq \varnothing$
      by assumption, we see that
      $P_{\closu{C_{i}  - C_{k}}}0 = 0 $,
      and thus, 
      due to our assumption,
      the implication ``\cref{i:convex-1}\ensuremath{\implies}\cref{i:convex-3}''
      of \cref{t:convex-comb} yields $C_{i} = C_{k}$,
      as desired.
\end{proof}

\begin{corollary}
  \label{c:lines-pr}
  Let $C$ and $D$
  be nonempty closed convex subsets of $\HH$.
  Then the following are equivalent:
  \begin{enumerate}
    \item 
\begin{math}
  \rbr{\exists \alpha \in \RR \smallsetminus \left\{ 0,1 \right\}} \,
  \rbr{1-\alpha} P_{C} + \alpha P_{D} \in \prH.
\end{math}
\item 
\begin{math}
  \rbr{\forall \alpha \in \RR} ~ \rbr{1-\alpha} P_{C} + \alpha P_{D} 
  \in \prH.
\end{math}
\item 
  $P_{\closu{D-C}}0 \in \rbr{C-C}^{\perp}$
  and 
  $ D=  C + P_{\closu{D-C}}0 $.
  \end{enumerate}
\end{corollary}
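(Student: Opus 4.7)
The plan is to derive this as a direct specialization of \cref{t:convex-comb} applied to the two-set family with $I=\{1,2\}$, $C_{1}=C$, $C_{2}=D$, and anchor index $k=1$. Under this choice, the two auxiliary vectors defined in the theorem become $v_{1}=P_{\closu{C-C}}0=0$, since $0 \in C-C$, while $v_{2}=P_{\closu{D-C}}0$ is precisely the vector appearing in our condition~(iii).

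I would then verify that conditions (i), (ii), (iii) of the corollary correspond respectively to items \cref{i:convex-2}, \cref{i:convex-4}, and \cref{i:convex-3} of the theorem. The correspondence with item \cref{i:convex-3} is the clearest: its $i=1$ clause is automatic because $v_{1}=0$ and $C=C+0$, so the full content reduces to the $i=2$ clause, which is exactly our condition~(iii). The correspondence with item \cref{i:convex-4} is also immediate, since for $I=\{1,2\}$ the family of affine combinations $\sum_{i \in I}\alpha_{i}P_{C_{i}}$ with $\sum_{i \in I}\alpha_{i}=1$ coincides with the family $(1-\alpha)P_{C}+\alpha P_{D}$ for $\alpha\in\RR$, which is our condition~(ii).

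The only step requiring care is the correspondence between our condition~(i) and item \cref{i:convex-2}, which quantifies over every pair $(i,j)\in I\times I$. The pairs $(1,1)$ and $(2,2)$ hold trivially since $P_{C},P_{D}\in\prH$, and the pair $(2,1)$ reduces to $(1,2)$ via the substitution $\alpha\mapsto 1-\alpha$ (the identity $(1-\alpha)P_{D}+\alpha P_{C}=(1-\beta)P_{C}+\beta P_{D}$ with $\beta=1-\alpha$ preserves membership in $\RR\setminus\{0,1\}$). Hence the universal quantifier in item \cref{i:convex-2} collapses to the single-pair assertion of our condition~(i). Once these three correspondences are in place, the equivalence of the items of the corollary follows directly from the equivalences $\cref{i:convex-2}\iff\cref{i:convex-3}\iff\cref{i:convex-4}$ established in \cref{t:convex-comb}. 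There is no genuine obstacle here; the argument is essentially a bookkeeping exercise of the quantifier structure in the two-set case.
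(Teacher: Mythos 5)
Your proposal is correct and takes essentially the same route as the paper, which likewise obtains the corollary directly from the equivalences \cref{i:convex-2}\ensuremath{\iff}\cref{i:convex-3}\ensuremath{\iff}\cref{i:convex-4} of \cref{t:convex-comb} specialized to the two-set family. Your explicit bookkeeping (observing $v_{1}=P_{\closu{C-C}}0=0$, the trivial diagonal pairs, and the symmetry $\alpha\mapsto 1-\alpha$ collapsing the quantifier in item \cref{i:convex-2}) just spells out what the paper leaves implicit.
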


\begin{proof}
      This follows from the equivalences 
      ``\cref{i:convex-2}\ensuremath{\iff}\cref{i:convex-3}\ensuremath{\iff}\cref{i:convex-4}''
      of \cref{t:convex-comb}.
\end{proof}

We now specialize \cref{c:lines-pr}
to get a result on scalar multiples of
projectors.

\begin{corollary}
  \label{c:scalar-PC}
  Let $C$ be a nonempty closed
  convex set in $\HH$,
  and let $\alpha \in \RR\smallsetminus\left\{ 0,1 \right\}$.
  Then $\alpha P_{C} \in \prH$
  if and only if $C$ is a singleton. 
\end{corollary}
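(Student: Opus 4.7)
The plan is to reduce \cref{c:scalar-PC} to \cref{c:lines-pr} via the observation that $P_{\{0\}} = 0$, so that for every $\alpha \in \RR \smallsetminus \{0,1\}$,
\begin{equation}
      \alpha P_{C} = (1-\alpha) P_{\{0\}} + \alpha P_{C}.
\end{equation}
Thus $\alpha P_{C} \in \prH$ is precisely the hypothesis of \cref{c:lines-pr} applied to the pair of nonempty closed convex sets $(\{0\}, C)$ with weight $\alpha$.

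For the forward implication, I invoke the equivalence between items (i) and (iii) of \cref{c:lines-pr}. Condition (iii) there becomes
\begin{equation}
      P_{\closu{C - \{0\}}}0 \in (\{0\} - \{0\})^{\perp} \quad \text{and} \quad C = \{0\} + P_{\closu{C - \{0\}}}0.
\end{equation}
Since $C$ is already closed, $\closu{C - \{0\}} = C$, and since $(\{0\} - \{0\})^{\perp} = \{0\}^{\perp} = \HH$, the first condition is vacuous. The second condition reduces to $C = \{P_{C}0\}$, which forces $C$ to be a singleton.

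For the converse, if $C = \{c\}$ is a singleton, then $P_{C}$ is the constant map $x \mapsto c$, so $\alpha P_{C}$ is the constant map $x \mapsto \alpha c$, which coincides with $P_{\{\alpha c\}}$; in particular $\alpha P_{C} \in \prH$. This direction is elementary and does not require the corollary. There is no serious obstacle here since all the heavy lifting is already contained in \cref{c:lines-pr}; the only point to verify is that the trivial choice $\{0\}$ is a legitimate nonempty closed convex set and that the condition $\alpha \in \RR \smallsetminus \{0,1\}$ matches the hypothesis of \cref{c:lines-pr}, both of which are immediate.
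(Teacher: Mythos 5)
Your proof is correct and is essentially the paper's own argument: the paper proves this corollary by simply taking $D=\left\{ 0 \right\}$ in \cref{c:lines-pr}, exactly your reduction. The only (cosmetic) difference is that you place $\left\{ 0 \right\}$ in the first slot, so the weight is $\alpha$ rather than $1-\alpha$ and the orthogonality condition in item (iii) becomes vacuous; both routes immediately force $C=\left\{ P_{C}0 \right\}$, and your direct verification of the converse is fine.
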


\begin{proof}
      Let $D = \left\{ 0 \right\}$ in \cref{c:lines-pr}.
\end{proof}


\section{The partial sum 
	property of projectors onto
	convex cones}
\label{sect:partial-sum}

In this section,
we shall discuss
the 
partial sum property
and the
connections
between our work,
Zarantonello's 
\cite[Theorems~5.5 and 5.3]{zarontello1971projections-partI},
and the recent work \cite{bartz2017resolvent}. 
We shall need the following
two results. 
Let us 
provide an instance
where the 
\emph{star-difference}
of two sets (see \cite{hiriart2013convex})
can be explicitly determined. 
\cref{l:star-dif} was mentioned 
in 
\cite[Footnote~5]{bartz2017resolvent} 
and 
was also
stated implicitly
in the proof
of \cite[Theorem~5.2]{zarontello1971projections-partI}.

\begin{lemma}[Star-difference of cones]\label{l:star-dif}
	Let $K_{1}$
	and $K_{2}$
	be nonempty closed convex 
	cones in $\HH$,
	and set
	\begin{equation}
	K \coloneqq \menge{ u \in \HH }{ u + K_{2} \subseteq K_{1} }.
	\end{equation}
	Then the following are 
	equivalent: 
	\begin{enumerate}
		\item\label{i:star-dif1} $K=K_{1}$.
		\item\label{i:star-dif2} $0 \in K$.
		\item\label{i:star-dif3} $K \neq \varnothing$.
		\item\label{i:star-dif4} $K_{2} \subseteq K_{1}$.	
	\end{enumerate}
\end{lemma}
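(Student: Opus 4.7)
The plan is to prove the cycle \cref{i:star-dif1}$\Rightarrow$\cref{i:star-dif2}$\Rightarrow$\cref{i:star-dif3}$\Rightarrow$\cref{i:star-dif4}$\Rightarrow$\cref{i:star-dif1}. The easy implications are \cref{i:star-dif1}$\Rightarrow$\cref{i:star-dif2} (since $0 \in K_{1}$ because $K_{1}$ is a nonempty closed convex cone) and \cref{i:star-dif2}$\Rightarrow$\cref{i:star-dif3} (trivial). Moreover, \cref{i:star-dif2}$\Leftrightarrow$\cref{i:star-dif4} is immediate from the very definition of $K$, which incidentally packages \cref{i:star-dif3}$\Rightarrow$\cref{i:star-dif2} into the task \cref{i:star-dif3}$\Rightarrow$\cref{i:star-dif4}.

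For \cref{i:star-dif3}$\Rightarrow$\cref{i:star-dif4}, I would start with an arbitrary $u \in K$ and leverage that $K_{1}$ and $K_{2}$ are cones. Since $K_{2}$ is a cone, $\lambda K_{2} = K_{2}$ for every $\lambda \in \RPP$, and since $K_{1}$ is a cone, $\lambda K_{1} = K_{1}$ for every $\lambda \in \RPP$. Therefore, for every $\lambda \in \RPP$,
\begin{equation}
\lambda u + K_{2} = \lambda u + \lambda K_{2} = \lambda \rbr{u + K_{2}} \subseteq \lambda K_{1} = K_{1}.
\end{equation}
Now fix $v \in K_{2}$. The above yields $\lambda u + v \in K_{1}$ for every $\lambda \in \RPP$. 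Letting $\lambda \downarrow 0$ and invoking the closedness of $K_{1}$ gives $v \in K_{1}$. Hence $K_{2} \subseteq K_{1}$, which is \cref{i:star-dif4}.

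Finally, for \cref{i:star-dif4}$\Rightarrow$\cref{i:star-dif1}, I argue both inclusions. For $K_{1} \subseteq K$, take $u \in K_{1}$; since $K_{2} \subseteq K_{1}$ and $K_{1}$ is a convex cone (so $K_{1} + K_{1} \subseteq K_{1}$), we have $u + K_{2} \subseteq K_{1} + K_{1} \subseteq K_{1}$, whence $u \in K$. Conversely, for $K \subseteq K_{1}$, take $u \in K$; since $0 \in K_{2}$, the definition of $K$ forces $u = u + 0 \in K_{1}$.

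The only non-routine step is \cref{i:star-dif3}$\Rightarrow$\cref{i:star-dif4}, and the crux there is the scaling-plus-limit argument that pushes an arbitrary witness $u \in K$ to the origin while keeping $K_{2}$ trapped inside $K_{1}$; this is precisely where both the cone structure of $K_{1}, K_{2}$ and the closedness of $K_{1}$ are used.
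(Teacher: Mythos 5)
Your proposal is correct and follows essentially the same route as the paper: the key step \cref{i:star-dif3}$\Rightarrow$\cref{i:star-dif4} uses the identical scaling argument $\lambda u + K_{2} = \lambda\rb{u+K_{2}} \subseteq \lambda K_{1} = K_{1}$ followed by letting $\lambda \downarrow 0$ and invoking the closedness of $K_{1}$, and the implication \cref{i:star-dif4}$\Rightarrow$\cref{i:star-dif1} is argued by the same two inclusions (using $K_{1}+K_{1}\subseteq K_{1}$ and $0 \in K_{2}$). The only cosmetic difference is that you organize the statements as a cycle with the extra observation \cref{i:star-dif2}$\iff$\cref{i:star-dif4}, whereas the paper dispatches \cref{i:star-dif1}$\Rightarrow$\cref{i:star-dif2}$\Rightarrow$\cref{i:star-dif3} as clear and then proves \cref{i:star-dif3}$\Rightarrow$\cref{i:star-dif4}$\Rightarrow$\cref{i:star-dif1}.
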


\begin{proof}
	The 
	chain of 
	implications ``\cref{i:star-dif1}\ensuremath{\implies}\cref{i:star-dif2}\ensuremath{\implies}\cref{i:star-dif3}''
	is clear.
	
	``\cref{i:star-dif3}\ensuremath{\implies}\cref{i:star-dif4}'': 
	Fix $u \in K$. 
	Then, since $K_{1}$ and $K_{2}$
	are cones, we infer that
      $\rb{\forall \varepsilon \in \RPP}\,
	\varepsilon u + K_{2} = \varepsilon \rb{u + K_{2}} 
	\subseteq \varepsilon K_{1} = K_{1}.$
	In turn, letting $\varepsilon \downarrow 0$
	and using the closedness of $K_{1}$,
	we obtain $K_{2} \subseteq K_{1}$.
	
	``\cref{i:star-dif4}\ensuremath{\implies}\cref{i:star-dif1}'': 
	First, take 
	$ u \in K_{1}$.
	Since  $K_{2} \subseteq K_{1}$ 
	and $K_{1}$ is a convex cone by
	assumption,
	it follows that
	$u + K_{2} \subseteq K_{1}  + K_{1} \subseteq K_{1} $,
	and therefore $u \in K$. 
	Conversely, fix $u \in K$. 
	Because $u + K_{2} \subseteq K_{1}$
	and $0 \in K_{2}$,
	we deduce that $u \in K_{1}$,
	which completes the proof.
\end{proof}

\begin{proposition}\label{p:compute-sth}
	Let $C$ and $D$
	be nonempty closed convex 
	subsets of $\HH$, 
	and set 
	\begin{equation}\label{e:conju-defn-f}
	f \coloneqq \tfrac{1}{2}d_{C}^{2} + \tfrac{1}{2}d_{D}^{2} - \q
	\quad 
	\text{and}
	\quad 
	h \coloneqq f^{\ast} - \q.
	\end{equation}
	Then the following hold:
	\begin{enumerate}
		\item\label{i:compute1}
			\begin{math}
			\rb{\forall u \in \HH}
                  \,
			h\rb{u}
			= \sup_{v \in D}  \big( \sigma_{C} \rb{u+v} + \scal{u}{v} \big) .
			\end{math}
		\item\label{i:compute2}
		Suppose that $C$ and $D$ are cones 
		and $D \subseteq \pc{C}.$
		Then \begin{math}
			h  = \iota_{\pc{C} \cap \pc{D} }.
		\end{math}
	\end{enumerate}
	
\end{proposition}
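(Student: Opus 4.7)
The plan is to prove part \cref{i:compute1} by a direct conjugacy computation, and then to specialize to the conical case in part \cref{i:compute2} using that the support function of a nonempty closed convex cone is the indicator of its polar cone.

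For part \cref{i:compute1}, I would rewrite $f$ by expanding $\tfrac{1}{2}d_{D}^{2}(x) = \inf_{v \in D}\bigl(\tfrac{1}{2}\norm{x}^{2} - \scal{x}{v} + \tfrac{1}{2}\norm{v}^{2}\bigr)$, which gives
\begin{equation*}
f(x) = \tfrac{1}{2}d_{C}^{2}(x) - \sup_{v \in D}\rbr[\big]{\scal{x}{v} - \tfrac{1}{2}\norm{v}^{2}}.
\end{equation*}
Taking the Fenchel conjugate, swapping the two suprema, and using that $\rbr{\tfrac{1}{2}d_{C}^{2}}^{\ast} = (\env \iota_{C})^{\ast} = \iota_{C}^{\ast} + \q = \sigma_{C} + \q$ (via \cite[Proposition~13.24(i) and Example~13.3(i)]{bauschke2017convex}), I obtain
\begin{equation*}
f^{\ast}(u) = \sup_{v \in D}\rbr[\big]{\sigma_{C}(u+v) + \q(u+v) - \q(v)} = \q(u) + \sup_{v \in D}\rbr[\big]{\sigma_{C}(u+v) + \scal{u}{v}}.
\end{equation*}
Subtracting $\q(u)$ yields the claimed formula for $h$.

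For part \cref{i:compute2}, since $C$ is a nonempty closed convex cone, $\sigma_{C} = \iota_{\pc{C}}$, so part \cref{i:compute1} gives $h(u) = \sup\menge{\scal{u}{v}}{v \in D,\; u+v \in \pc{C}}$. I would now split into two cases. If $u \in \pc{C} \cap \pc{D}$, then $v=0 \in D$ is admissible (so $h(u) \geq 0$), and for every admissible $v \in D$ we have $\scal{u}{v} \leq 0$ since $u \in \pc{D}$; hence $h(u)=0$. Otherwise, either $u \notin \pc{C}$ — in which case $v=0$ already gives $\sigma_{C}(u) = +\infty$ — or $u \notin \pc{D}$; in the latter situation I would pick $v_{0} \in D$ with $\scal{u}{v_{0}} > 0$ and exploit that $D \subseteq \pc{C}$ is a cone so that $\lambda v_{0} \in D \subseteq \pc{C}$ for every $\lambda \in \RP$, and observe that if $u \in \pc{C}$ then $u + \lambda v_{0} \in \pc{C}$ (convex cones are closed under addition), while if $u \notin \pc{C}$ we are already in the previous case; thus $h(u) \geq \sup_{\lambda \in \RP}\lambda \scal{u}{v_{0}} = +\infty$. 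In every case $h(u) = \iota_{\pc{C}\cap\pc{D}}(u)$.

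The only subtle point is the case analysis in part \cref{i:compute2}, specifically making sure to produce admissible $v$'s achieving $+\infty$ when $u \notin \pc{C} \cap \pc{D}$; using $v=0$ as a distinguished admissible point and exploiting conicity to scale $v_{0}$ handles both obstructions cleanly.
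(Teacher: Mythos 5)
Your argument is correct, and for \cref{i:compute1} it is essentially the paper's: you express $\q-\tfrac{1}{2}d_{D}^{2}$ as $x\mapsto\sup_{v\in D}\rb{\scal{x}{v}-\q\rb{v}}$ (Moreau's decomposition, which the paper writes as $\q-\tfrac{1}{2}d_{D}^{2}=\iota_{D}^{\ast}\einfconv\q$) and then conjugate the difference; where the paper invokes \cite[Proposition~14.19]{bauschke2017convex}, you carry out the sup--sup interchange by hand, which is legitimate here because $f$ is real-valued, so no $\infty-\infty$ ambiguity arises. The real divergence is in \cref{i:compute2}. The paper first applies \cref{l:star-dif} to the pair $\rb{\pc{C},D}$ to obtain $\pc{C}=\menge{u \in \HH}{u+D\subseteq \pc{C}}$ and then splits on whether $u\in\pc{C}$; you bypass the star-difference lemma entirely: for $u\notin\pc{C}$ the term at $v=0\in D$ (recall $0\in D$ because $D$ is a nonempty closed cone) already equals $+\infty$, and for $u\in\pc{C}\setminus\pc{D}$ you scale along a ray $\RP v_{0}\subseteq D\subseteq\pc{C}$ to push the supremum to $+\infty$. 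This is more elementary and self-contained; what the paper's route buys is the star-difference identity itself, which is of independent interest, rather than a shorter proof of \cref{i:compute2}.

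One small imprecision to fix: the rewriting $h\rb{u}=\sup\menge{\scal{u}{v}}{v\in D,\; u+v\in\pc{C}}$ is not literally an identity --- if some $v\in D$ has $u+v\notin\pc{C}$, the left-hand side is $+\infty$, while the restricted supremum simply drops that term. You only use the ``$\leq$'' direction of this rewriting in the case $u\in\pc{C}\cap\pc{D}$, and there it is true, but you should say why: $u+D\subseteq\pc{C}+\pc{C}\subseteq\pc{C}$ (exactly the cone arithmetic you use a line later), so every $v\in D$ is admissible and $h\rb{u}=\sup_{v\in D}\scal{u}{v}=\sigma_{D}\rb{u}=0$. With that one line added, the proof is complete.
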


\begin{proof}
	\cref{i:compute1}: 
	Since
	$D$ is convex, closed, and nonempty,
	we see that $\iota_{D} \in \varGamma_{0}\rb{\HH}$,
	and so	$\rb{1/2}d_{D}^{2} = \iota_{D} \infconv \q = \iota_{D}\einfconv \q $
	by \cite[Example~12.21
	and Proposition~12.15]{bauschke2017convex}.
	In turn, 
	Moreau's decomposition
	asserts that 
	\begin{math}
	\q - \rb{1/2}d_{D}^{2} = \q - \iota_{D} \einfconv \q 
	= \iota_{D}^{\ast} \einfconv \q.
	\end{math}
	Thus,  
	\cref{e:conju-defn-f} yields
	\begin{equation}
	f = \tfrac{1}{2}d_{C}^{2}  - \iota_{D}^{\ast} \einfconv \q.
	\end{equation}
	Moreover, since $\iota_{D} \in \varGamma_{0}\rb{\HH}$
	and $\q^{\ast} = \q$,
	\cite[Proposition~13.24(i)]{bauschke2017convex}
	and 
	the Fenchel{\textendash}Moreau
	theorem 
	guarantee that 
	\begin{math}
	\rb{\iota_{D}^{\ast} \einfconv \q  }^{\ast} = \iota_{D}^{\ast\ast} + \q^{\ast}
	= \iota_{D} + \q,
	\end{math}
	which implies that $\dom \rb{\iota_{D}^{\ast} \einfconv \q  }^{\ast} = D $. 
	Consequently, because $\iota_{D}^{\ast} \einfconv \q \in \varGamma_{0}\rb{\HH}$,
	\cite[Proposition~14.19 and Example~13.27(iii)]{bauschke2017convex}
	imply that 
	\begin{subequations}
		\begin{align}
		\rb{\forall u \in \HH}
		\quad 
		f^{\ast}\rb{u} - \q\rb{u} 
		& =\rb*{\tfrac{1}{2}d_{C}^{2}  - \iota_{D}^{\ast} \einfconv \q}^{\ast} \rb{u}
		- \q\rb{u} \\
		& = \sup_{v \in \dom \rb{\iota_{D}^{\ast} \einfconv \q  }^{\ast}} 
		\rb*{ \rb*{\tfrac{1}{2}d_{C}^{2} }^{\ast} \rb{u+v}
			-  \rb{\iota_{D}^{\ast}
				\einfconv \q  }^{\ast}\rb{v} } - \q\rb{u} \\
		& = \sup_{v \in D} ~ \Big(\sigma_{C}\rb{u+v} + \q\rb{u+v} 
		- \q\rb{v} \Big)    - \q\rb{u} \\
		& =  \sup_{v \in D} ~ \Big( \sigma_{C} \rb{u+v} + \scal{u}{v} \Big),
		\end{align}
	\end{subequations}
	as announced.
	
	\cref{i:compute2}: First, because 
	$D \subseteq \pc{C}$, 
	\cref{l:star-dif} (applied to 
	the pair of closed convex cones 
	$\rb{\pc{C},D}$)
	yields 
		\begin{equation}\label{e:star-CD}
			\pc{C} = \menge{u \in \HH}{ u + D \subseteq \pc{C}}.
		\end{equation}
	Next, we derive from \cref{i:compute1}
	and \cite[Example~13.3(ii)]{bauschke2017convex}
	that 
		\begin{equation}\label{e:comp-h-cone2}
			\rb{\forall u \in \HH}
			\quad 
			h\rb{u} = \sup_{v \in D}~
			\Big( \sigma_{C}\rb{u+v} + \scal{u}{v}  \Big)
			= \sup_{v \in D} ~
			\Big( \iota_{ C^{\ominus} }\rb{u+v} + \scal{u}{v}  \Big)  .
		\end{equation}
	Now fix $u \in \HH$, and let
	us consider two alternatives.
	
	(a) $u \in \HH\smallsetminus \pc{C}$: 
	In view of \cref{e:star-CD},
	there exists $v \in D$
	such that $u + v \in \HH\smallsetminus \pc{C}$,
	and therefore, 
	by  \cref{e:comp-h-cone2}, 
	\begin{math}
		h\rb{u}
		\geq 
		\iota_{ C^{\ominus} }\rb{u +v} + \scal{u}{v} = +\infty.
	\end{math}

	(b) $u \in \pc{C}$: 
	Then, by  \cref{e:star-CD}, $u + D \subseteq \pc{C}$.
	Hence, 
	since $D$ is a nonempty cone, 
	it follows from 
	\cref{e:comp-h-cone2}
	and \cite[Example~13.3(ii)]{bauschke2017convex}
	that 
	\begin{math}
		h\rb{u} = \sup_{v \in D} \scal{u}{v} 
		= \sigma_{D}\rb{u}
		= \iota_{\pc{D}}\rb{u}
		= \iota_{\pc{C} \cap \pc{D}}\rb{u}.
	\end{math}
	
	Altogether, we
	obtain 
	the desired conclusion. 
\end{proof}

Here is
the first main
result of this section. 
The proof
of the implication 
``\cref{i:sum-pr-cone4}\ensuremath{\implies}\cref{i:sum-pr-cone0}''
was inspired by 
\cite[Lemma~5.3]{bartz2017resolvent}.

\begin{theorem}\label{t:two-cones-pr}
	Let $K_{1}$
	and $K_{2}$ be nonempty closed
	convex cones in $\HH$.
	Then the following are
	equivalent: 
	\begin{enumerate}
		\item \label{i:sum-pr-cone0}
		$K_{1} + K_{2}$
		is closed and 
		$P_{K_{1}} + P_{K_{2}} = P_{K_{1} + K_{2} } .$
		\item \label{i:sum-pr-cone1}
		There exists a nonempty closed
		convex cone $K$ 
		such that $P_{K_{1}} + P_{K_{2}} = P_{K}$.
		\item 
		\label{i:sum-pr-cone2}
		$P_{K_{1}} + P_{K_{2}}$
		is a proximity operator of a
		function in $\varGamma_{0}\rb{\HH}$.
		\item\label{i:sum-pr-cone3}  $P_{K_{1}} + P_{K_{2}}$
		is  nonexpansive.
          \item \label{i:sum-pr-cone4}${\Id} - P_{K_{1}} - P_{K_{2}}$
		is monotone.
		\item\label{i:sum-pr-cone5} \begin{math}
                \rb{\forall x \in \HH}\,
		\scal{P_{K_{1}  } x}{ P_{K_{2}} x} =0.
		\end{math}
	\end{enumerate}
	Furthermore,
	if  one of \cref{i:sum-pr-cone0,i:sum-pr-cone1,i:sum-pr-cone2,i:sum-pr-cone3,i:sum-pr-cone4,i:sum-pr-cone5} holds,
	then 
	\begin{equation}\label{e:dist-cone}
	d_{K_{1} + K_{2} }^{2} = d_{K_{1}}^{2} + d_{K_{2}}^{2} -2\q 
	= d_{K_{1}}^{2} - d_{K_{2}^{\ominus} }^{2} 
	= d_{K_{2}}^{2}- d_{K_{1}^{\ominus}}^{2}.
	\end{equation}
\end{theorem}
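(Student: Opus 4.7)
The plan is to close the cyclic chain of implications \cref{i:sum-pr-cone0}$\Rightarrow$\cref{i:sum-pr-cone1}$\Rightarrow$\cref{i:sum-pr-cone2}$\Rightarrow$\cref{i:sum-pr-cone3}$\Rightarrow$\cref{i:sum-pr-cone4}$\Rightarrow$\cref{i:sum-pr-cone0}, and to handle \cref{i:sum-pr-cone5} separately as a translation of \cref{i:sum-pr-cone0} via \cref{t:sum-many}. The first four links are routine bookkeeping: for \cref{i:sum-pr-cone0}$\Rightarrow$\cref{i:sum-pr-cone1} take $K \coloneqq K_{1}+K_{2}$; for \cref{i:sum-pr-cone1}$\Rightarrow$\cref{i:sum-pr-cone2} use $P_{K}=\prox_{\iota_{K}}$; for \cref{i:sum-pr-cone2}$\Rightarrow$\cref{i:sum-pr-cone3} recall that proximity operators are firmly nonexpansive, hence nonexpansive; and for \cref{i:sum-pr-cone3}$\Rightarrow$\cref{i:sum-pr-cone4} combine $\scal{\rb{\Id-T}x-\rb{\Id-T}y}{x-y}=\norm{x-y}^{2}-\scal{Tx-Ty}{x-y}$ with Cauchy--Schwarz to see that nonexpansiveness of $T\coloneqq P_{K_{1}}+P_{K_{2}}$ forces $\Id-T$ to be monotone.

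The heart of the argument is \cref{i:sum-pr-cone4}$\Rightarrow$\cref{i:sum-pr-cone0}, which I will carry out in three stages. Setting $y=0$ in the monotonicity of $\Id-T$, using $T0=0$, and invoking \cref{l:cone-identities}\cref{i:cone-identities1} gives
$\norm{x}^{2}\geq \norm{P_{K_{1}}x}^{2}+\norm{P_{K_{2}}x}^{2}$
for every $x\in\HH$; specializing to $x=u\in K_{1}$ then forces $P_{K_{2}}u=0$, i.e., $K_{1}\subseteq \pc{K_{2}}$. Next I will introduce the auxiliary function $f\coloneqq \tfrac{1}{2}d_{K_{1}}^{2}+\tfrac{1}{2}d_{K_{2}}^{2}-\q$: by \cref{t:main} applied to each $P_{K_{i}}$, $f$ is Fr\'{e}chet differentiable with $\nabla f=\Id-T$; \cref{i:sum-pr-cone4} and \cite[Proposition~17.7]{bauschke2017convex} then yield convexity of $f$, so that $f\in\varGamma_{0}\rb{\HH}$.

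The decisive algebraic step is \cref{p:compute-sth}\cref{i:compute2} applied to $(C,D)=(K_{2},K_{1})$, which yields $f^{\ast}=\q+\iota_{\pc{K_{1}}\cap\pc{K_{2}}}$. Taking the Fenchel--Moreau biconjugate and combining Moreau's conical decomposition with the bipolar identity $\rb{\pc{K_{1}}\cap\pc{K_{2}}}^{\ominus}=\closu{K_{1}+K_{2}}$, I will obtain $f=\tfrac{1}{2}d_{\closu{K_{1}+K_{2}}}^{2}$. Differentiating and matching with $\nabla f=\Id-T$ produces $T=P_{\closu{K_{1}+K_{2}}}$, and \cref{p:sumCi}\cref{i:set-sum2} then forces $K_{1}+K_{2}$ to be closed and to coincide with $\closu{K_{1}+K_{2}}$, delivering \cref{i:sum-pr-cone0}.

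Finally, for the equivalence with \cref{i:sum-pr-cone5}, \cref{t:sum-many} with $I=\{1,2\}$ applied to the conclusion \cref{i:sum-pr-cone0} produces a constant $\gamma\in\RR$ with $2\scal{P_{K_{1}}x}{P_{K_{2}}x}=\gamma$ identically; evaluating at $x=0$ pins down $\gamma=0$, giving \cref{i:sum-pr-cone5}; conversely, the same theorem with $\gamma=0$ returns \cref{i:sum-pr-cone0} from \cref{i:sum-pr-cone5}. The distance identities in \cref{e:dist-cone} then come for free: $d_{K_{1}+K_{2}}^{2}=d_{K_{1}}^{2}+d_{K_{2}}^{2}-2\q$ is read off from $f=\tfrac{1}{2}d_{\closu{K_{1}+K_{2}}}^{2}$ and the definition of $f$, and the remaining two equalities follow by substituting $d_{K_{i}}^{2}=2\q-d_{\pc{K_{i}}}^{2}$ from Moreau's conical decomposition. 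The step I expect to be most delicate is verifying the biconjugate computation in the third stage, in particular ensuring that the hypothesis $K_{1}\subseteq\pc{K_{2}}$ gained from \cref{i:sum-pr-cone4} correctly matches the hypothesis needed for \cref{p:compute-sth}\cref{i:compute2} and that the bipolar identity applies to the closed convex cone $\pc{K_{1}}\cap\pc{K_{2}}$.
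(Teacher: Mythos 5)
Your proposal is correct and follows essentially the same route as the paper: the cycle \cref{i:sum-pr-cone0}$\Rightarrow$\cref{i:sum-pr-cone1}$\Rightarrow$\cref{i:sum-pr-cone2}$\Rightarrow$\cref{i:sum-pr-cone3}$\Rightarrow$\cref{i:sum-pr-cone4}$\Rightarrow$\cref{i:sum-pr-cone0} with the hard step handled by introducing $f=\tfrac{1}{2}d_{K_{1}}^{2}+\tfrac{1}{2}d_{K_{2}}^{2}-\q$, proving $f\in\lscc\rb{\HH}$ via \cref{t:main} and monotonicity, computing $f^{\ast}$ through \cref{p:compute-sth}\cref{i:compute2}, biconjugating, and then invoking \cref{p:sumCi}\cref{i:set-sum2}, exactly as in the paper. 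The only (harmless) variations are that you obtain the polarity relation $K_{1}\subseteq\pc{K_{2}}$ by the elementary evaluation of monotonicity at $\rb{x,0}$ together with \cref{l:cone-identities}\cref{i:cone-identities1}, where the paper instead cites Zarantonello's Lemma~5.6, and that you identify $T$ directly as $P_{\closu{K_{1}+K_{2}}}$ at the level of $f$ rather than first writing $\Id-T=P_{\pc{K_{1}}\cap\pc{K_{2}}}$ and applying Moreau's decomposition.
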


\begin{proof}
	The chain of implications
	``\cref{i:sum-pr-cone0}\ensuremath{\implies}\cref{i:sum-pr-cone1}\ensuremath{\implies}\cref{i:sum-pr-cone2}\ensuremath{\implies}\cref{i:sum-pr-cone3}''
	is clear, 
	and
	the implication 
	``\cref{i:sum-pr-cone3}\ensuremath{\implies}\cref{i:sum-pr-cone4}''
	follows from \cite[Example~20.7]{bauschke2017convex}.
	We now assume that 
	\cref{i:sum-pr-cone4} holds
	and establish \cref{i:sum-pr-cone0}. 
	Towards this end, set 
	\begin{equation}
	f \coloneqq 
	\tfrac{1}{2}d_{K_{1}}^{2} 
	+ \tfrac{1}{2}d_{K_{2}}^{2} 
	- \q.
	\end{equation}
	and set  
	\begin{equation}\label{e:cone-defn-h}
	h \coloneqq f^{\ast} -\q.
	\end{equation}
	Let us first establish that 
	\begin{math}
	h = \iota_{K_{1}^{\ominus} \cap K_{2}^{\ominus} }.
	\end{math}
	To do so, we derive from 
      the monotonicity of ${\Id} - P_{K_{1}} - P_{K_{2}}$
	and Moreau's conical decomposition
	that 
	\begin{equation}
	\rb{\forall x \in \HH}
	\quad 
	\scal{ x }{ P_{K_{1}^{\ominus} }x - P_{K_{2}}x }
      = \scal{x - 0}{ \rb{ {\Id} - P_{K_{1}} - P_{K_{2}}}x - 
      \rb{ {\Id} - P_{K_{1}} - P_{K_{2}}}0 }
	\geq 0. 
	\end{equation}
	Thus, because $K_{1}^{\ominus}$
	and $K_{2}$ are closed convex cones, 
	\cite[Lemma~5.6]{zarontello1971projections-partI}
	guarantees that $K_{2} \subseteq K_{1}^{\ominus}$,
	from which and \cref{p:compute-sth}\cref{i:compute2}
	we deduce that 
	\begin{equation}\label{e:form-h}
	h= \iota_{K_{1}^{\ominus} \cap K_{2}^{\ominus} },
	\end{equation}
	as claimed.
	Next, 
	by \cref{t:main} (respectively applied to $P_{K_{1}}$
	and $P_{K_{2}}$), 
	$f$ is Fr\'{e}chet
	differentiable on $\HH$ (hence continuous)
	and 
	\begin{equation}\label{e:cone-f-grad}
        \nabla f = \rb{ {\Id} - P_{K_{1}} } +
        \rb{ {\Id} - P_{ K_{2}}} - {\Id} = {\Id} - P_{K_{1}} - P_{K_{2}},
	\end{equation}
	which is monotone by assumption.
	Therefore, in view of 
	\cite[Proposition~17.7(iii)]{bauschke2017convex}, 
	$f$ is convex, and so $f\in \varGamma_{0}\rb{\HH}$.
	In turn, because  
	$f^{\ast} = h + \q 
	=  \iota_{K_{1}^{\ominus} \cap K_{2}^{\ominus} } + \q$
	by \cref{e:cone-defn-h} and \cref{e:form-h},
	the Fenchel{\textendash}Moreau
	theorem
	and \cite[Example~13.5]{bauschke2017convex}
	yield 
	\begin{math}
	f = f^{\ast\ast } = \rb{ \iota_{K_{1}^{\ominus} \cap K_{2}^{\ominus} } + 
		\q}^{\ast} = \q -\rb{1/2}d_{ K_{1}^{\ominus} \cap K_{2}^{\ominus} }^{2}.
	\end{math}
	Hence, by \cref{e:cone-f-grad}
	and \cite[Corollary~12.31]{bauschke2017convex}, we obtain 
	\begin{math}
        {\Id} - P_{K_{1}} - P_{K_{2}}
	= \nabla f
      = {\Id} - \rb{ {\Id} - P_{K_{1}^{\ominus} \cap K_{2}^{\ominus}}} 
	= P_{K_{1}^{\ominus} \cap K_{2}^{\ominus}}.
	\end{math}
	Thus, 
	the Moreau conical decomposition 
	and \cite[Proposition~6.35 and Corollary~6.34]{bauschke2017convex}
	guarantee that 
	\begin{math}
        P_{K_{1}} + P_{K_{2}} = {\Id} - P_{K_{1}^{\ominus} \cap K_{2}^{\ominus}}
	= P_{ \rb{K_{1}^{\ominus} \cap K_{2}^{\ominus}}^{\ominus} }
	= P_{\closu{ K_{1}^{\ominus\ominus}  + K_{2}^{\ominus\ominus}}}
	= P_{\closu{ K_{1} + K_{2} }}.
	\end{math}
	Consequently, \cref{p:sumCi}\cref{i:set-sum2} 
	asserts that $K_{1} + K_{2}$ is closed,
	and therefore, $P_{K_{1}} + P_{K_{2}}  = P_{K_{1} + K_{2}}$,
	as desired. 
	To summarize, we have shown the
	equivalences of \cref{i:sum-pr-cone0}\textendash\cref{i:sum-pr-cone4}.

	``\cref{i:sum-pr-cone0}\ensuremath{\iff}\cref{i:sum-pr-cone5}'': 
	Follows from \cref{t:sum-pr} and 
	the fact that $\scal{P_{K_{1}}0 }{ P_{K_{2}} 0 } =0$.	
		Moreover, if \cref{i:sum-pr-cone5} holds,
	then \cref{e:dist-cone} follows from \cref{t:sum-pr}
	and \cite[Theorem~6.30(iii)]{bauschke2017convex}.
\end{proof}

Replacing one cone
by a general convex set 
may make 
the implication 
``\cref{i:sum-pr-cone4}\ensuremath{\implies}\cref{i:sum-pr-cone0}''
of \cref{t:two-cones-pr} 
fail, as illustrated by 
the following example.

\begin{example}\label{eg:counter-cone-set}
	Let $K$ be a nonempty closed convex cone 
	in $\HH$, 
	and let $u \in \HH$.
	Then, by
	Moreau's conical decomposition,
      ${\Id} - P_{K} - P_{\left\{ u \right\}} = P_{\pc{K}} - u$,
	which is clearly monotone. However, 
	owing to \cref{eg:uPC},
	$P_{\left\{u\right\}} +  P _{K} = u+ P_{K}$
	is not a projector provided
	that $u \notin \rb{K -K}^{\perp}.$
\end{example}

Here is 
an instance where
the projector
onto the intersection
can be expressed
in term of the 
individual projectors.

\begin{corollary}\label{c:dualized}
	Let $K_{1}$
	and $K_{2}$
	be nonempty closed convex cones
	in $\HH$.
	Then 
	the following 
	are equivalent: 
	\begin{enumerate}
		\item\label{i:dualized1} $P_{K_{1}  \cap K_{2 } } 
              =  P_{K_{1}} + P_{K_{2}}  - {\Id}.$
            \item \label{i:dualized1*} $P_{K_{1}} + P_{K_{2}}  -{\Id}\in \prH$.
          \item\label{i:dualized2} $P_{K_{1}} + P_{K_{2}}  -{\Id} $ is monotone.
		\item\label{i:dualized3} \begin{math}
                \rb{\forall x \in \HH}\,
			\norm{P_{K_{1}}x}^{2} +
			\norm{P_{K_{2}}x}^{2} 
			= \norm{x}^{2} + 
			\scal{P_{K_{1}}x }{ P_{K_{2}}x }.
		\end{math}		
	\end{enumerate}
\end{corollary}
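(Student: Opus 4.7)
The plan is to reduce the corollary to \cref{t:two-cones-pr} applied to the pair of polar cones $\rbr{\pc{K_{1}},\pc{K_{2}}}$, via Moreau's conical decomposition. The key algebraic observation is that, using $\Id = P_{K_{i}} + P_{\pc{K_{i}}}$ for $i \in \left\{ 1,2 \right\}$, I obtain
\begin{equation*}
  P_{K_{1}} + P_{K_{2}} - \Id \;=\; \rbr{\Id - P_{\pc{K_{1}}}} + \rbr{\Id - P_{\pc{K_{2}}}} - \Id \;=\; \Id - P_{\pc{K_{1}}} - P_{\pc{K_{2}}}.
\end{equation*}
Applying Moreau's decomposition to the closed convex cone $K_{1} \cap K_{2}$ together with the polar identity $\pc{\rbr{K_{1} \cap K_{2}}} = \closu{\pc{K_{1}} + \pc{K_{2}}}$ (see, e.g., \cite[Proposition~6.35]{bauschke2017convex}) yields $P_{K_{1} \cap K_{2}} = \Id - P_{\closu{\pc{K_{1}} + \pc{K_{2}}}}$.

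Next, I translate each condition of the corollary into an equivalent condition of \cref{t:two-cones-pr} for the pair $\rbr{\pc{K_{1}}, \pc{K_{2}}}$. The identity above shows that \cref{i:dualized2} (monotonicity of $P_{K_{1}} + P_{K_{2}} - \Id$) is the same as monotonicity of $\Id - P_{\pc{K_{1}}} - P_{\pc{K_{2}}}$, i.e., \cref{t:two-cones-pr}\cref{i:sum-pr-cone4} for $\rbr{\pc{K_{1}}, \pc{K_{2}}}$. For \cref{i:dualized3}, I apply \cref{l:cone-identities}\cref{i:cone-identities2} to $\rbr{K_{1}, K_{2}}$ to see that \cref{i:dualized3} holds if and only if $\scal{P_{\pc{K_{1}}}x}{P_{\pc{K_{2}}}x} = 0$ for every $x \in \HH$, which is \cref{t:two-cones-pr}\cref{i:sum-pr-cone5} for $\rbr{\pc{K_{1}}, \pc{K_{2}}}$.

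By \cref{t:two-cones-pr}, both of these are equivalent to $\pc{K_{1}} + \pc{K_{2}}$ being closed with $P_{\pc{K_{1}}} + P_{\pc{K_{2}}} = P_{\pc{K_{1}} + \pc{K_{2}}}$. Since $\pc{K_{1}} + \pc{K_{2}} = \pc{\rbr{K_{1} \cap K_{2}}}$ in this case and $P_{\pc{\rbr{K_{1} \cap K_{2}}}} = \Id - P_{K_{1} \cap K_{2}}$, the latter rearranges, after substituting $P_{\pc{K_{i}}} = \Id - P_{K_{i}}$, to exactly $P_{K_{1} \cap K_{2}} = P_{K_{1}} + P_{K_{2}} - \Id$, which is \cref{i:dualized1}. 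Finally, \cref{i:dualized1} trivially implies \cref{i:dualized1*}, and \cref{i:dualized1*} implies \cref{i:dualized2} because every projector is monotone by \cref{eg:PC-mono}\cref{i:Pc-maxmono}, closing the chain.

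The proof is essentially a bookkeeping exercise: the substantive content is packaged in \cref{t:two-cones-pr} and \cref{l:cone-identities}, and no serious obstacle is expected beyond the careful algebraic verification of the dualized identities.
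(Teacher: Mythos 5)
Your proposal is correct and follows essentially the same route as the paper: rewrite $P_{K_{1}}+P_{K_{2}}-\Id$ as $\Id-P_{\pc{K_{1}}}-P_{\pc{K_{2}}}$ via Moreau's conical decomposition, identify $P_{K_{1}\cap K_{2}}$ with $\Id-P_{\closu{\pc{K_{1}}+\pc{K_{2}}}}$ using \cite[Proposition~6.35]{bauschke2017convex}, and then invoke the equivalences of \cref{t:two-cones-pr} for the polar pair together with \cref{l:cone-identities}\cref{i:cone-identities2} for condition \cref{i:dualized3}. The only (harmless) deviation is that you absorb \cref{i:dualized1*} into the cycle via the monotonicity of projectors, whereas the paper proves \cref{i:dualized1}$\iff$\cref{i:dualized1*} directly using the bijectivity of polarity and the positive homogeneity of $P_{K_{1}}+P_{K_{2}}-\Id$; your shortcut is a slight simplification of the same argument.
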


\begin{proof}
	We first deduce from  
	the Moreau conical decomposition
	and 
	\cite[Proposition~6.35]{bauschke2017convex}
	that 
	\begin{subequations}
		\begin{align}
		P_{K_{1}  \cap K_{2 } } =  P_{K_{1}} + P_{K_{2}} 
            -{\Id}
            & \iff {\Id} - P_{\rb{K_{1} \cap K_{2}}^{\ominus} } 
            = P_{K_{1}} + P_{K_{2}} - {\Id} \\
            & \iff {\Id} - P_{ \closu{ K_{1}^{\ominus} + K_{2}^{\ominus} } }
            = P_{K_{1}} + P_{K_{2}}  - {\Id} \\
		& \iff P_{ \closu{ K_{1}^{\ominus} + K_{2}^{\ominus} } } = 
            \rb{ {\Id} - P_{K_{1}}} + \rb{ {\Id} - P_{K_{2}}} \\
		& \iff  P_{ \closu{ K_{1}^{\ominus} + K_{2}^{\ominus} } } 
		= P_{K_{1}^{\ominus} } + P_{K_{2}^{\ominus} }. \label{e:intersect-equivalence} 
		\end{align}
	\end{subequations}
	
	``\cref{i:dualized1}\ensuremath{\iff}\cref{i:dualized1*}'': 
	Denote by $\mathscr{C}$
	the class of nonempty closed convex 
	cones in $\HH$. 
	Then,
	because the
	mapping $\mathscr{C} \to \mathscr{C} : K \mapsto \pc{K}$
	is bijective due to 
	\cite[Corollary~6.34]{bauschke2017convex}, 
	we  
	derive
	from \cref{e:intersect-equivalence}, the equivalence ``\cref{i:sum-pr-cone0}\ensuremath{\iff}\cref{i:sum-pr-cone1}''
	of \cref{t:two-cones-pr},
	and the Moreau conical decomposition that 
	\begin{subequations}
		\begin{align}
		\cref{i:dualized1} 
			& \iff P_{ \closu{ K_{1}^{\ominus} + K_{2}^{\ominus} } } 
			= P_{K_{1}^{\ominus} } + P_{K_{2}^{\ominus} } \\
			& \iff \text{$K_{1}^{\ominus} + K_{2}^{\ominus}$~is closed and~}
			P_{  K_{1}^{\ominus} + K_{2}^{\ominus} } 
			= P_{K_{1}^{\ominus} } + P_{K_{2}^{\ominus} } \\
                  & \iff \rb{\exists K \in \mathscr{C}}\, P_{K} 
			 = P_{K_{1}^{\ominus} } + P_{K_{2}^{\ominus} } \\
                   & \iff \rb{\exists K \in \mathscr{C}}\, 
                  {\Id} - P_{\pc{K}} =  
                  \rb{ {\Id} - P_{K_{1}}} + \rb{ {\Id} - P_{K_{2}}} \\
                  & \iff \rb{\exists K \in \mathscr{C} }\,
                  P_{\pc{K} } = P_{K_{1}} + P_{K_{2}} - {\Id} \\
                  & \iff \rb{\exists K \in \mathscr{C} }\,
			 P_{ K } = P_{K_{1}} + P_{K_{2}} - \Id \\
			 & \iff \cref{i:dualized1*},
		\end{align}
	\end{subequations}
	where the last equivalence follows from
	the fact that $P_{K_{1}} + P_{K_{2}} - \Id$
	is positively homogeneous.
		
	``\cref{i:dualized1}\ensuremath{\iff}\cref{i:dualized2}'': 
	Since $ \Id - \rb{P_{K_{1}^{\ominus} } + P_{K_{2}^{\ominus} }} = 
		P_{K_{1}} + P_{K_{2}} - \Id$
		by Moreau's decomposition, 
		this equivalence 
		is
	a consequence of \cref{e:intersect-equivalence} 
	and
	 the equivalence 
	 ``\cref{i:sum-pr-cone1}\ensuremath{\iff}\cref{i:sum-pr-cone4}''
	of \cref{t:two-cones-pr} 
	(applied to $\rb{K_{1}^{\ominus}, K_{2}^{\ominus} }$).

	``\cref{i:dualized1}\ensuremath{\iff}\cref{i:dualized3}'': 
	This readily follows from 
	\cref{e:intersect-equivalence},  
	the equivalence 
	``\cref{i:sum-pr-cone1}\ensuremath{\iff}\cref{i:sum-pr-cone5}''
	of \cref{t:two-cones-pr} (applied to 
	$\rb{K_{1}^{\ominus},K_{2}^{\ominus} }$), 
	and 
	\cref{l:cone-identities}\cref{i:cone-identities2}.
\end{proof}

By replacing $\rb{K_{1}, K_{2}}$ by 
$\rb{K_{1}, K_{2}^{\ominus}}$ in \cref{c:dualized},
we provide an alternative proof
for \cite[Theorem~5.3]{zarontello1971projections-partI}.
The linear case
of \cref{c:Zaran-difference}
goes back  
at least to
Halmos 
(see \cite[Theorem~3, p.~48]{Halmos-spectral-1951}).

\begin{corollary}[Zarantonello]
	\label{c:Zaran-difference}
	Let $K_{1}$
	and $K_{2}$
	be nonempty closed convex cones 
	in $\HH$. 
	Then $P_{K_{2}} P_{K_{1}} = P_{K_{2}}$
	if and only if $P_{K_{1}} - P_{ K_{2} }$
	is a projector onto a closed
	convex set;
	in which case,  
	$P_{K_{1}} - P_{K_{2}} = P_{K_{1} \cap K_{2}^{\ominus} } .$
\end{corollary}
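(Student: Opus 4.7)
The plan is to specialize \cref{c:dualized} to the pair of nonempty closed convex cones $\rbr{K_{1},K_{2}^{\ominus}}$ and then translate the resulting equivalences back into statements about $\rbr{K_{1},K_{2}}$ via Moreau's conical decomposition $P_{K_{2}^{\ominus}} = \Id - P_{K_{2}}$. With this substitution, item \cref{i:dualized1} of \cref{c:dualized} reads $P_{K_{1}\cap K_{2}^{\ominus}} = P_{K_{1}} - P_{K_{2}}$, while item \cref{i:dualized1*} reads $P_{K_{1}} - P_{K_{2}} \in \prH$; these already yield the ``in which case'' identity and the equivalence between $P_{K_{1}} - P_{K_{2}}$ being a projector and the range identification. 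It therefore remains to link the composition identity $P_{K_{2}} P_{K_{1}} = P_{K_{2}}$ with the membership $P_{K_{1}} - P_{K_{2}} \in \prH$.

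To forge that link, I would rewrite item \cref{i:dualized3} of \cref{c:dualized} for the pair $\rbr{K_{1},K_{2}^{\ominus}}$. Substituting $\norm{P_{K_{2}^{\ominus}}x}^{2} = \norm{x}^{2} - \norm{P_{K_{2}}x}^{2}$ (which follows from Moreau's decomposition together with \cref{l:cone-identities}\cref{i:cone-identities1}) and $\scal{P_{K_{1}}x}{P_{K_{2}^{\ominus}}x} = \norm{P_{K_{1}}x}^{2} - \scal{P_{K_{1}}x}{P_{K_{2}}x}$ into the scalar identity and cancelling, one obtains
\begin{equation}
\label{eq:plan-star}
\rbr{\forall x \in \HH}\quad \norm{P_{K_{2}}x}^{2} = \scal{P_{K_{1}}x}{P_{K_{2}}x}.
\end{equation}
Hence \cref{c:dualized} now says that the range identification $P_{K_{1}} - P_{K_{2}} = P_{K_{1}\cap K_{2}^{\ominus}}$, the membership $P_{K_{1}} - P_{K_{2}} \in \prH$, and \cref{eq:plan-star} are all equivalent.

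Finally, I would show that \cref{eq:plan-star} is equivalent to $P_{K_{2}}P_{K_{1}} = P_{K_{2}}$, which closes the loop. The forward direction is obtained by applying \cref{l:cone-identities}\cref{i:cone-identities1} to the cone $K_{2}$ at the point $P_{K_{1}}x$, yielding $\norm{P_{K_{2}}\rbr{P_{K_{1}}x}}^{2} = \scal{P_{K_{1}}x}{P_{K_{2}}\rbr{P_{K_{1}}x}}$, and then substituting $P_{K_{2}}\rbr{P_{K_{1}}x} = P_{K_{2}}x$. For the converse, I would use the already established equivalence of \cref{eq:plan-star} with $P_{K_{1}} - P_{K_{2}} = P_{K_{1}\cap K_{2}^{\ominus}}$ to conclude $P_{K_{1}}x - P_{K_{2}}x \in K_{1}\cap K_{2}^{\ominus} \subseteq K_{2}^{\ominus}$, while \cref{eq:plan-star} itself rearranges to $\scal{P_{K_{2}}x}{P_{K_{1}}x - P_{K_{2}}x} = 0$; the decomposition $P_{K_{1}}x = P_{K_{2}}x + \rbr{P_{K_{1}}x - P_{K_{2}}x}$ is therefore the Moreau conical decomposition of $P_{K_{1}}x$ relative to $K_{2}$, and uniqueness forces $P_{K_{2}}\rbr{P_{K_{1}}x} = P_{K_{2}}x$. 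The only delicate step is this last uniqueness argument, which critically requires both the range information supplied by item \cref{i:dualized1} of \cref{c:dualized} and the orthogonality encoded in \cref{eq:plan-star} to be available simultaneously.
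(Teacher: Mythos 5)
Your proposal is correct and follows essentially the same route as the paper: both arguments specialize \cref{c:dualized} to the pair $\rbr{K_{1},K_{2}^{\ominus}}$, use Moreau's conical decomposition and \cref{l:cone-identities}\cref{i:cone-identities1} to translate the resulting items, and settle the converse by recognizing $P_{K_{1}}x = P_{K_{2}}x + \rbr{P_{K_{1}}x - P_{K_{2}}x}$ as the conical decomposition of $P_{K_{1}}x$ relative to $K_{2}$. The only (harmless) difference is that you extract the orthogonality $\scal{P_{K_{2}}x}{P_{K_{1}}x - P_{K_{2}}x} = 0$ directly from your rearranged identity $\norm{P_{K_{2}}x}^{2} = \scal{P_{K_{1}}x}{P_{K_{2}}x}$, whereas the paper obtains it by a further appeal to \cref{t:two-cones-pr}.
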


\begin{proof}
	First, suppose that $P_{K_{2}} P_{K_{1}} = P_{K_{2}}. $ 
	Then, 
	by \cite[Theorem~6.30(i)\&(iii)]{bauschke2017convex}
	and \cref{l:cone-identities}\cref{i:cone-identities1}, 
		\begin{subequations}
			\begin{align}
				\rb{\forall x \in \HH}
				\quad 
				\scal{P_{K_{1}}x }{ P_{K_{2}^{\ominus} } x  }
				 = \scal{ P_{K_{1}}x }{x} - \scal{ P_{K_{1}}x }{ P_{K_{2}}x } 
				& = \scal{ P_{K_{1}}x }{x} - 
					\scal{ P_{K_{1}}x }{ P_{K_{2}} P_{K_{1}} x } \\
				& = \norm{P_{K_{1}} x}^{2} - \norm{ P_{K_{2}} P_{K_{1}} x}^{2} \\
				& = \norm{P_{K_{1}} x}^{2} - \norm{ P_{K_{2}} x}^{2}  \\
				& = \norm{ P_{K_{1}}x }^{2} + \norm{P_{K_{2}^{\ominus} }x }^{2} 
					- \norm{x}^{2}.
 			\end{align}
		\end{subequations}
	Hence, 
	the equivalence ``\cref{i:dualized1}\ensuremath{\iff}\cref{i:dualized3}'' 
	of 
	\cref{c:dualized}
	(applied to $\rb{K_{1} , K_{2}^{\ominus} }$)
	yields $P_{K_{1}} - P_{K_{2}} 
	= P_{K_{1}} + P_{K_{2}^{\ominus} } - \Id 
	= P_{K_{1} \cap K_{2}^{\ominus} } $,
	as desired.
	Conversely, assume that $P_{K_{1}} - P_{K_{2}}$
	is a projector associated with a 
	closed convex set. 
	Since $P_{K_{1}} - P_{K_{2}} = P_{K_{1}} + P_{K_{2}^{\ominus}} - \Id $,
	it follows from 
	the equivalence ``\cref{i:dualized1}\ensuremath{\iff}\cref{i:dualized1*}''
	of 
	\cref{c:dualized} (applied to $\rb{K_{1}, K_{2}^{\ominus} }$)
	that  
		\begin{equation}\label{e:Zaran-PcPd}
			P_{K_{1}} - P_{K_{2}}  = P_{K_{1} \cap K_{2}^{\ominus}  }.
		\end{equation}
	Now take $x \in \HH$. On the one hand,
	because $P_{K_{1} \cap K_{2}^{\ominus} } + P_{K_{2}} 
	=  \rb{P_{K_{1}}  - P_{K_{2}} } + P_{K_{2}} = P_{K_{1}}$
	by \cref{e:Zaran-PcPd},
	we infer from \cref{t:two-cones-pr} 
	that 
		\begin{math}
			P_{K_{1} \cap K_{2}^{\ominus} }x
			\perp P_{K_{2}}x 
 		\end{math}
 	or, equivalently, by \cref{e:Zaran-PcPd}, 
 		\begin{math}
 		 \rb{ P_{K_{1}}x - P_{K_{2}}x }
 		\perp P_{K_{2}}x .
 		\end{math}
 	On the other hand, \cref{e:Zaran-PcPd} implies
 	that $ P_{K_{1}}x - P_{K_{2}}x \in K_{2}^{\ominus} $.
 	Altogether, 
 	since clearly $P_{K_{2}}x \in K_{2}$,
 	\cite[Proposition~6.28]{bauschke2017convex}
 	asserts that $P_{K_{2}} P_{K_{1}}x = P_{K_{2}}x $,
 	and the proof is complete.
\end{proof}

The so-called partial
sum property, i.e., 
if a 
finite sum 
of proximity operators
is a proximity operator, 
then so is
every partial sum, 
was obtained in \cite{bartz2017resolvent}.
Somewhat surprisingly, 
as we shall see in 
the following result,
this property is still
valid
in the class
of projectors onto 
convex cones. 
The equivalence ``\cref{i:cone-many1}\ensuremath{\iff}\cref{i:cone-many2}''
of the following result
was obtained by Zarantonello
with a different proof
(see \cite[Theorem~5.5]{zarontello1971projections-partI}).

\begin{theorem}[Partial sum property for cones]\label{c:cone-many}
      Let
	$\fa{K_{i}}{ i \in I }$
	be a family of nonempty 
	closed convex cones in $\HH$.
	Then the following
	are equivalent: 
	\begin{enumerate}
		\item \label{i:cone-many1}For every $\rb{i,j} \in I\times I$
		such that $i \neq j$, we have 
		\begin{math}
              \rb{\forall x \in \HH}\,
		\scal{P_{K_{i}  } x}{ P_{K_{j}} x} =0.
		\end{math}
		\item \label{i:cone-many1prime}
		$\sum_{ i \in I}K_{i}$ is closed
		and $\sum_{ i \in I} P_{K_{i}} = P_{\sum_{ i \in I} K_{i} } $.
		\item\label{i:cone-many2} $\sum_{ i \in I} P_{K_{i}}$
		is a projection onto
		a closed convex cone in $\HH$.
		\item\label{i:cone-many3} $\sum_{ i \in I} P_{K_{i}}$
		is a proximity operator of
		a function in $\varGamma_{0}\rb{\HH}$.
		\item \label{i:cone-many4}For every nonempty subset $J$
		of $I$, $\sum_{j \in J} P_{ K_{j}  }$
		is a proximity operator of
		a function in $\varGamma_{0}\rb{\HH}$.
		\item \label{i:cone-many4a} 
		For every nonempty subset $J$
		of $I$, $\sum_{ j \in J}K_{j}$ is closed
		and $\sum_{ j \in J} P_{K_{j}} = P_{\sum_{ j \in J} K_{j} } $.
		\item\label{i:cone-many5} For every $\rb{i,j} \in I\times I$
		such that $i \neq j$,
		we have   $P_{ K_{i} } + P_{ K_{j} }$
		is nonexpansive.
		\item\label{i:cone-many6} For every $\rb{i,j} \in I\times I$
		such that $i \neq j$,
		we have 
		\begin{math}
		\Id - P_{K_{i}} - P_{K_{j}}
		\end{math}
		is monotone.
	\end{enumerate}
\end{theorem}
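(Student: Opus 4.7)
The plan is to establish the cyclic chain
$\cref{i:cone-many1}\ensuremath{\implies}\cref{i:cone-many1prime}\ensuremath{\implies}\cref{i:cone-many2}\ensuremath{\implies}\cref{i:cone-many3}\ensuremath{\implies}\cref{i:cone-many4}\ensuremath{\implies}\cref{i:cone-many4a}\ensuremath{\implies}\cref{i:cone-many1}$,
driven by the sum-of-projectors criterion \cref{t:sum-many}, the two-cone characterization \cref{t:two-cones-pr}, and the partial sum property of \cite[Theorem~4.2]{bartz2017resolvent}. Items \cref{i:cone-many5} and \cref{i:cone-many6} are pairwise conditions that will be folded into the chain via the equivalences already supplied by \cref{t:two-cones-pr}.

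For $\cref{i:cone-many1}\ensuremath{\implies}\cref{i:cone-many1prime}$, since each $K_{i}$ is a cone we have $P_{K_{i}}0=0$, so the pairwise orthogonality in \cref{i:cone-many1} is exactly hypothesis \cref{eq:cond-sum} of \cref{t:sum-many} with constant $\gamma=0$; this yields closedness of $\sum_{i\in I}K_{i}$ and the identity $\sum_{i\in I}P_{K_{i}}=P_{\sum_{i\in I}K_{i}}$. The implications $\cref{i:cone-many1prime}\ensuremath{\implies}\cref{i:cone-many2}\ensuremath{\implies}\cref{i:cone-many3}$ are immediate because a Minkowski sum of convex cones is a convex cone, and every projector onto a nonempty closed convex set is the proximity operator of the corresponding indicator in $\varGamma_{0}\rbr{\HH}$. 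The step $\cref{i:cone-many3}\ensuremath{\implies}\cref{i:cone-many4}$ is precisely the content of \cite[Theorem~4.2]{bartz2017resolvent}: if a finite sum of proximity operators is itself a proximity operator, then so is every partial sum.

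For $\cref{i:cone-many4}\ensuremath{\implies}\cref{i:cone-many4a}$, fix any nonempty $J\subset I$ and any two distinct indices $i,j\in J$. Applying \cref{i:cone-many4} to the two-element subset $\{i,j\}\subset I$, the sum $P_{K_{i}}+P_{K_{j}}$ is a proximity operator, so the equivalence $\cref{i:sum-pr-cone2}\ensuremath{\iff}\cref{i:sum-pr-cone5}$ of \cref{t:two-cones-pr} forces $\scal{P_{K_{i}}x}{P_{K_{j}}x}=0$ for every $x\in\HH$. This pairwise orthogonality throughout $J$ is exactly the hypothesis of \cref{t:sum-many} applied to $\fa{K_{j}}{j\in J}$ with $\gamma=0$, so $\sum_{j\in J}K_{j}$ is closed and $\sum_{j\in J}P_{K_{j}}=P_{\sum_{j\in J}K_{j}}$. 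Closing the loop, $\cref{i:cone-many4a}\ensuremath{\implies}\cref{i:cone-many1}$ is obtained by specializing $J=\{i,j\}$ and invoking the implication $\cref{i:sum-pr-cone1}\ensuremath{\implies}\cref{i:sum-pr-cone5}$ of \cref{t:two-cones-pr}.

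Finally, $\cref{i:cone-many1}\ensuremath{\iff}\cref{i:cone-many5}$ and $\cref{i:cone-many1}\ensuremath{\iff}\cref{i:cone-many6}$ follow directly by applying the equivalences $\cref{i:sum-pr-cone3}\ensuremath{\iff}\cref{i:sum-pr-cone5}$ and $\cref{i:sum-pr-cone4}\ensuremath{\iff}\cref{i:sum-pr-cone5}$ of \cref{t:two-cones-pr} to each pair $\rbr{K_{i},K_{j}}$ with $i\neq j$. The single delicate ingredient is $\cref{i:cone-many3}\ensuremath{\implies}\cref{i:cone-many4}$: without the partial sum property of \cite{bartz2017resolvent}, one cannot pass from a single global proximity-operator identity to the pairwise information that \cref{t:two-cones-pr} requires, since positive homogeneity of $\sum_{i\in I}P_{K_{i}}$ alone is too weak to recover pairwise orthogonality directly.
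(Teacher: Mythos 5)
Your argument is correct and takes essentially the same route as the paper: the same ingredients drive it, namely the sum criterion \cref{t:sum-many} with $\gamma=0$ (the paper instead invokes \cref{c:induct-many}, which is equivalent material), the two-cone characterization \cref{t:two-cones-pr}, and the partial sum property \cite[Theorem~4.2]{bartz2017resolvent}, with only a cosmetic rearrangement of which implications form the cycle and of how \cref{i:cone-many4a}, \cref{i:cone-many5}, \cref{i:cone-many6} are attached. The one nuance is in \cref{i:cone-many3}$\implies$\cref{i:cone-many4}: before invoking \cite[Theorem~4.2]{bartz2017resolvent}, the paper first uses Moreau's decomposition to write $\sum_{i \in I}P_{K_{i}} + \prox_{f^{\ast}} = \Id$, where $f \in \varGamma_{0}\rbr{\HH}$ satisfies $\sum_{i \in I}P_{K_{i}} = \prox_{f}$ --- a one-line reduction you should include if the cited theorem is stated for families of proximity operators summing to the identity rather than to an arbitrary proximity operator.
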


\begin{proof}
	``\cref{i:cone-many1}\ensuremath{\implies}\cref{i:cone-many1prime}'':
	A direct consequence of \cref{c:induct-many}.
	
	``\cref{i:cone-many1prime}\ensuremath{\implies}\cref{i:cone-many2}'' and 
	``\cref{i:cone-many2}\ensuremath{\implies}\cref{i:cone-many3}'': 
	Clear.
	
	``\cref{i:cone-many3}\ensuremath{\implies}\cref{i:cone-many4}'':
	Let $f \in \varGamma_{0}\rb{\HH}$
	be such that $\sum_{ i \in I} P_{ K_{i} } = \prox_{f}$.
	Then, by Moreau's decomposition
      (\cite{Moreau-prox-dual-1965}),
	$\sum_{i \in I}P_{K_{i}} + \prox_{f^{\ast}} = \prox_{f} + \prox_{f^{\ast}} = \Id$.
	Therefore, since $\set{P_{ K_{i} }}{ i \in I}$
	are proximity operators, the 
	conclusion follows from \cite[Theorem~4.2]{bartz2017resolvent}.
	
	``\cref{i:cone-many4}\ensuremath{\implies}\cref{i:cone-many5}'': Clear.
	
	``\cref{i:cone-many5}\ensuremath{\implies}\cref{i:cone-many6}'': 
	See \cite[Example~20.7]{bauschke2017convex}.
	
	``\cref{i:cone-many6}\ensuremath{\implies}\cref{i:cone-many1}'': 
	This is the implication 
	``\cref{i:sum-pr-cone4}\ensuremath{\implies}\cref{i:sum-pr-cone5}''
	of
	\cref{t:two-cones-pr}. 
	
	To sum up, we have shown the equivalence
	of \cref{i:cone-many1}{\textendash}\cref{i:cone-many6}
	except for \cref{i:cone-many4a}.
	
	``\cref{i:cone-many4}\ensuremath{\iff}\cref{i:cone-many4a}'': 
	Follows from the equivalence 
	``\cref{i:cone-many1prime}\ensuremath{\iff}\cref{i:cone-many3}.''
\end{proof}

As we now illustrate,
the partial sum property may, however,
fail
outside the class 
of projectors onto convex cones.

\begin{example}\label{eg:two-rays2}
	Suppose that $\HH \neq \{0\}$, 
	let $w \in \HH\smallsetminus\{0\}$, 
	set $U \coloneqq \RP w$,
	and set $V \coloneqq  \RP\rb{-w} =\RM w .$
	Then, appealing to 
	\cite[Example~29.31]{bauschke2017convex},
      we see that $\rb{\forall x \in \HH}\, 
	\scal{P_{U}x }{ P_{V} x } = 0$.
	Hence, by \cref{t:two-cones-pr},
	$P_{U} + P_{V} = P_{U+V} = P_{\RR w}.$
	Now suppose that $z \in \HH \smallsetminus \rb{U - U}^{\perp} 
	=\HH \smallsetminus \rb{\RR w}^{\perp}$.
	Then clearly 
	$P_{U} + P_{V} + P_{\left\{z\right\}}
	+ P_{\left\{-z\right\}} = P_{\RR w}$
	is the projector
	associated with 
	the line $\RR w$.
	However, due to \cref{eg:uPC},
	$P_{\left\{z\right\}} + P_{U} = z+ P_{U}$
	is not a projector.
\end{example}

To proceed further,
we  require the following lemma.

\begin{lemma}
      \label{l:2cones-rays}
      Let $u$ and $v$ be in $\HH \setminus \left\{ 0 \right\}$,
      and set $U \coloneqq \RP u $ and $V \coloneqq
      \RP v$.
      Then 
      \begin{equation}
            \label{eq:2rays}
            \left[ \;  \rbr{\forall x \in \HH}\,
            \scal[\big]{P_{U}x}{P_{V}x} =0 \;\right]
      \quad \iff \quad 
      \left[ \; u \in \RMM v \text{ or } u\perp v \; \right].
      \end{equation}
\end{lemma}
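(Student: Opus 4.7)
The plan is to reduce the question to an explicit pointwise computation using the closed form of the projector onto a ray, and then to carry out a short case analysis.

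For any $w \in \HH \smallsetminus \{0\}$, the projector onto the ray $\RP w$ is
\begin{equation*}
	P_{\RP w} \colon x \mapsto
	\frac{\max\{\scal{x}{w}, 0\}}{\norm{w}^{2}}\, w,
\end{equation*}
which follows by a direct minimization of $t \mapsto \norm{tw - x}^{2}$ over $t \in \RP$. Applying this to $u$ and $v$, I obtain for every $x \in \HH$ the identity
\begin{equation*}
	\scal{P_{U}x}{P_{V}x}
	= \frac{\max\{\scal{x}{u},0\}\,\max\{\scal{x}{v},0\}}{\norm{u}^{2}\norm{v}^{2}}
	\scal{u}{v}.
\end{equation*}

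Hence the left-hand side of \cref{eq:2rays} is equivalent to saying that for every $x \in \HH$, either $\scal{u}{v} = 0$, or $\max\{\scal{x}{u},0\}\max\{\scal{x}{v},0\} = 0$. I would split into the alternative: if $u \perp v$ we are done, and otherwise the condition becomes
\begin{equation*}
	\rbr{\forall x \in \HH}\quad \scal{x}{u} \leq 0 \text{ or } \scal{x}{v} \leq 0,
\end{equation*}
i.e.\ the open half-spaces $H_{u} \coloneqq \menge{x \in \HH}{\scal{x}{u} > 0}$ and $H_{v} \coloneqq \menge{x \in \HH}{\scal{x}{v} > 0}$ are disjoint.

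To finish, I would show $H_{u} \cap H_{v} = \varnothing$ forces $u \in \RMM v$, working in the subspace $\lspan\{u,v\}$ (one may assume $\HH$ is this two-dimensional space since extending by zero on the orthogonal complement cannot change whether a common point exists). If $u$ and $v$ are linearly independent, the linear map $x \mapsto (\scal{x}{u}, \scal{x}{v})$ is surjective onto $\RR^{2}$, so both entries can be made positive simultaneously, contradicting disjointness. Therefore $v = \lambda u$ for some $\lambda \in \RR \smallsetminus \{0\}$; taking $x = u$ shows $\lambda > 0$ produces a common point, so $\lambda < 0$, i.e.\ $u \in \RMM v$. Conversely, both $u \perp v$ and $u \in \RMM v$ make the explicit formula vanish for every $x$ (in the second case because $\scal{x}{u}$ and $\scal{x}{v}$ have opposite signs), completing the equivalence. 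No step presents a serious obstacle; the only care needed is the reduction to $\lspan\{u,v\}$ in the ``forward'' direction so that linear independence really does produce a common point in $H_{u} \cap H_{v}$.
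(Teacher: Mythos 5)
Your proof is correct, and its forward direction takes a genuinely different route from the paper's. Both arguments start from the same explicit formula $P_{\RP w}x = \norm{w}^{-2}\max\{\scal{x}{w},0\}\,w$ (this is \cite[Example~29.31]{bauschke2017convex} in the paper), and the converse direction is essentially identical in both. For the forward implication, however, the paper tests the hypothesis at the single, cleverly chosen vector $w \coloneqq \norm{v}u + \norm{u}v$: Cauchy--Schwarz shows $\scal{w}{u}\geq 0$ and $\scal{w}{v}\geq 0$, the vanishing of $\scal{P_{U}w}{P_{V}w}$ factors into three cases, and the equality case of Cauchy--Schwarz then forces $u \in \RMM v$ in the nonorthogonal cases. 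You instead use the hypothesis at \emph{every} $x$, recast it (when $\scal{u}{v}\neq 0$) as disjointness of the open half-spaces $\menge{x}{\scal{x}{u}>0}$ and $\menge{x}{\scal{x}{v}>0}$, and rule out linear independence of $u,v$ by surjectivity of $x \mapsto \rbr{\scal{x}{u},\scal{x}{v}}$ on $\lspan\{u,v\}$ (invertibility of the Gram matrix), leaving only $v = \lambda u$ with $\lambda<0$. The paper's argument is shorter and avoids any reduction to a two-dimensional subspace; yours is more geometric and makes the underlying dichotomy (orthogonal rays versus opposite rays) structurally transparent. One cosmetic point: in the converse for $u \in \RMM v$, the inner products $\scal{x}{u}$ and $\scal{x}{v}$ need not have strictly opposite signs (both may vanish), but since they are negative multiples of one another, $\max\{\scal{x}{u},0\}\max\{\scal{x}{v},0\}=0$ in all cases, so your conclusion stands.
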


\begin{proof}
      Suppose first that
      \begin{equation}
            \label{eq:suppose-first}
      \rbr{\forall x \in \HH} \, 
      \scal{P_{U}x}{P_{V}x} = 0,
      \end{equation}
      and set $w \coloneqq \norm{v}u + \norm{u}v$.
      Then, by the Cauchy--Schwarz 
      inequality,
      $\scal{w}{u} = \norm{v}\norm{u}^{2} + 
      \norm{u} \scal{u}{v}
      \geq \norm{v}\norm{u}^{2} - 
      \norm{u}\rbr{\norm{u}\norm{v}} = 0$
      and 
      $\scal{w}{v} = \norm{v}\scal{u}{v} + \norm{u}\norm{v}^{2} 
      \geq - \norm{v} \rbr{\norm{u} \norm{v} }
      + \norm{u} \norm{v}^{2} = 0$.
      Hence, due to \cite[Example~29.31]{bauschke2017convex},
      we obtain
      \begin{equation}
            \label{eq:projection-cones}
            P_{U}w = \frac{\scal{w}{u}}{\norm{u}^{2}}u
            = \frac{ \norm{v}\norm{u}^{2} + \norm{u}\scal{u}{v} }{\norm{u}^{2}}u
            \quad
            \text{and}
            \quad 
            P_{V}w = \frac{\scal{w}{v}}{\norm{v}^{2}}v
            = \frac{\norm{v}\scal{u}{v} + \norm{u}\norm{v}^{2}}{\norm{v}^{2}}v.
      \end{equation}
      In turn, it follows from \cref{eq:suppose-first}
      that 
      \begin{equation}
            0 = \scal[\big]{P_{U}w}{P_{V}w}
            = \frac{1}{\norm{u}^{2}\norm{v}^{2}}
            \rbr[\big]{\norm{v}\norm{u}^{2} + \norm{u}\scal{u}{v}}
            \rbr[\big]{\norm{v}\scal{u}{v}+\norm{u}\norm{v}^{2}}\scal{u}{v},
      \end{equation}
     from which we derive
     the 
     following conceivable cases.
      
     (a) $\scal{u}{v} = 0$: Then $u \perp v$.

     (b) $\norm{v}\norm{u}^{2} + \norm{u}\scal{u}{v} = 0 $:
     Then, since $\norm{u} > 0$,
     we get $\norm{v}\norm{u} + \scal{u}{v} = 0$ or, equivalently, 
     $\scal{u}{-v} = \norm{v}\norm{u} = \norm{-v}\norm{u}$.
     Consequently, the Cauchy--Schwarz inequality
     yields $u \in \RPP \rbr{-v} = \RMM v$.

     (c) $\norm{v}\scal{u}{v} + \norm{u}\norm{v}^{2} = 0$:
     Proceeding as in the case (b), we obtain
     $u \in \RMM v$. 

     Conversely, assume that 
     $u \in \RMM v$ or that $u \perp v$.
     Let $x \in \HH$.
     If $u \in \RMM v$,
     then $V = \RP v = \RP \rbr{-u}$,
     and since $U = \RP u$,
      it follows from \cite[Example~29.31]{bauschke2017convex}
      that $0 \in \left\{ P_{U}x, P_{V}x \right\}$ and so
      $ \scal{P_{U}x}{P_{V}x} = 0$.
     Otherwise, we have $\scal{u}{v}=0$ and,
     invoking \cite[Example~29.31]{bauschke2017convex} once more,
     \begin{equation}
           \scal{P_{U}x}{P_{V}x} = 
           \frac{1}{\norm{u}^{2}\norm{v}^{2}}\max\left\{\scal{x}{u},0\right\}
           \max\left\{
           \scal{x}{v},0\right\}\scal{u}{v} = 0,
     \end{equation}
     as required.
\end{proof}

\cref{c:cone-many}
allows us to characterize
finitely generated cones
of which the associated
projectors are the sum
of projectors onto
the generating rays.

\begin{proposition}
      \label{p:finite-cones}
      Let $\fa{u_{i}}{i \in  I}$
      be a finite family in $\HH\setminus\left\{ 0 \right\}$,
       set 
       $\rbr{\forall i \in I} \, K_{i} \coloneqq \RP u_{i}$, 
       and set 
       $K \coloneqq \sum_{i \in I} K_{i}$.
      Then
      $P_{K} = \sum_{i \in I} P_{K_{i}}$
      if and only if,
       for every $\rbr{i,j} \in I \times I$
      with $i \neq j$,
      we have $u_{i} \perp u_{j}$
      or $u_{j} \in \RMM u_{i}$;
      in which case,
      for every $i \in I$,
      $\card \menge{j \in I}{u_{j} \in \RMM u_{i}} \leq 1$.
\end{proposition}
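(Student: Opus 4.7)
The plan is to combine the partial sum property for cones (\cref{c:cone-many}) with the explicit two-ray characterization (\cref{l:2cones-rays}).

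First, since each $K_i = \RP u_i$ is a nonempty closed convex cone, \cref{c:cone-many} (equivalence of \cref{i:cone-many1} and \cref{i:cone-many1prime}) gives that $P_K = \sum_{i \in I}P_{K_i}$ together with $K = \sum_{i\in I}K_i$ being closed if and only if
\begin{equation}
      \rbr{\forall \rbr{i,j}\in I \times I, \, i \neq j}\rbr{\forall x \in \HH}\quad \scal{P_{K_{i}}x}{P_{K_{j}}x} = 0.
\end{equation}
Here there is a small wrinkle: the proposition asserts the equality $P_K = \sum_{i}P_{K_i}$ without a priori assuming closedness of $K$. But if $\sum_{i}P_{K_i} = P_C$ for some nonempty closed convex set $C$, then \cref{p:sumCi}\cref{i:set-sum2} (with all $\alpha_i = 1$) forces $\sum_{i}K_i$ to be closed and equal to $C$; hence the hypothesis $P_K = \sum_i P_{K_i}$ already implicitly carries the closedness of $K$, so \cref{c:cone-many} applies.

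Next, I apply \cref{l:2cones-rays} to each pair $(i,j)$ with $i \neq j$: the condition $\rbr{\forall x \in \HH}\,\scal{P_{K_i}x}{P_{K_j}x}=0$ is equivalent to $u_i \in \RMM u_j$ or $u_i \perp u_j$. Noting that $u_i \in \RMM u_j$ if and only if $u_j \in \RMM u_i$, I obtain the stated characterization. This establishes the equivalence.

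For the final ``in which case'' clause, I fix $i \in I$ and argue by contradiction. First, $i$ itself never belongs to $\menge{j \in I}{u_j \in \RMM u_i}$, because $u_i = \lambda u_i$ with $\lambda < 0$ is impossible for $u_i \neq 0$. Suppose now that two distinct indices $j, k \in I \smallsetminus \{i\}$ both satisfy $u_j, u_k \in \RMM u_i$, say $u_j = \alpha u_i$ and $u_k = \beta u_i$ with $\alpha,\beta \in \RMM$. Then $u_j = (\alpha/\beta)u_k$ with $\alpha/\beta \in \RPP$. However, the already-established condition requires either $u_j \perp u_k$ (which would give $0 = \scal{u_j}{u_k} = (\alpha/\beta)\norm{u_k}^2 > 0$, impossible) or $u_j \in \RMM u_k$ (which would force $\alpha/\beta < 0$, impossible). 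This contradiction shows $\card\menge{j \in I}{u_j \in \RMM u_i} \leq 1$. No step presents a real obstacle; the work has essentially been done in \cref{c:cone-many} and \cref{l:2cones-rays}, and the only subtlety is invoking \cref{p:sumCi}\cref{i:set-sum2} to ensure closedness of $K$ up front.
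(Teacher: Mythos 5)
Your proof is correct and follows essentially the same route as the paper's: \cref{c:cone-many} reduces the claim to the pairwise orthogonality condition on the projectors, \cref{l:2cones-rays} translates that condition into the stated one on the $u_{i}$ (using the symmetry $u_{i}\in\RMM u_{j}\iff u_{j}\in\RMM u_{i}$), and the cardinality bound is obtained by the same contradiction argument. Your additional remark invoking \cref{p:sumCi}\cref{i:set-sum2} to secure the closedness of $K$ is a harmless clarification of a point the paper leaves implicit.
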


\begin{proof}
      Assume first 
      that $P_{K} = \sum_{i \in I} P_{K_{i}}$.
      Then, \cref{c:cone-many} ensures that,
      \begin{equation}
            \label{eq:ensures}
            \text{
      for every $\rbr{i,j} \in I \times I$
      with $i \neq j$, we have $\rbr{\forall x \in \HH } \, 
\scal{P_{K_{i}}x}{P_{K_{j}}x} = 0$.}
      \end{equation}
      Hence, 
      for every $\rbr{i,j} \in I \times I$
      with $i \neq j$, because $ K_{i} = \RP u_{i}$
      and $K_{j} = \RP u_{j}$,
      we derive from \cref{l:2cones-rays}
      that $u_{j} \in \RMM u_{i}$ 
      or $u_{i} \perp u_{j}$.
      Now fix $i \in I$, 
      and let us show that 
      $\card \menge{j \in I}{u_{j} \in \RMM u_{i}} \leq 1$
      by contradiction: 
      suppose that there exists $\rbr{j,k} \in I \times I$
      such that $j \neq k$ and 
    $\left\{ u_{j},u_{k} \right\} \subset \RMM u_{i}$.
    Then,  clearly $u_{j} \in \RPP u_{k}$.
      On the other hand,
      \cref{eq:ensures} and \cref{l:2cones-rays}
      imply that $u_{j} \perp u_{k}$
      or $u_{j} \in \RMM u_{k}$.
      Altogether, we reach
      a contradiction.
      To sum up,  $\card\menge{j \in I}{ u_{j } \in \RMM u_{i}} 
      \leq 1$.
      To see the converse,
      combine \cref{l:2cones-rays} 
      and \cref{c:cone-many}.
\end{proof}


\section{The one-dimensional  case}
\label{sect:R}

In this section, we assume that 
\begin{empheq}[box = \mybluebox]{equation}
\HH = \RR 
\end{empheq}
and that 
\begin{empheq}[box = \mybluebox]{equation}
\text{$C$ and $D$
are nonempty closed convex subsets,
i.e., closed intervals, 
of $\HH$.}
\end{empheq}
The goal
of this section is to
describe all pairs
 $\rb{C,D}$
 on the real line 
such that $P_{C} + P_{D} = P_{C+D}.$ 
We begin with 
a simple observation.

\begin{remark}
	If $C = \left\{0\right\}$ or $D = \left\{0\right\}$,
	then clearly $P_{C} + P_{D} = P_{C+D}$.
	Thus, we henceforth assume in this section 
	that 
	 \begin{empheq}[box = \mybluebox]{equation}
	 \label{e:CD-nonzero}
	 \text{$C \neq \left\{0\right\}$ and $D \neq \left\{0\right\}$.
	}
	 \end{empheq}
\end{remark}

Here
is a sufficient condition
under which $P_{C} + P_{D} = P_{C+D}.$

\begin{proposition}
	\label{p:CD-R-intersect}
	Suppose that 
	$C \cap D = \left\{0\right\}.$
	Then the following hold:
	\begin{enumerate}
		\item\label{i:CD-inter-1} Exactly
		one of the following
		cases occurs: 
		\begin{enumerate}
			\item\label{i:CD-R-intersect1} $C \subseteq \RM$. 
			Then $D \subseteq \RP$
			and  $\max{C} = \min{D} = 0.$
			\item\label{i:CD-R-intersect2} 
			\begin{math}
				C \cap \RPP \neq \varnothing.
			\end{math}
			Then $C \subseteq \RP$, $D \subseteq \RM$, 
			and 
			$\min{C} = \max {D} = 0.$
			
		\end{enumerate}
	\item \label{i:CD-inter-2} 
	$C+D$ is closed and 
	 $P_{C} + P_{D} = P_{C+D}.$
	\end{enumerate}
\end{proposition}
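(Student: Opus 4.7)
The plan is first to perform the sign analysis in Part (i) using only that closed convex subsets of $\RR$ are closed intervals and that $0\in C\cap D$, then to deduce Part (ii) by verifying the orthogonality-of-projections condition $\scal{P_{C}x}{P_{D}x}=0$ of \cref{t:sum-pr} with $\gamma=0$.

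For Part (i), I split on whether $C\cap\RPP$ is empty. If $C\cap\RPP\neq\varnothing$, a point $c\in C\cap\RPP$ together with $0\in C$ and the interval structure of $C$ gives $[0,c]\subseteq C$; if $D$ also contained a strictly positive point, the analogous interval in $D$ would overlap $C$ in more than $\{0\}$, a contradiction. So $D\subseteq\RM$ with $\max D=0$. A symmetric argument on the negative side, combined with $D\neq\{0\}$ supplied by \cref{e:CD-nonzero}, then forces $C\subseteq\RP$ with $\min C=0$, which is case~(b). In the complementary branch $C\cap\RPP=\varnothing$, we have $C\subseteq\RM$ with $\max C=0$, and since $C\neq\{0\}$ by \cref{e:CD-nonzero}, $C$ contains a strictly negative point; running the same back-and-forth with the roles reversed pins $D$ inside $\RP$ with $\min D=0$, giving case~(a). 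The two cases are clearly mutually exclusive.

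For Part (ii), I handle case~(a); case~(b) is symmetric. Since $0\in C\subseteq\RM$, every $x\in\RP$ satisfies $P_{C}x=0$; since $0\in D\subseteq\RP$, every $x\in\RM$ satisfies $P_{D}x=0$. As $\RR=\RP\cup\RM$, one of $P_{C}x$ and $P_{D}x$ vanishes for each $x\in\RR$, so $\scal{P_{C}x}{P_{D}x}=0$ identically. Applying \cref{t:sum-pr} with constant $\gamma=0$ then yields both the closedness of $C+D$ and the identity $P_{C}+P_{D}=P_{C+D}$.

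The only subtlety I anticipate is the bookkeeping in Part (i): ruling out the degenerate possibility that one of $C$ or $D$ collapses to $\{0\}$ requires invoking both halves of the hypothesis $C\neq\{0\}\neq D$ from \cref{e:CD-nonzero} at the right moment in each branch. Once the two sets are placed on opposite closed half-lines, Part (ii) is immediate.
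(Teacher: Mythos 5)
Your proof is correct and follows essentially the same route as the paper: the same convexity/interval overlap argument (using the standing assumption $C\neq\{0\}\neq D$) to place $C$ and $D$ on opposite closed half-lines in Part (i), and then the observation that $\scal{P_{C}x}{P_{D}x}\equiv 0$ fed into \cref{t:sum-pr} for Part (ii). The only cosmetic difference is that you argue directly that one of the two projections vanishes at each point, where the paper cites the explicit formula for projecting onto an interval.
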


\begin{proof}
	\cref{i:CD-R-intersect1}: Since $C \subseteq \RM$ 
	and $0 \in C$, it follows that $\max{C} = 0.$ 
	Let us now verify that 
	$D \subseteq \RP.$ 
	Assume to the contrary that 
	there exists $\xi \in D \cap \RMM.$ 
	Then,
	on the one hand, 
	by the convexity of $D$
	and the fact that $0 \in D$,
	it follows that 
	$\sqbc{\xi ,0} \subseteq D$.
	On the other hand, 
	because $\left\{0\right\} \neq C \subseteq \RM$
	and $C$ is convex,
	there exists $\eta \in \RMM$
	satisfying $\sqbc{\eta , 0} \subseteq C$. 
	Altogether, $\sqbc{ \max\left\{ \xi, \eta \right\} , 0 } \subseteq C \cap D$,
	and we reach a contradiction. 
	Thus, $D \subseteq \RP$, and since $0 \in D$,
	we conclude that $\min{D} = 0$, as desired.
	
	\cref{i:CD-R-intersect2}: 
	We first argue by contradiction that  $D \subseteq \RM$.
	Towards this goal, assume that 
	there exists $\eta \in D \cap \RPP,$
	and take $\xi \in C \cap \RPP.$ 
	Since $0 \in C$ and $\xi \in C$,
	the convexity of $C$ yields 
	$\sqbc{0,\xi} \subseteq C$,
	and likewise, $\sqbc{0,\eta }\subseteq D$.
	Thus, $\sqbc{0,\min\left\{ \xi,\eta \right\}} \subseteq C \cap D$,
	which contradicts our assumption.
	Hence, $D \subseteq \RM$. 
	Consequently, 
	by interchanging the roles of $C$
	and $D$ in \cref{i:CD-R-intersect1}, 
	we obtain the conclusion.
	
	\cref{i:CD-inter-2}: 
	In the light of \cref{i:CD-inter-1},
	we may and do assume that 
	$C \subseteq \RM.$
	Then \cref{i:CD-R-intersect1}
	implies that $\max{C} = \min{D} = 0$,
	and 
	thus \cite[Example~24.34(i)]{bauschke2017convex}
	yields  	
			\begin{math}
				\rb{\forall \xi \in \HH}
                        \,
				\scal{P_{C}\xi }{ P_{D}\xi }
			 = 0.
			\end{math}
	Hence, according to \cref{t:sum-pr},
	we conclude that 
	$C+D$ is closed and that 
	$P_{C} + P_{D} = P_{C+D}$.
\end{proof}

Here is a
direct consequence of \cref{p:CD-R-intersect}\cref{i:CD-inter-1}.

\begin{corollary}
	Suppose that $C \cap D \neq \varnothing.$
	Then 
		\begin{equation}
			C \cap D = \left\{0 \right\}
			\quad 
			\iff 
			\quad 
			CD \subseteq \RM,
		\end{equation}
	where $CD \coloneqq \menge{\xi \eta}{
	\xi \in C\text{~and~}\eta \in D}.$
\end{corollary}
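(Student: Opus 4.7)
The plan is to prove the two implications separately, leveraging the dichotomy established in \cref{p:CD-R-intersect}\cref{i:CD-inter-1} for the forward direction and an elementary sign argument for the reverse.

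For the forward implication ($\Rightarrow$), suppose $C \cap D = \left\{0\right\}$. I would invoke \cref{p:CD-R-intersect}\cref{i:CD-inter-1} to obtain that either (a) $C \subseteq \RM$ and $D \subseteq \RP$, or (b) $C \subseteq \RP$ and $D \subseteq \RM$. In either case, for every $\xi \in C$ and every $\eta \in D$, the product $\xi\eta$ is the product of a nonpositive and a nonnegative real number, hence $\xi \eta \in \RM$. Consequently, $CD \subseteq \RM$.

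For the reverse implication ($\Leftarrow$), suppose $CD \subseteq \RM$. Since $C \cap D \neq \varnothing$ by hypothesis, fix any $\xi \in C \cap D$. Then $\xi^{2} = \xi \cdot \xi \in CD \subseteq \RM$, so $\xi^{2} \leq 0$. Combined with $\xi^{2} \geq 0$, this forces $\xi = 0$. Hence $C \cap D \subseteq \left\{0\right\}$, and combined with the standing assumption $C \cap D \neq \varnothing$, we conclude $C \cap D = \left\{0\right\}$.

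No serious obstacle arises: the forward direction is an immediate reading of the sign information already extracted in the preceding proposition, and the reverse direction uses only the nonnegativity of squares in $\RR$ together with the presumed nonemptiness of $C \cap D$.
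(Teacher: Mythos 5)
Your proof is correct and matches the paper's intent: the paper states this corollary as a direct consequence of \cref{p:CD-R-intersect}\cref{i:CD-inter-1}, which is precisely your forward direction, and the reverse direction is the same trivial observation that any $\xi \in C \cap D$ gives $\xi^{2} \in CD \subseteq \RM$, forcing $\xi = 0$.
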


The next result
classifies
all pairs
$\rb{C,D}$
such that $P_{C} + P_{D} = P_{C+D}.$ 
Item \cref{i:CD-R-12} is a
partial converse of \cref{p:CD-R-intersect}.

\begin{theorem}[Dichotomy]\label{p:CD-R-1}
	Suppose that $P_{C} + P_{D} \in \prH.$
	Then exactly
	one of the following
	cases occurs: 
	\begin{enumerate}
		\item\label{i:CD-R-11} 
		$C$ and $D$ are singletons.
		\item \label{i:CD-R-12}
		Neither $C$ nor
		$D$ is a singleton and 
		$C \cap D =\left\{0\right\}.$
	\end{enumerate}
\end{theorem}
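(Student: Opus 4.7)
The plan is to split into three scenarios based on how many of $C, D$ are singletons: both, exactly one, or neither. The ``both'' scenario directly yields case~(i); the ``exactly one'' scenario will be ruled out; and the ``neither'' scenario will produce case~(ii). Mutual exclusivity of (i) and (ii) is immediate from their definitions.

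To rule out ``exactly one is a singleton'': by symmetry, suppose $C = \left\{c\right\}$ with $c \neq 0$ (recall $C \neq \left\{0\right\}$) and $D$ is not a singleton. Then $P_{C} + P_{D} = c + P_{D}$, so \cref{eg:uPC} forces $c \in \rbr{D - D}^{\perp}$. Because $D$ is a non-singleton closed interval in $\RR$, $D - D$ contains a nonzero real, whence $\rbr{D-D}^{\perp} = \left\{0\right\}$ and $c = 0$, a contradiction.

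Now assume that neither $C$ nor $D$ is a singleton. By \cref{t:sum-pr}, there exists $\gamma \in \RR$ such that $P_{C}\rbr{x}\, P_{D}\rbr{x} = \gamma$ for every $x \in \RR$. I would then establish two preliminary facts. First, $C \cap D$ contains at most one point: if $e_{1} < e_{2}$ both lay in $C \cap D$, convexity would give $\left[e_{1},e_{2}\right] \subseteq C \cap D$, whence $P_{C}\rbr{x}P_{D}\rbr{x} = x^{2}$ on this interval, contradicting the constancy of $\gamma$. Second, $C \cap D$ is nonempty: otherwise, WLOG $\sup C < \inf D$; setting $b \coloneqq \sup C$ and $e \coloneqq \inf D$, for any two distinct $x_{1}, x_{2} \in C$ we have $P_{C}\rbr{x_{i}} = x_{i}$ and $P_{D}\rbr{x_{i}} = e$, so $\gamma = x_{i} e$ forces $e = 0$; a symmetric argument with two distinct points in $D$ forces $b = 0$, contradicting $b < e$.

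It remains to show that the unique common point is $0$. Write $C \cap D = \left\{e\right\}$. The point $e$ must be an endpoint of each interval, since otherwise that interval would contain a whole neighborhood of $e$ and generate additional shared points. Moreover, $C$ and $D$ must extend to opposite sides of $e$: if they were both of the form $\left[\cdot, e\right]$, say, they would overlap on a nondegenerate segment containing $e$, again contradicting the ``at most one point'' claim. Hence, up to swapping $C$ and $D$, we may assume $C = \left[a,e\right]$ and $D = \left[e,d\right]$ with $a < e < d$ (allowing $a = -\infty$ or $d = +\infty$). Choosing any two distinct $x_{1} < x_{2}$ in $D \smallsetminus \left\{e\right\}$ gives $P_{C}\rbr{x_{i}} = e$ and $P_{D}\rbr{x_{i}} = x_{i}$, so $\gamma = e x_{1} = e x_{2}$, forcing $e = 0$. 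The only piece of delicate bookkeeping is the endpoint/side case analysis just sketched; the rest of the argument is a direct exploitation of the constant-inner-product criterion of \cref{t:sum-pr}.
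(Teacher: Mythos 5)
Your proof is correct, and it runs on the same engine as the paper's argument, namely the constancy of $x\mapsto P_{C}\rb{x}\,P_{D}\rb{x}$ furnished by \cref{t:sum-pr}; the differences are in how the sub-steps are discharged. You eliminate the mixed case (exactly one singleton) via \cref{eg:uPC} together with the observation that $\rb{D-D}^{\perp}=\left\{0\right\}$ for a non-singleton interval, whereas the paper shows directly that constancy of the product forces $D$ to be a singleton as soon as $C$ is. You prove $C\cap D\neq\varnothing$ by the elementary order dichotomy for disjoint closed intervals (after relabelling, $\sup C<\inf D$, both finite by closedness) instead of the paper's appeal to a separation theorem, though both arguments then exploit the same mechanism: points of one set all project to the nearest endpoint of the other, and constancy of $\gamma$ collapses two distinct points. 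You exclude a second common point by noting that $x^{2}$ cannot be constant on a nondegenerate interval, where the paper uses a convex-combination identity; and you obtain $e=0$ through the explicit endpoint/opposite-side analysis rather than via the composition formula $P_{C\cap D}=P_{D}\circ P_{C}$ from \cite[Proposition~24.47]{bauschke2017convex}. Your endpoint/side bookkeeping is complete: if $e$ were interior to either interval, or if both non-singleton intervals lay on the same side of $e$, then $C\cap D$ would contain a nondegenerate segment, contradicting the "at most one common point" step. The net effect is a more elementary, more explicitly one-dimensional proof; the paper's version leans on general Hilbert-space tools for the intermediate steps, but since the whole section is set in $\RR$ nothing is lost by your route.
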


\begin{proof}
	\cref{t:sum-pr} and
	our assumption 
	guarantee 
	the existence of $\gamma \in \RR$
	such that 
		\begin{equation}\label{e:CD-R-gamma}
			\rb{\forall \xi \in \HH}
			\quad 
			\rb{P_{C}\xi}\rb{P_{D}\xi} =  \scal{ P_{C} \xi }{ P_{D} \xi } =
			\gamma.
		\end{equation}
	
	\cref{i:CD-R-11}: 
	Suppose that $C  = \left\{\omega  \right\}$,
	where $\omega  \neq 0 $
	due to \cref{e:CD-nonzero}. 
	Then, for every $\xi \in D$ and every $\eta \in D$,
	since $P_{C}\xi = P_{C}\eta = \omega$, 
	\cref{e:CD-R-gamma} implies that 
	\begin{math}
		\omega \xi = \omega P_{D}\xi = \omega P_{D}\eta = \omega \eta,
	\end{math}
	and because $\omega \neq 0$, 
	it follows that $\xi = \eta$.
	Therefore, $D$ is a singleton, as required.
	
	\cref{i:CD-R-12}: 
	Suppose that $C$
	is not a singleton.
	Let us first show that 
	$D$ is not a singleton
	by proving the contrapositive.
	If $D$ is a singleton,
	then interchanging 
	$C$
	and $D$ in \cref{i:CD-R-11},
	we see that $C$ is a singleton. 
	Next, we shall establish that 
		$C \cap D \neq \varnothing$
	by contradiction.
	Assume that 
	$C \cap D = \varnothing.$
	Then, owing to
	\cite[Theorem~3.53]{bauschke2017convex},
	we obtain $\mu \in \HH \smallsetminus \left\{0\right\}$
	and $\beta \in \RR$
	such that 
	\begin{equation}\label{e:R-CD-separate}
		\rb{\forall \xi \in C}\rb{\forall \eta \in D}
		\quad 
		\xi \mu \leq \beta \leq  \eta \mu .
	\end{equation}
	Without loss of generality, 
	we may and do assume that 
	\begin{equation}\label{e:mu-positive}
		\mu >0.
	\end{equation}
	Then \cref{e:R-CD-separate} asserts
	that $C$ is bounded above
	and $D$ is bounded below,
	and because they are closed,
	we infer that $\sup{C} = \max{C}$
	and $\inf{D} = \min{D}.$
	Let us consider the following
	conceivable cases.
	
	(a) $\max{C} = 0$:  Then $\min{D} \neq 0$
	(otherwise $ 0 \in C \cap D$, which is absurd).
	Because $C$ is not a singleton,
	we can find $\xi_{1} \in C$ and $\xi_{2} \in C$
	such that  $\xi_{1} \neq \xi_{2}$.
	In turn, due to \cref{e:R-CD-separate}
	and \cref{e:mu-positive},
	\begin{math}
		\rb{\forall i \in \left\{1,2\right\}}
            \,
		\xi_{i} \leq \beta / \mu \leq \min{D},
	\end{math}
	from which and \cite[Example~24.34(i)]{bauschke2017convex}
	we deduce that 
	$P_{D}\xi_{1} = P_{D}\xi_{2} = \min{D}$.
	Consequently, 
	since $\left\{ \xi_{1}, \xi_{2} \right\} \subseteq C$, 
	\cref{e:CD-R-gamma}
	implies that $\xi_{1} \min{D} 
	= \scal{ P_{C}\xi_{1} }{ P_{D}\xi_{1} } 
	=  \scal{ P_{C}\xi_{2} }{ P_{D}\xi_{2} }
	= \xi_{2} \min{D}$,
	and since $\min{D} \neq 0$,
	it follows that $\xi_{1} = \xi_{2}$,
	which is impossible.
	
	(b) $\max{C} \neq 0$: Since $D$
	is not a singleton, there are $\eta_{1} \in D$
	and $\eta_{2} \in D$ such that $\eta_{1} \neq\eta_{2}.$
	In turn, we  infer from \cref{e:R-CD-separate}\&\cref{e:mu-positive}
	 that
       $\rb{\forall i \in \left\{1,2\right\}}\, \max{C} \leq \beta/\mu 
	\leq \eta_{i}, $
	and therefore \cite[Example~24.34(i)]{bauschke2017convex}
	yields $P_{C}\eta_{1} = P_{C}\eta_{2} = \max{C}.$
	Thus, by \cref{e:CD-R-gamma}
	and the fact that 
	$\left\{ \eta_{1}, \eta_{2} \right\} \subseteq D$, , 
	we see that $  \rb{\max{C} }\eta_{1}
	=
	\scal{ P_{C}\eta_{1} }{ P_{D}\eta_{1} } 
	=  \scal{ P_{C}\eta_{2} }{ P_{D}\eta_{2} }
	 =  \rb{\max{C} }\eta_{2}$.
	Consequently, since $\max{C} \neq 0$,
	it follows that $\eta_{1} = \eta_{2}$,
	which is absurd.
	
	To summarize, we have shown that 
		\begin{equation}
			C \cap D \neq \varnothing.
		\end{equation}
	Let us next verify that 
	$C \cap D$ is a singleton.
	To this end, 
	take $\xi\in C \cap D$
	and  $\eta \in C \cap D$,
	and let $\varepsilon \in \sqbo{0,1}.$
	On the one hand,
	by \cref{e:CD-R-gamma},
	we see that 
	\begin{equation}\label{e:CD-singleton}
		\rb{\forall \xi \in C \cap D}
		\quad 
		 \xi^{2} = 
		\gamma.
	\end{equation}
	On the other hand, 
	since $C\cap D$
	is convex
	and $\varepsilon \in \sqbo{0,1}$,
	$\rb{1-\varepsilon}\xi + \varepsilon \eta \in C\cap D$.
	Altogether, 
	\begin{math}
		\xi^{2} = \sqbc{\rb{1-\varepsilon}\xi + \varepsilon \eta}^{2} 
	\end{math}
	or, equivalently, 
	\begin{math}
		\varepsilon \rb{\xi - \eta}\sqbc{ \rb{2-\varepsilon}\xi + \varepsilon \eta }
		= \xi^{2} - \sqbc{\rb{1-\varepsilon}\xi + \varepsilon \eta}^{2} 
	=0.
	\end{math}
	Interchanging $\xi$ and $\eta$
	yields
	$\varepsilon \rb{\eta - \xi}\sqbc{ \rb{2-\varepsilon}\eta + \varepsilon \xi }
	=0$,
	and upon adding these equalities,
	we obtain 
	\begin{math}
		\varepsilon \rb{\xi -\eta}\rb{2\rb{1-\varepsilon}\xi - 2\rb{1-\varepsilon}\eta} =0,
	\end{math}
	i.e.,
	\begin{math}
		2\varepsilon \rb{1-\varepsilon}\rb{\xi -\eta}^{2} = 0.
	\end{math}
	Therefore, $\xi =\eta$ and $C \cap D$ is thus a singleton,
	say 
		\begin{equation}
			C \cap D =\left\{ \omega \right\}.
		\end{equation}
	It remains to show that 
	$\omega = 0.$ 
	Since $C$ is not a singleton,
	there exists $\xi \in C \smallsetminus\left\{ \omega \right\}$.
	In turn, 
	because $C\cap D = \left\{ \omega \right\}$, 
	we derive from  
	\cite[Proposition~24.47]{bauschke2017convex}
	(applied to $\rb{\varOmega, \phi} = \rb{D,\iota_{C}}$)
	and \cref{e:CD-R-gamma}\&\cref{e:CD-singleton} that
	 $\xi \omega = \scal{ P_{C}\xi }{ P_{C\cap D}\xi }
	 = \scal{ P_{C}\xi }{ P_{D}\rb{P_{C}\xi} }
	 = \scal{ P_{C}\xi }{ P_{D}\xi  }
	 = \gamma
	 = \omega^{2}.$
	 Thus, $\omega \rb{\xi - \omega} = 0$,
	 and since $\xi \neq \omega$,
	 it follows that $\omega = 0$,
	 which completes the proof.
\end{proof}


\section{On a result by Halmos}
\label{sect:more}

In this section,
we revisit
and extend
the classical result 
\cite[Theorem~2, p.~46]{Halmos-spectral-1951}
to the nonlinear case.

\begin{proposition}\label{c:set-cone}
	Let $C$ be
	a nonempty closed convex subset of $\HH$,
	and let $K$ be a
	nonempty closed convex cone in $\HH$.
	Suppose that 
	there exits a closed convex set $D$ such that
	$P_{C} + P_{K} = P_{D}$.
	Then $C \subseteq \pc{K}.$
\end{proposition}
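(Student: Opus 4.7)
My plan is to extract a pointwise orthogonality condition from the hypothesis $P_{C}+P_{K}=P_{D}$, evaluate it at two carefully chosen points, and then conclude by Moreau's conical decomposition of $K$.

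The first step is to apply \cref{t:sum-pr} to the pair $\rb{C,K}$: since $P_{C}+P_{K} = P_{D}$ is a projector by assumption, the theorem yields a constant $\gamma \in \RR$ with
\begin{equation}
\rbr{\forall x \in \HH}\quad \scal{P_{C}x}{P_{K}x} = \gamma.
\end{equation}
Next, I identify the constant $\gamma$ by specializing at $x = 0$. Because $K$ is a cone, $0 \in K$ and hence $P_{K}0 = 0$; therefore $\gamma = \scal{P_{C}0}{P_{K}0} = \scal{P_{C}0}{0} = 0$, so
\begin{equation}\label{eq:plan-ortho}
\rbr{\forall x \in \HH}\quad \scal{P_{C}x}{P_{K}x} = 0.
\end{equation}

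The heart of the argument is then to specialize \cref{eq:plan-ortho} at $x = c$ for an arbitrary $c \in C$. Since $c \in C$, we have $P_{C}c = c$, and thus $\scal{c}{P_{K}c} = 0$. Now I invoke Moreau's conical decomposition for the cone $K$ (see, e.g., \cite[Theorem~6.30]{bauschke2017convex}), which gives
\begin{equation}
c = P_{K}c + P_{\pc{K}}c
\quad \text{and} \quad
\scal{P_{K}c}{P_{\pc{K}}c} = 0.
\end{equation}
Taking the inner product with $P_{K}c$ yields $\scal{c}{P_{K}c} = \norm{P_{K}c}^{2}$; combining with $\scal{c}{P_{K}c} = 0$ forces $P_{K}c = 0$, and thus $c = P_{\pc{K}}c \in \pc{K}$. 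Since $c \in C$ was arbitrary, this proves $C \subseteq \pc{K}$.

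No step here is delicate: once \cref{t:sum-pr} provides the scalar product identity, the cone structure of $K$ (used both for $P_{K}0 = 0$ and for Moreau's decomposition) makes the conclusion essentially immediate. The only observation worth flagging is that one does not need to identify $D$ explicitly (it is in fact $C+K$ by \cref{p:sumCi}\cref{i:set-sum2}); the constant-inner-product consequence of \cref{t:sum-pr} suffices.
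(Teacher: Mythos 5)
Your proof is correct and follows essentially the same route as the paper: extract the constant $\scal{P_{C}x}{P_{K}x}=\gamma$ from \cref{t:sum-pr}, identify $\gamma=0$ via $P_{K}0=0$, and then force $P_{K}c=0$ for each $c\in C$. The only cosmetic difference is that you rederive the identity $\scal{c}{P_{K}c}=\norm{P_{K}c}^{2}$ from Moreau's conical decomposition, whereas the paper simply cites \cref{l:cone-identities}\cref{i:cone-identities1}, which encapsulates exactly that computation.
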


\begin{proof}
	Our assumption and \cref{t:sum-pr}
	guarantee the existence of $\gamma \in \RR$
      such that $\rb{\forall x \in \HH}\,\scal{P_{C}x}{P_{K}x} = \gamma$.
	However, because $P_{K}0 = 0$,
	we infer that $\gamma = 0$.
	Therefore,
	for every $x \in C$, 
	it follows from 
	\cref{l:cone-identities}\cref{i:cone-identities1}
	that 
	\begin{math}
	\norm{P_{K}x}^{2} =  \scal{x}{P_{K}x}
	=\scal{P_{C}x}{P_{K}x} = \gamma = 0; 
	\end{math}
	hence, $P_{K}x = 0$, and
	\cite[Theorem~6.30(i)]{bauschke2017convex}
	thus implies that $x = P_{\pc{K}}x \in \pc{K}.$
	Consequently, $C \subseteq \pc{K}$,
	as claimed.
\end{proof}

The following example shows that
the conclusion of \cref{c:set-cone}
is merely a necessary condition 
for $P_{C}  + P_{K} = P_{C+K}$
even when $C$ is a cone.

\begin{example}\label{eg:counter-cone-set2}
	Suppose that $\HH = \RR^{2}$.
	Set $u \coloneqq \rb{1,0}$
	and $v \coloneqq \rb{-1,1}$.
	Moreover, 
	set $C\coloneqq \RP v $
	and $K \coloneqq \RP u.$
	Then,
	because $\scal{u}{v} = -1 < 0$, 
	we see that 
	$C\subseteq \pc{K}$.
	Furthermore, $C+K$
	is a closed cone
	by \cite[Proposition~6.8]{bauschke2017convex}.
	Now set $x \coloneqq \rb{1,1} = v + 2u \in C + K$. 
	According to 
	\cite[Example~29.31]{bauschke2017convex},
	$P_{C}x + P_{K}x = \rb{0,0} + \rb{1,0} = \rb{1,0}
	\neq x = P_{C+K}x.$
	Therefore, 
	 $P_{C} + P_{K} \neq P_{C+K}.$
\end{example}

We now extend the classical 
\cite[Theorem~2, p.~46]{Halmos-spectral-1951}
(in the case of two subspaces)
by replacing
one subspace by a general convex set.

\begin{corollary}\label{c:subspace-sum}
	Let $C$ 
	be a nonempty closed convex subset of $\HH$,
	and let $V$ be a closed
	linear subspace of $\HH$.
	Then the following are equivalent:
	\begin{enumerate}
		\item\label{i:subspace-sum1} There exists
		a closed convex set 
		$D$ such that
		$P_{C} + P_{V} = P_{D}$.
		\item\label{i:subspace-sum2} $C \perp V.$
	\end{enumerate}
	Moreover, if \cref{i:subspace-sum1} and \cref{i:subspace-sum2} hold,
	then $D=C+V$
	and $P_{C} + P_{V} = P_{C+V}$.
\end{corollary}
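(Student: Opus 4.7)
The plan is to obtain this as a straightforward two-way deduction: the forward direction will follow from \cref{c:set-cone} (specialized to a subspace, where the polar cone is the orthogonal complement), while the reverse direction and the ``moreover'' part will follow from \cref{c:CperpD}.

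For the implication ``\cref{i:subspace-sum1}\ensuremath{\implies}\cref{i:subspace-sum2}'', I would observe that any closed linear subspace $V$ of $\HH$ is, in particular, a nonempty closed convex cone, and that its polar cone coincides with its orthogonal complement, i.e., $\pc{V} = V^{\perp}$. Under the hypothesis that $P_{C} + P_{V} = P_{D}$ for some closed convex set $D$, applying \cref{c:set-cone} with $K = V$ yields $C \subseteq \pc{V} = V^{\perp}$. Since $V$ is a linear subspace, this is precisely the statement $C \perp V$.

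For ``\cref{i:subspace-sum2}\ensuremath{\implies}\cref{i:subspace-sum1}'' together with the ``moreover'' clause, I would invoke \cref{c:CperpD}: since $C \perp V$ and both $C$ and $V$ are nonempty closed convex subsets of $\HH$, that corollary guarantees that $C + V$ is a nonempty closed convex set and that $P_{C} + P_{V} = P_{C+V}$. Taking $D \coloneqq C+V$ gives \cref{i:subspace-sum1}. Finally, to see that the set $D$ in \cref{i:subspace-sum1} must equal $C+V$, I would note that a closed convex set is uniquely determined by its associated projector: from $P_{D} = P_{C} + P_{V} = P_{C+V}$ one obtains $D = \ran P_{D} = \ran P_{C+V} = C+V$.

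No step here presents any real obstacle, since all the heavy lifting, namely \cref{c:set-cone} and \cref{c:CperpD}, has already been carried out earlier in the paper; the only subtlety is the remark that for a linear subspace $V$, the polar cone $\pc{V}$ coincides with $V^{\perp}$, which makes the passage from \cref{c:set-cone} to the orthogonality condition $C \perp V$ immediate.
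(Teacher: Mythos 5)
Your proof is correct and follows essentially the same route as the paper: \cref{c:set-cone} together with $\pc{V}=V^{\perp}$ for the forward implication, and \cref{c:CperpD} for the converse. The only cosmetic difference is in the ``moreover'' clause, where the paper reads off $D=C+V$ and $P_{C}+P_{V}=P_{C+V}$ directly from \cref{t:sum-pr} in the forward direction, whereas you obtain it from \cref{c:CperpD} plus the (valid) observation that $D=\ran P_{D}=\ran P_{C+V}=C+V$.
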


\begin{proof}
	``\cref{i:subspace-sum1}\ensuremath{\implies}\cref{i:subspace-sum2}'':
	It follows from \cref{t:sum-pr} that $D=C+V$ 
	and that  $P_{C} + P_{V} = P_{C+V}$. 
	Now, by \cref{c:set-cone} and 
	\cite[Proposition~6.23]{bauschke2017convex}, 
	we obtain
	$C \subseteq \pc{V} = V^{\perp}$.
	
	``\cref{i:subspace-sum2}\ensuremath{\implies}\cref{i:subspace-sum1}'':
	Immediate from \cref{c:CperpD}.
\end{proof}

However, 
replacing 
the subspace $V$ 
in \cref{c:subspace-sum}
by cone might not work. 
The following
simple example shows
that 
the implication 
``\cref{i:subspace-sum1}\ensuremath{\implies}\cref{i:subspace-sum2}''
of \cref{c:subspace-sum} may fail
even when $C$ and $V$ are cones.

\begin{example}\label{eg:two-rays}
	Consider the setting of 
	\cref{eg:two-rays2}.
	We have seen that 
	$P_{U} + P_{V} = P_{\RR w}$. 
	Yet, $U$ is not perpendicular to $V$.
	In fact, $\lspan U = \lspan V = \RR w$.
\end{example}

Combining \cref{c:cone-many},
\cref{t:two-cones-pr},
and \cref{c:subspace-sum},
we obtain the
following well-known result;
see \cite[Theorem~2, p.~46]{Halmos-spectral-1951}.

\begin{corollary}\label{c:many-subspaces}
	Let $\fa{V_{i}}{i \in I}$
	be a finite 
	family of closed
	linear subspaces of $\HH$. 
	Then $\sum_{ i \in I} P_{V_{i}}$
	is a projector associated
	with  
	a closed linear subspace
	if and only if,
	for every $\rb{i,j} \in I \times I$
	with $i \neq j$,
	we have $V_{i} \perp V_{j}$.
\end{corollary}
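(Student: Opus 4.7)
The plan is to prove the equivalence in both directions by chaining together the results already established in the paper, noting that closed linear subspaces are in particular closed convex cones (so the cone machinery applies) and that the additional linear structure lets us invoke \cref{c:subspace-sum}.

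For the forward implication, I would assume that $\sum_{i\in I} P_{V_i}$ is a projector onto a closed linear subspace (hence, a fortiori, onto a closed convex cone). Since each $V_i$ is a nonempty closed convex cone, \cref{c:cone-many} applies and, via the partial sum property, delivers that for every pair $(i,j) \in I\times I$ with $i\neq j$, the partial sum $P_{V_i} + P_{V_j}$ is itself a projector onto a closed convex cone. At this point, since $V_j$ is a \emph{closed linear subspace} (not merely a cone), I would invoke \cref{c:subspace-sum} with $C = V_i$ and $V = V_j$: the existence of a closed convex set $D$ with $P_{V_i} + P_{V_j} = P_D$ forces $V_i \perp V_j$. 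Repeating over all such pairs yields the pairwise orthogonality.

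For the backward implication, I would assume $V_i \perp V_j$ whenever $i\neq j$. Then for every $x\in \HH$, since $P_{V_i}x \in V_i$ and $P_{V_j}x \in V_j$, one has $\scal{P_{V_i}x}{P_{V_j}x}=0$. Hence the hypotheses of \cref{c:induct-many} are satisfied with constants $\gamma_{i,j}=0$, and I conclude that $\sum_{i\in I} V_i$ is closed convex and $\sum_{i\in I} P_{V_i} = P_{\sum_{i \in I} V_i}$. To finish, I need only observe that a finite sum of linear subspaces is itself a linear subspace (it is automatically closed under scalar multiplication and, being convex and containing $0$, closed under addition), so the projector is indeed onto a closed linear subspace.

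There is essentially no hard step here: the entire argument is a direct assembly of \cref{c:cone-many}, \cref{c:subspace-sum}, and \cref{c:induct-many}. The only points requiring a moment of care are (i) checking in the forward direction that one is truly entitled to apply \cref{c:subspace-sum} to each pair (which requires that the pairwise sum be a projector onto a \emph{closed convex set}, provided precisely by the partial sum property in \cref{c:cone-many}\cref{i:cone-many4a}), and (ii) verifying in the backward direction that the resulting sum set, guaranteed by \cref{c:induct-many} to be closed and convex, is in fact a linear subspace rather than merely a convex cone.
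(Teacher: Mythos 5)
Your proposal is correct and follows essentially the same route the paper indicates (it explicitly says the corollary is obtained by combining \cref{c:cone-many}, \cref{t:two-cones-pr}, and \cref{c:subspace-sum}): the forward direction via the partial sum property of \cref{c:cone-many} applied to two-element subsets plus \cref{c:subspace-sum}, and the backward direction via pairwise orthogonality of the projections, which is what \cref{c:cone-many} (proved through \cref{c:induct-many}, as you use directly) needs, together with the observation that a finite sum of linear subspaces is a subspace. Your two points of care are exactly the right ones, so no gap remains.
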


\paragraph{Acknowledgments}{   
	The authors thank two referees for their constructive
	comments. We also
	thank Professors Rebecca Tyson
	and Chris Cosner
	for helpful comments on \cref{rem:R2}.
      We are grateful to
      Professor
      Patrick Combettes
      for bringing our attention
    to the case of convex averages.
	HHB and XW were partially 
	supported by NSERC Discovery Grants;
	MNB was partially supported 
	by a Mitacs Globalink Graduate Fellowship Award.
      Most parts of this work
      were done when MNB was
      a graduate student at
      the University of British Columbia, 
    Okanagan campus.
}




\end{document}